\def \endproof {\quad \hfill  \rule{2mm}{2mm} \par\medskip}
\DeclareMathSymbol{\subsetneqq}{\mathbin}{AMSb}{36}
\newcommand{\lec}{{\, \lesssim \, }}
\newcommand{\LL}{{\mathcal L}(\cdot)}
\newcommand{\R}{\mathbb{R}}
\newcommand{\N}{\mathbb{N}}
\newcommand{\p}{\partial}
\newcommand{\De}{\Delta}
\newcommand{\dint}{\displaystyle\int}
\def \endproof {\quad \hfill  \rule{2mm}{2mm} \par\medskip}
\newtheorem{th1}{{\bf Theorem}}[section]
\newtheorem{thm}[th1]{{\bf Theorem}}
\newtheorem{lem}[th1]{{\bf Lemma}}
\newtheorem{prop}[th1]{{\bf Proposition}}
\newtheorem{rem}[th1]{\bf Remark}
\newtheorem{defi}[th1]{\bf Definition}
\title{Strichartz type estimates and  the well posedness of an energy critical 2D wave equation in a bounded domain}
\author{S. Ibrahim}
\address{Department of Mathematics and Statistics,\\University of Victoria\\
 PO Box 3060 STN CSC\\   Victoria, BC, V8P 5C3\\ Canada}
\email{ibrahim@math.uvic.ca} \urladdr{
http://www.math.uvic.ca/~ibrahim/}
\thanks{2000 Mathematics Subject Classification : 35Q55, 35L25}
\thanks{S. I. is partially supported by NSERC\# 371637-2009 grant and a start up fund from University
of Victoria}
\thanks{R. J is partially supported by the {\sf Laboratory of PDE and
applications}, Tunisia}
\author{R. Jrad}
\email{rym.jrad@gmail.com}
\address{Laboratoire d'\'equations aux d\'eriv\'ees
partielles et applications\\Facult\'e des Sciences de Tunis,
D\'epartement de Math\'ematiques\\ Campus universitaire 1060, Tunis,
Tunisia.} \email{} \email{}
\thanks{}
\title{Strichartz type estimates and  the well posedness of an energy critical 2D wave equation in a bounded domain}
\date{\today}
\begin{document}

\begin{abstract}
We study the well-posedness of the Cauchy problem with
Dirichlet or Neumann boundary conditions associated to an
$H^1$-critical semilinear wave equation on a smooth bounded
domain $\Omega\subset\R^2$. First, we prove an appropriate Strichartz type
estimate using the $L^q$ spectral projector estimates of the Laplace
operator. Our proof follows Burq-Lebeau-Planchon \cite{BLP}. Then, we show the global well-posedness when the energy is below or at the
threshold given by the sharp Moser-Trudinger inequality. Finally, in the {\it supercritical} case, we prove an instability result using the finite speed of propagation
and a quantitative study of the associated ODE with oscillatory data.
\end{abstract}

\maketitle

\tableofcontents \vspace{ 1\baselineskip}
\renewcommand{\theequation}{\thesection.\arabic{equation}}
\newpage
\section{Introduction}
Recall the following semi-linear wave equation
\begin{equation}\label{M}
                         \begin{array}{rcl}
                         (\p_t^2-\triangle)u+f(u)=0\quad\mbox{in }\mathbb{R}_t\times{\Omega_x},\;\\\\
                         \small{u(0,x)}\small{=}\small{u_{0}(x)},\;\small{{\p}_{t}u(0,x)}\small{=}\small{u_{1}(x)},
                         \end{array}
\end{equation}
where $\Omega\subset\R^d$ ($d\geq2$) is a smooth bounded domain, $\triangle$ denotes the Laplace-Beltrami operator acting on the space variable $x$, and the nonlinearity $f$ is an odd function satisfying $f(0)=0$ and $uf(u)\geq0$. The unknown $u=u(t,x)$ is a real-valued function. Note that the above assumptions on $f$ include the massive case, namely the Klein-Gordon equation.\\
The most studied nonlinear model is when $f(u)=|u|^{p-1}u$, with $p>1$. In the case of the whole space $\Omega=\R^d$ and $d\geq3$, there is a large literature on the local and global solvability of \eqref{M} in the scale of the Sobolev spaces $H^s$ i.e. the initial data $(u_0,u_1)\in H^s\times H^{s-1}$. Among others, we refer the interested readers to, \cite{g-s-v, g-v,MG,Kap1,Kap2,MN.TO 1,MN.TO 2,Shat-Stru1,Shat-Stru2,Stru}.\\
For the global solvability in the energy space $(u_0,u_1)\in H^1\times
L^2$, there are mainly three cases. In the subscritical case
where $p<p_c=1+\frac 4 {d-2}$, Ginibre and Velo \cite{g-v} have
shown that  problem \eqref{M} has a unique solution in the space
$C(\R,H^1(\R^d))\cap C^1(\R,L^2(\R^d))$. In the critical case,
$p=p_c$, the first global well-posedness result was obtained by Struwe in the radial case
\cite{Stru}. Then Grillakis in \cite{MG} established the existence
of global smooth solutions for smooth data when $d=3$. For higher
dimensions, Shatah-Struwe \cite{Shat-Stru1,Shat-Stru2} proved the
global solvability for data in the energy space. The quintic Klein-Gordon equation in 3D was globally solved by Kapitanski \cite{Ka}. In the
supercritical case, $p>p_c$, the local well-posedness was recently
solved by Kenig-Merle \cite{KM} but for initial data in the homogeneous Sobolev spaces
${\dot{H}^{s_p}\times\dot{H}^{s_p-1}}$ with $1<s_p<3/2$. In the energy
space this is still an open problem except for some partial results about some kind of ``illposedness". See
\cite{Leb1,Leb2,chr-coll-tao} for more details. \\
If $\Omega$ is the complement of a strictly convex, smooth and
compact obstacle $\mathcal O$, problem \eqref{M} with a
Dirichlet boundary condition $u|_{\p\Omega}=0$ was
solved by Smith and Sogge for the 3D quintic equation. See \cite{SS1}. The case of a smooth
bounded domain in $\R^3$ was recently solved by Burq-Lebeau-Planchon
\cite{BLP}, and Burq-Planchon \cite{BP} who showed the existence and
uniqueness of a global solution for data in
the energy space. The major difficulty in proving such a result is to establish Strichartz type estimates. Let us recall a few historical facts about these estimates.\\
For a manifold $\Omega$ of dimension $d\geq2$ equipped with a
Riemannian metric ${g}$, Strichartz estimates are a family
of space time integral estimates on solutions : $u(t,x) :
(-T,T)\times\Omega\longrightarrow \R$ to the wave equation
\begin{equation*}
                         \begin{array}{rcl}
                         \p_t^2 u-\De_{{g}} u=0\quad\mbox{in }(-T, T)\times{\Omega_x}\;\\\\
                         \small{u(0,x)}\small{=}\small{u_{0}(x)},\;\small{{\p}_{t}u(0,x)}\small{=}\small{u_{1}(x)}.
                         \end{array}
\end{equation*}
Local Strichartz estimates state that
\begin{equation}\label{str}
\|u\|_{L^q((-T,T), L^r(\Omega))}\leq C_T
(\|u_0\|_{H^s(\Omega)}+\|u_1\|_{H^{s-1}(\Omega)}),
\end{equation}
where $H^s(\Omega)$ denotes the $L^2$-based Sobolev space, $2\leq q\leq \infty$ and $2\leq r<\infty$ satisfy
\begin{equation}\label{adm}
\frac 1 q+\frac d r =\frac d 2-s, \quad\quad\quad\frac 2 q+\frac
{d-1} r \leq\frac {d-1} 2.
\end{equation}
Estimates \eqref{str} are said global if the constant $C_T$ is $T$-independent. Estimates involving $q=\infty$ hold when $(n,q,r)\neq (3,2,\infty)$, but typically require the use of Besov spaces.\\
If $\Omega=\R^d$ and ${g}_{ij}=\delta_{ij}$, R. Strichartz
proved in \cite{str} global estimates for the wave and Shr\"odinger
equations in the diagonal case i.e. $q=r$. Then, Ginibre-Velo
\cite{g-v1} and Lindblad-Sogge \cite{LS} generalized them to the
other cases, see also Kato \cite{kat} and Cazenave-Weissler \cite{Gw}.\\
For general manifolds, phenomena such as the existence of trapped geodesics or the
finiteness of the volume can preclude the development of global
estimates,
leading us to consider just local in time estimates.\\
In the case of a compact manifold without boundary, using the finite speed of propagation and working in coordinate charts, the problem is reduced to the proof of the local Strichartz estimates for the variable coefficients wave operators on
 $\R^d$. In this context, Kapitanski in \cite{Kap3} and Mockenhaupt-Seeger-Sogge in \cite{mss} established such inequalities
 for operators with smooth coefficients. Smith in \cite{s} and Tataru in \cite{Tat} have shown Strichartz estimates
 for operators with $C^{1,1}$ coefficients. For more details, see \cite{BSS}.\\
 If $\Omega$ is a manifold with strictly geodesically-concave boundary, Smith-Sogge (see\cite{SS1}) have shown Strichartz estimates for a larger range of exponents in \eqref{adm}.\\
Using the $L^r(\Omega)$ estimates for the spectral projector
obtained by Smith-Sogge \cite{SS},
 Burq-Lebeau-Planchon established Strichartz estimates for a bounded domain for a certain range
  of triples ($q,r,s$), see \cite{BLP}. Recently, Blair-Smith-Sogge in \cite{BSS} expanded the range of indices $q$ and $r$ obtained
 in \cite {BLP} and also to other dimensions.\\
In the case where $\Omega$ is a compact convex domain in $\R^2$,
Ivanovici has very recently shown in \cite{iva} that \eqref{str} cannot hold
 when $r>4$ if $2/q+1/r=1/2$.\\

Going back to the well-posedness issues, observe that in $2D$ all nonlinearities $f$ with polynomial growths are ``subcritical" for the $H^1$ norm. This is due to the limit case of the Sobolev embedding. So, the choice of an exponential nonlinearity appears to be quite natural. Such nonlinearity was investigated by Nakamura and
Ozawa \cite{MN.TO 1,MN.TO 2}. They showed the global solvability and
established the asymptotic in time when the initial data is sufficiently small. In a
recent work, Ibrahim-Majdoub-Masmoudi \cite{sli moh nad1} considered
the case where $f(u)=ue^{4\pi u^2}$. They have quantified the size of the initial data for which one has global well-posedness. More precisely, let
$$
E_0=\|u_1\|^2_{L^2(\R^2)}+\|\nabla
u_0\|^2_{L^2(\R^2)}+\displaystyle\int_{\R^2}\frac{e^{4\pi
u_0^2}-1}{4\pi}\;dx.
$$
Then, solutions with $E_0\leq1$ exist for all time. However, in the ``supercritical" case i.e. $E_0>1$, the same authors have shown
an instability result (see \cite{sli moh nad2}), by proving the non
uniform continuity of the solution map. Recently, a similar
trichotomy was also established by
Colliander-Ibrahim-Majdoub-Masmoudi for the nonlinear
Schr\"odinger equation with the same type of nonlinearity. See \cite{CIMM}.\\

In this paper, we propose to extend the above results to the case of bounded 2D domains. We establish a trichotomy in the dynamic for both Dirichlet and Neumann type boundary conditions. More precisely, consider the 2D,  $H^1$-critical wave equation
\begin{equation}\label{C1}
                         \begin{array}{rcl}
                         (\p_t^2-\triangle_D)u+u(e^{4\pi u^2}-1)=0\quad\mbox{in }\mathbb{R}_t\times{\Omega_x}\\\\
                         \small{u(0,x)}\small{=}\small{u_{0}(x)},\;\small{{\p}_{t}u(0,x)}\small{=}\small{u_{1}(x)}\\\\
                         \small{u|_{\R_t\times\p\Omega_x}}\small{=}0,
                         \end{array}
\end{equation}
where $\Omega\subset\R^2$ is a smooth bounded domain, $u=u(t,x)$ is a real-valued function  and $\triangle_D$ denotes the Laplace-Beltrami operator with Dirichlet boundary conditions. The initial data ($u_0$,$u_1$) are in the energy space $H_0^1(\Omega)\times L^2(\Omega)$.\\
A solution $u\in {\mathcal C}([0,T],H^1)\cap{\mathcal C}^1([0,T],L^2)$ of
the Cauchy Problem \eqref{C1} satisfies the following conservation
law
\begin{equation}\label{enr}
E(u,t)=\|\p_t u(t)\|^2_{L^2(\Omega)}+\|\nabla
u(t)\|^2_{L^2(\Omega)}+\int_{\Omega}\frac{e^{4\pi
u^2}-1-4\pi u^2}{4\pi}\;dx=E(u,0).
\end{equation}
A priori, one can estimate the nonlinear part of the energy using
the following sharp Moser-Trudinger-type inequality, see for example  \cite{M}, \cite {T}.

\begin{prop}\label{MT}
Let $\Omega\subset\R^2$ be a bounded domain, and $\alpha\leq 4\pi$.
There exists a constant $C(\Omega)>0$ such that
\begin{equation}\label{MT1}
\sup_{\|\nabla u\|_{L^2(\Omega)}\leq 1} \int_{\Omega}e^{\alpha
u^2}\;dx=C(\Omega)<+\infty.
\end{equation}
Moreover, this inequality is sharp in the sense that for any $\alpha>4\pi$, the supremum in \eqref{MT1} is infinite.
\end{prop}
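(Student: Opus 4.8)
The plan is to prove \eqref{MT1} by Moser's classical two–step reduction (symmetrization, then a logarithmic change of variables reducing matters to a one–dimensional integral inequality), and to prove sharpness with the explicit Moser family. Note first that the supremum in \eqref{MT1} is implicitly taken over $u\in H_0^1(\Omega)$ (otherwise large constants make it infinite). Since the functional only sees $|u|$, and since $u\in H_0^1(\Omega)$ implies $|u|\in H_0^1(\Omega)$ with $|\nabla|u||=|\nabla u|$ a.e., I would first replace $u$ by the Schwarz symmetrization $u^\ast:=|u|^\ast$, a radial, nonnegative, nonincreasing element of $H_0^1(B_R)$, where $B_R\subset\R^2$ is the disk with $|B_R|=|\Omega|$. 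The Pólya–Szegő inequality gives $\|\nabla u^\ast\|_{L^2(B_R)}\le\|\nabla u\|_{L^2(\Omega)}\le 1$, and equimeasurability gives $\int_\Omega(e^{\alpha u^2}-1)\,dx=\int_{B_R}(e^{\alpha (u^\ast)^2}-1)\,dx$; adding $|\Omega|=|B_R|$ yields $\int_\Omega e^{\alpha u^2}\,dx=\int_{B_R}e^{\alpha (u^\ast)^2}\,dx$. So it suffices to bound this last integral over radial nonincreasing $v\in H_0^1(B_R)$ with $\|\nabla v\|_{L^2}\le1$, by a constant depending only on $|\Omega|$.

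On $B_R$ I would then set $|x|=Re^{-t/2}$, $t\in[0,\infty)$, and $w(t)=\sqrt{4\pi}\,v(Re^{-t/2})$. A direct computation (using $\int_{B_R}|\nabla v|^2=2\pi\int_0^R|v'(r)|^2 r\,dr$ and $r\,dr=-\tfrac{R^2}{2}e^{-t}\,dt$) gives
\[
\|\nabla v\|_{L^2(B_R)}^2=\int_0^\infty \dot w(t)^2\,dt,\qquad \int_{B_R}e^{\alpha v^2}\,dx=|\Omega|\int_0^\infty e^{\frac{\alpha}{4\pi}w(t)^2-t}\,dt,
\]
while the Dirichlet condition $v(R)=0$ becomes $w(0)=0$. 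Hence \eqref{MT1} reduces to showing $\sup\left\{\int_0^\infty e^{\frac{\alpha}{4\pi}w^2-t}\,dt \ :\ w(0)=0,\ \int_0^\infty\dot w^2\,dt\le1\right\}<\infty$. Since $\alpha\le4\pi$ forces $\tfrac{\alpha}{4\pi}w^2\le w^2$, it is enough to treat $\alpha=4\pi$, which is exactly \emph{Moser's lemma}: if $w(0)=0$ and $\int_0^\infty\dot w^2\,dt\le1$, then $\int_0^\infty e^{w(t)^2-t}\,dt\le C_0$ for an absolute constant $C_0$.

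I would recall the proof of Moser's lemma as follows: Cauchy–Schwarz gives $w(t)^2=\bigl(\int_0^t\dot w\bigr)^2\le t\int_0^t\dot w^2\le t$, so the integrand is $\le 1$ pointwise; by the layer–cake formula, $\int_0^\infty e^{w^2-t}\,dt=\int_0^\infty \bigl|\{t:\,w(t)^2>t-\lambda\}\bigr|\,e^{-\lambda}\,d\lambda$, so it suffices to show $\bigl|\{t:\,w(t)^2>t-\lambda\}\bigr|\le \lambda+C$ for an absolute $C$. The set $[0,\lambda)$ accounts for the $\lambda$; on $[\lambda,\infty)$ one argues that the only way $w(t)^2$ can stay within $O(1)$ of the diagonal $t$ is near–saturation of Cauchy–Schwarz, which forces $\dot w$ to be essentially constant, and the unit energy budget permits that only over a bounded $t$–range. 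This quantitative step — rather than the lossy pointwise bound $w^2\le t$ — is the genuine technical heart, and pins down the sharp threshold $4\pi$ (the factor $4\pi$ produced by the logarithmic change of variables on a planar disk); the full details are in \cite{M,T}. This is the main obstacle of the argument; everything else is routine.

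For sharpness when $\alpha>4\pi$, fix a disk $B(x_0,\rho)\subset\Omega$ and, for $n\ge2$, take the Moser function $u_n$ supported in $B(x_0,\rho)$ defined by
\[
u_n(x)=\frac{1}{\sqrt{2\pi}}
\begin{cases}
\sqrt{\log n}, & |x-x_0|\le\rho/n,\\
\dfrac{\log(\rho/|x-x_0|)}{\sqrt{\log n}}, & \rho/n\le|x-x_0|\le\rho,\\
0, & |x-x_0|\ge\rho.
\end{cases}
\]
Then $u_n\in H_0^1(\Omega)$ and a one–line computation gives $\|\nabla u_n\|_{L^2(\Omega)}^2=\dfrac{1}{\log n}\displaystyle\int_{\rho/n}^{\rho}\frac{dr}{r}=1$, while
\[
\int_\Omega e^{\alpha u_n^2}\,dx\ \ge\ \int_{B(x_0,\rho/n)}e^{\alpha u_n^2}\,dx\ =\ \pi\rho^2\, n^{\frac{\alpha}{2\pi}-2}\ \longrightarrow\ +\infty
\]
as $n\to\infty$, since $\tfrac{\alpha}{2\pi}-2>0$ precisely when $\alpha>4\pi$. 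Hence the supremum in \eqref{MT1} is $+\infty$ in that range, which is the asserted sharpness.
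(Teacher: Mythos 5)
The paper does not actually prove Proposition \ref{MT}: it is quoted as a classical result with references to Moser \cite{M} and Trudinger \cite{T}, so there is no in-paper argument to compare yours against. Your write-up is the standard Moser proof, and the parts you carry out in full are correct: the observation that the supremum must be taken over $u\in H_0^1(\Omega)$ is right (and worth making explicit, since the statement as printed omits it); the symmetrization/P\'olya--Szeg\H{o} reduction to radial decreasing functions on the ball of the same measure is fine; the change of variables $|x|=Re^{-t/2}$, $w=\sqrt{4\pi}\,v$ is computed correctly and does reduce the claim to the one-dimensional statement $\int_0^\infty e^{w^2-t}\,dt\le C_0$ for $w(0)=0$, $\int_0^\infty\dot w^2\le1$; and the sharpness computation with the Moser functions $u_n$ is complete ($\|\nabla u_n\|_{L^2}=1$ and $\int_{B(x_0,\rho/n)}e^{\alpha u_n^2}=\pi\rho^2n^{\alpha/(2\pi)-2}\to\infty$ exactly for $\alpha>4\pi$). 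The one genuinely incomplete point is the one you flag yourself: the proof of Moser's one-dimensional lemma. Your layer-cake reduction to the bound $\bigl|\{t:\,w(t)^2>t-\lambda\}\bigr|\le\lambda+C$ is a legitimate route (it is essentially the Adams--Garsia form of the lemma), but the assertion that ``near-saturation of Cauchy--Schwarz forces $\dot w$ essentially constant over a bounded range'' is exactly the step that distinguishes the sharp exponent $4\pi$ from the easy subcritical case, and you do not execute it — you defer to \cite{M,T}. Since the paper itself defers the entire proposition to the same sources, this is not a defect relative to the paper, but be aware that as a self-contained proof your argument establishes everything except that lemma.
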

In our paper we take $\alpha=4\pi$, and then the discussion will be based on the
size of the initial data in the energy space. More precisely, we
distinguish the cases $E_0\leq 1$ and $E_0>1$ where $E_0=E(u,0)$ is the energy of a solution $u$.\\
Our first result is the following Strichartz type estimate\footnote{This result was stated and proved in early 2009, and now it is embedded in Theorem 1.1 in \cite{BSS}. 
However its proof is different.}.
\begin{thm}\label{st1}
Suppose that $u\in{\mathcal C}([0,T],H^1_0)\cap{\mathcal
C}^1([0,T],L^2)$ solves the linear inhomogeneous linear wave equation with Dirichlet boundary condition and $f\in L^1([0,T],L^2)$
\begin{equation}\label{cf}
                         \begin{array}{rcl}
                         (\p_t^2-\triangle_D)u=f\quad\mbox{in }\mathbb{R}_t\times{\Omega_x}\\\\
                         \small{u(0,x)}\small{=}\small{u_{0}(x)},\;\small{{\p}_{t}u(0,x)}\small{=}\small{u_{1}(x)}\\\\
                         \small{u|_{(0,T)\times\p\Omega_x}}\small{=} 0.
                         \end{array}
\end{equation}
Then, a constant $C_T$ exists such that
\begin{equation}\label{2}
 \|u\|_{L^8((0,T), C^{1/8}(\Omega))}\leq C_T (\|u_0\|_{H_0^1(\Omega)}+\|u_1\|_{L^2(\Omega)}+\|f\|_{L^1((0,T),
L^2(\Omega))}).
 \end{equation}
\end{thm}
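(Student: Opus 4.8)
The plan is to follow the strategy of Burq-Lebeau-Planchon \cite{BLP}, deducing the space-time estimate \eqref{2} from the $L^q$ spectral projector bounds of Smith-Sogge \cite{SS} for $\triangle_D$ on the bounded domain $\Omega\subset\R^2$. First I would reduce to the homogeneous equation: by Duhamel's formula, a solution of \eqref{cf} is written as the free evolution of $(u_0,u_1)$ plus $\int_0^t \frac{\sin((t-s)\sqrt{-\triangle_D})}{\sqrt{-\triangle_D}} f(s)\,ds$, and since $f\in L^1([0,T],L^2)$ the inhomogeneous term is handled by Minkowski's inequality once the homogeneous estimate is known; thus it suffices to prove $\|e^{\pm it\sqrt{-\triangle_D}} v\|_{L^8((0,T),C^{1/8}(\Omega))}\lec \|v\|_{H^1_0}$ (after rewriting $u$ via $\cos$ and $\sin$ propagators and noting $\sqrt{-\triangle_D}^{-1}$ maps $L^2\to H^1_0$).

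Next I would localize in frequency. Using a Littlewood-Paley decomposition $v=\sum_k v_k$ adapted to the spectrum of $\sqrt{-\triangle_D}$ (i.e.\ $v_k=\varphi(2^{-k}\sqrt{-\triangle_D})v$), the target norm $C^{1/8}$ is an $\ell^\infty$-type Hölder norm, so one estimates each dyadic piece and sums; the Hölder gain $1/8$ costs a factor $2^{k/8}$ when passing from a bound in $L^\infty(\Omega)$ to $C^{1/8}(\Omega)$ via Bernstein, and one must check the resulting $2^{k(1 - \text{something})}$ sum converges after using the $H^1$ norm, which provides $2^{-k}\|v_k\|_{L^2}$-type control. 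The core estimate is then a fixed-frequency ($2^k\sim\lambda$) bound of the form $\|e^{it\sqrt{-\triangle_D}}v_\lambda\|_{L^8((0,T),L^\infty(\Omega))}\lec \lambda^{\sigma}\|v_\lambda\|_{L^2}$ for the appropriate $\sigma$; here one invokes the Smith-Sogge spectral cluster estimate $\|\mathbf{1}_{[\lambda,\lambda+1]}(\sqrt{-\triangle_D})\|_{L^2\to L^q(\Omega)}\lec \lambda^{\rho(q)}$ with $\rho(q)$ the 2D boundary exponent, combined with an abstract $TT^*$/duality argument (as in \cite{BLP}) and a stationary-phase / oscillatory-sum estimate in the $t$-variable to gain the $L^8_t$ integrability. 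A key technical point special to $q=\infty$ here is that one cannot apply the cluster estimate directly at $q=\infty$ (it fails), so one runs the argument at a large finite $r$ with $2/8+1/r=1/2$, i.e.\ $r=4$ borderline — actually one should take $r<\infty$ with room to spare — and then upgrades $L^r(\Omega)\to C^{1/8}(\Omega)$ by Sobolev embedding $W^{s,r}\hookrightarrow C^{1/8}$ at the frequency-localized level, absorbing the loss into the dyadic exponent.

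The main obstacle I expect is the endpoint nature of the estimate: squeezing out the full $L^8_t$ integrability together with the $C^{1/8}$ spatial regularity is exactly at the edge of what the 2D Smith-Sogge projector bounds allow, so the summation over dyadic blocks is delicate and the choice of intermediate Lebesgue exponent $r$ and the Bernstein/Sobolev trade-offs must be balanced precisely — there is essentially no slack. A secondary difficulty is that $C_T$ genuinely depends on $T$ (the projector estimates are local, and trapped geodesics in a bounded domain preclude a $T$-uniform statement), so one must be careful to only claim local-in-time control and to let constants depend on $T$ through the Littlewood-Paley cutoffs in the time variable; a standard partition of $[0,T]$ into $O(1)$-length intervals and iteration finishes the argument. \endproof
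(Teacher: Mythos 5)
Your reduction to the homogeneous problem and your decision to route everything through the Smith--Sogge spectral cluster bounds are both in line with the paper, but the central analytic step is not correctly supplied: you never explain how a purely spatial, fixed-frequency bound $\|\chi_\lambda v\|_{L^8(\Omega)}\lec\lambda^{1/4}\|v\|_{L^2}$ produces $L^8$ integrability \emph{in time}, and the mechanism you name (``abstract $TT^*$/duality'' plus ``a stationary-phase / oscillatory-sum estimate in the $t$-variable'') is not what Burq--Lebeau--Planchon or this paper do. Carrying out a $TT^*$/stationary-phase argument would require a parametrix for the half-wave group near $\p\Omega$, which is precisely the hard analysis that the spectral-projector approach is designed to avoid. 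The actual trick is to replace $|D|=\sqrt{-\triangle_D}$ by the operator $A$ acting as multiplication by $[\lambda]$ on each eigenfunction: then $e^{itA}u_0$ is $2\pi$-periodic in $t$ and its time-Fourier coefficients are exactly the unit-width clusters $C_k(x)=\chi_k u_0(x)$, so the one-dimensional Sobolev embedding $H^{3/8}(0,2\pi)\hookrightarrow L^8(0,2\pi)$ together with Parseval in $t$ and Minkowski reduces the $L^8_tL^8_x$ bound to $\sum_k(1+k)^{3/4}\|\chi_k u_0\|_{L^8(\Omega)}^2$, which Smith--Sogge at $q=8$ controls by $\|u_0\|_{H_D^{5/8}}^2$; the error $A-|D|$ is bounded and is absorbed by Duhamel. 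Without this (or a genuine substitute), your proof has a hole exactly where the theorem's content lies.

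Secondarily, your passage from Lebesgue bounds to $C^{1/8}(\Omega)$ is muddled: the remark about ``$2/8+1/r=1/2$, i.e. $r=4$'' conflates the wave-admissibility condition with the range of the cluster estimate, and a Littlewood--Paley/Bernstein calculus for $-\triangle_D$ on a domain with boundary is itself not free (powers of the Dirichlet Laplacian impose compatibility conditions, so naive Bernstein inequalities in $W^{s,p}$ need justification). The paper sidesteps this entirely: from the $L^8_tL^8_x$ bound with $H_D^{5/8}$ data it applies elliptic regularity to get $L^8_tW^{2,8}_x$ from $H_D^{21/8}$ data, complex-interpolates with $\theta=3/16$ to land in $L^8_tW^{3/8,8}_x$ from $H^1_D$ data, and concludes with the two-dimensional Sobolev embedding $W^{3/8,8}(\Omega)\hookrightarrow C^{1/8}(\Omega)$. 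You should adopt (or explicitly prove) one of these two routes rather than gesture at ``absorbing the loss into the dyadic exponent.''
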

To prove this estimate, we follow the same approach of Burq-Lebeau-Planchon \cite{BLP}  in the case of a
 bounded domain $\Omega\subset\R^3$. Their idea is based on a recent result established by Smith and Sogge \cite{SS}
to control the $L^5W^{\frac 3 {10},5}$ norm of the solution of the free wave equation by the energy norm.\\

To estimate the $L^1_T L^2_x$ norm of the nonlinear term $u(e^{4\pi u^2}-1)$,
we remark that its $L^2(\Omega)$ norm already doubles the exponent $4\pi$.
Therefore, the inequality \eqref{MT1} is insufficient to control it.
To overcome this difficulty, we use the following logarithmic
inequality with sharp constant proved in \cite{IMM}.

\begin{prop}\label{elog}
For any real number $\displaystyle\lambda>\frac{4}{\pi}$ there
exists a constant $C_\lambda$ such that, for any function
 $u$ belonging to $H_0^1(\Omega)\cap\dot{C}^{1/8}(\Omega)$, we have
\begin{eqnarray}\label{il}
\|u\|^2_{L^\infty}\leq \lambda\|\nabla
u\|^2_{L^2(\Omega)}\log(C_\lambda+\frac{\|u\|_{\dot{C}^{1/8}(\Omega)}}{\|\nabla
u\|_{L^2(\Omega)}}).
\end{eqnarray}
Moreover, the above inequality does not hold for
$\lambda=\displaystyle\frac{4}{\pi}.$
\end{prop}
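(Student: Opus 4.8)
The plan is to prove the logarithmic inequality \eqref{il} by a now-standard decomposition of $u$ into low and high frequencies, optimizing over the cutoff, and then to establish the sharpness of the constant $4/\pi$ by testing against a Moser-type concentrating sequence. Throughout, write $A = \|\nabla u\|_{L^2(\Omega)}$ and $B = \|u\|_{\dot C^{1/8}(\Omega)}$, and note that by scaling we may assume $A = 1$, so that we must show $\|u\|_{L^\infty}^2 \le \lambda \log(C_\lambda + B)$.

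First I would reduce the domain to the whole plane. Extend $u$ to $\tilde u \in H^1(\R^2) \cap \dot C^{1/8}(\R^2)$ with $\|\nabla \tilde u\|_{L^2(\R^2)} \lec A$ and $\|\tilde u\|_{\dot C^{1/8}(\R^2)} \lec B$ using a standard extension operator on the smooth bounded domain $\Omega$ (the Dirichlet/boundary role of $u$ is irrelevant here; only the two norms matter). Then fix a Littlewood-Paley decomposition $\tilde u = \sum_{j} \De_j \tilde u$ and split at a parameter $N$ to be chosen: $\|\tilde u\|_{L^\infty} \le \sum_{j \le N} \|\De_j \tilde u\|_{L^\infty} + \sum_{j > N} \|\De_j \tilde u\|_{L^\infty}$. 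For the low frequencies, Bernstein in $2D$ gives $\|\De_j \tilde u\|_{L^\infty} \lec 2^{j} \|\De_j \tilde u\|_{L^2} \lec \|\na \De_j \tilde u\|_{L^2}$ only up to a loss; the sharp route is instead to use that in $2D$ the critical Sobolev embedding \emph{just} fails logarithmically, i.e. $\sum_{j \le N} \|\De_j \tilde u\|_{L^\infty} \lec \sqrt{N}\,\|\na \tilde u\|_{L^2}$ by Cauchy-Schwarz on the $j$-sum against $\big(\sum_j \|\na\De_j \tilde u\|_{L^2}^2\big)^{1/2} \lec A$, with the constant in front of $\sqrt N$ being exactly $1/\sqrt\pi$ after one tracks the $2D$ Bernstein constant $\|\De_j \tilde u\|_{L^\infty} \le (2\pi)^{-1}\|\widehat{\De_j \tilde u}\|_{L^1} \le \pi^{-1/2}\|\na\De_j \tilde u\|_{L^2}$ (the annulus has measure comparable to $2^{2j}$, and $|\xi| \sim 2^j$ there). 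For the high frequencies, use $\|\De_j \tilde u\|_{L^\infty} \lec 2^{-j/8}\|\tilde u\|_{\dot C^{1/8}}$, so $\sum_{j > N} \|\De_j \tilde u\|_{L^\infty} \lec 2^{-N/8} B$, which is $O(1)$ once $2^{N} \gtrsim B^8$, i.e. $N \sim 8\log_2 B$. Squaring $\|\tilde u\|_{L^\infty} \lec \sqrt{N} + 1$ and substituting $N \sim \frac{8}{\log 2}\log B$ yields $\|u\|_{L^\infty}^2 \le \frac{1}{\pi}\cdot\frac{8}{\log 2}\,\log B\,(1 + o(1))$; absorbing lower-order terms into an additive constant $C_\lambda$ inside the logarithm upgrades this to the stated form with any $\lambda > 8/(\pi \log 2)$.

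\textbf{The main obstacle} is that the naive Cauchy-Schwarz argument above gives the constant $8/(\pi\log 2) \approx 3.67$, not the claimed $4/\pi \approx 1.27$, so a genuinely sharper argument is needed. The fix is to avoid Littlewood-Paley and work directly with the Fourier integral: write $u(x_0) = (2\pi)^{-2}\int \hat u(\xi)\,e^{ix_0\cdot\xi}\,d\xi$ and split the $\xi$-integral over $\{|\xi| \le R\}$ and $\{|\xi| > R\}$. On the low-frequency ball, Cauchy-Schwarz gives $\big|\int_{|\xi|\le R}\hat u\,d\xi\big| \le \big(\int_{|\xi|\le R} |\xi|^{-2}\,d\xi\big)^{1/2}\big(\int |\xi|^2 |\hat u|^2\,d\xi\big)^{1/2} = (2\pi \log R)^{1/2}\cdot (2\pi)\|\na u\|_{L^2}$, where $\int_{1\le|\xi|\le R}|\xi|^{-2}d\xi = 2\pi\log R$ is the source of the logarithm — and crucially the $2\pi$ here is what produces the $1/\pi$ rather than $1/(\pi\log 2)$, since there is no dyadic summation inflating the constant. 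On the high-frequency region, use the $\dot C^{1/8}$ bound on $\hat u$ (Hölder-type decay $|\hat u(\xi)| \lec |\xi|^{-2-1/8}$ in an averaged sense, or more carefully a Hölder-continuity-to-frequency-decay estimate) to get an $O(1)$ tail once $R \sim B^c$ for the appropriate power $c$. Optimizing $R$ then gives $\|u\|_{L^\infty}^2 \le \big(\tfrac{4}{\pi} + \varepsilon\big)\|\na u\|_{L^2}^2 \log\!\big(C + \|u\|_{\dot C^{1/8}}/\|\na u\|_{L^2}\big)$, where the factor $4$ versus $2$ comes from squaring $2\pi \cdot (2\pi\log R)^{1/2}/(2\pi)^2 = (\log R/2\pi)^{1/2}$ — one must bookkeep the normalization of the Fourier transform carefully here, as this is precisely where the sharp constant lives.

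Finally, for the optimality claim that \eqref{il} fails at $\lambda = 4/\pi$, I would exhibit the Moser sequence adapted to $\Omega$: fix a ball $B(x_0, r_0) \subset \Omega$ and set $u_n(x) = \frac{1}{\sqrt{2\pi}}\cdot f_n(|x - x_0|/r_0)$ where $f_n$ is the standard radial Moser function — linear in $\log(1/s)$ for $e^{-n/2} \le s \le 1$, equal to $\sqrt n$ for $s \le e^{-n/2}$, and zero for $s \ge 1$ — so that $\|\na u_n\|_{L^2(\Omega)} = 1$ exactly, $\|u_n\|_{L^\infty} = \sqrt{n/(2\pi)}$, hence $\|u_n\|_{L^\infty}^2 = n/(2\pi)$. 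A direct computation of the Hölder seminorm of the log-profile gives $\|u_n\|_{\dot C^{1/8}(\Omega)} \sim c\, e^{n/16}\sqrt n$ (the profile is flat except on the thin annulus $s \approx e^{-n/2}$, where the relevant difference quotient over a scale $\delta = e^{-n/2}$ is $\sqrt n \cdot \delta^{-1/8} = \sqrt n\, e^{n/16}$), so $\log(C + \|u_n\|_{\dot C^{1/8}}) \sim \frac{n}{16}$. If \eqref{il} held with $\lambda = 4/\pi$ we would need $\frac{n}{2\pi} \le \frac{4}{\pi}\cdot 1\cdot\frac{n}{16}(1 + o(1)) = \frac{n}{4\pi}(1+o(1))$, which is false for large $n$ since $1/(2\pi) > 1/(4\pi)$. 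This contradiction (a clean factor-of-two gap) establishes that the constant cannot be lowered to $4/\pi$, and — reassuringly — the same computation run with general $\lambda$ shows the inequality \emph{can} hold for every $\lambda > 4/\pi$, confirming the threshold is exactly $4/\pi$. \endproof
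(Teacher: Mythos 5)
The paper does not prove this proposition; it is quoted verbatim from \cite{IMM}, so there is no in-paper argument to compare with. Judged on its own terms, your plan for the positive direction (a sharp low/high frequency splitting in which the constant $\frac{1}{2\pi}\log R$ from the Cauchy--Schwarz estimate $\bigl(\int_{1\le|\xi|\le R}|\xi|^{-2}d\xi\bigr)^{1/2}=(2\pi\log R)^{1/2}$ combines with the cutoff $R\sim B^{8}$ forced by the $\dot C^{1/8}$ norm to give $\frac{8}{2\pi}=\frac{4}{\pi}$) is the right mechanism and is in the spirit of the proof in \cite{IMM}. It is still only a sketch: the region $|\xi|\le 1$ is not controlled by $\|\nabla u\|_{L^2}$ through your Cauchy--Schwarz step (you need Poincar\'e on the bounded domain, or the $L^2$ norm), and the high-frequency tail cannot be handled by claiming pointwise decay $|\hat u(\xi)|\lesssim|\xi|^{-2-1/8}$ from H\"older continuity, which is false in general; the clean route is to compare $u(x_0)$ with a mollification $u*\varphi_\delta(x_0)$, bound the difference by $\delta^{1/8}\|u\|_{\dot C^{1/8}}$, and apply the sharp $L^2$ estimate only to the mollified (low-frequency) part.

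The genuine gap is in the optimality argument. Your normalization of the Moser sequence is off by a factor of $\sqrt2$: with $f_n$ equal to $\sqrt n$ on the plateau and linear in $\log(1/s)$ on $e^{-n/2}\le s\le1$, one has $\|\nabla(f_n/\sqrt{2\pi})\|_{L^2}^2=2$, not $1$. Normalizing correctly (as in the paper's own \eqref{lesfk}) gives $\|u_n\|_{L^\infty}^2=\frac{n}{4\pi}$ when $\|\nabla u_n\|_{L^2}=1$, not $\frac{n}{2\pi}$. Since $\log\bigl(C+\|u_n\|_{\dot C^{1/8}}\bigr)=\frac{n}{16}+O(\log n)$, the test of \eqref{il} at $\lambda=\frac{4}{\pi}$ reads $\frac{n}{4\pi}\le\frac{4}{\pi}\cdot\frac{n}{16}\,(1+o(1))=\frac{n}{4\pi}(1+o(1))$: the two sides agree to leading order, so your ``clean factor-of-two gap'' is an artifact of the normalization error and no contradiction is obtained. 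The Moser sequence only shows that no $\lambda<\frac{4}{\pi}$ works; failure \emph{at} the endpoint $\lambda=\frac{4}{\pi}$ is a genuinely finer statement. (It can in fact be extracted from this sequence, but only by tracking the subleading term: $\|u_n\|_{\dot C^{1/8}}\sim c\,e^{n/16}/\sqrt n$ — note the power of $n$ is $-1/2$, not $+1/2$ as you wrote — so the right-hand side is $\frac{n}{4\pi}-\frac{2}{\pi}\log n+O(1)$, which does eventually drop below $\frac{n}{4\pi}$; alternatively one uses the perturbed sequences of \cite{IMM}.) Finally, testing a single sequence cannot ``confirm'' that the inequality holds for every $\lambda>\frac{4}{\pi}$; that direction must come from the constructive proof.
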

Recall that for $0<\alpha<1$, $\dot{C}^{\alpha}$ denotes the homogeneous H\"older space: the set of continuous functions $u$ whose norm $\|u\|_{\dot{C}^{\alpha}}=\displaystyle\sup_{x\neq y}\frac{|u(x)-u(y)|}{|x-y|^{\alpha}}$ is finite. The inhomogeneous H\"older space is ${C}^{\alpha}=\dot{C}^{\alpha}\cap L^{\infty}$ endowed with the norm $\|u\|_{{C}^{\alpha}}=\|u\|_{\dot{C}^{\alpha}}
+\|u\|_{L^\infty}$.

Using the above propositions we can show, through a fixed point argument,  the existence of local in time solutions given by the following result.
\begin{thm}\label{loc}
Assume that ${\|\nabla u_0\|}_{L^2(\Omega)}<1$. Then, there exists a time
$T>0$ and a unique solution $u$ to problem \eqref{C1}, $u\in C([0 , T) , H_0^1(\Omega))\cap C^1([0, T) , L^2(\Omega)).$
Moreover, $u\in L^8([0,T),C^{1/8}(\Omega))$ and satisfies the energy conservation, for all $0\leq t<T$.
\end{thm}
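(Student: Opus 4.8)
The plan is to set up a standard Picard iteration in the complete metric space
$$
X_T = \Big\{ u \in C([0,T],H_0^1)\cap C^1([0,T],L^2)\cap L^8([0,T],C^{1/8}(\Omega)) \ :\ \|u\|_{X_T}\le R \Big\},
$$
where $\|u\|_{X_T}$ combines the energy norm $\sup_{t\le T}(\|\partial_t u\|_{L^2}+\|\nabla u\|_{L^2})$ with the Strichartz norm $\|u\|_{L^8((0,T),C^{1/8}(\Omega))}$, and $R$ is chosen slightly larger than the data size (note $\|\nabla u_0\|_{L^2}<1$ is used crucially to keep the "effective exponent" below the Moser--Trudinger threshold). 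Define the map $\Phi(v)=u$ where $u$ solves the linear Dirichlet wave equation \eqref{cf} with $f=-v(e^{4\pi v^2}-1)$ and data $(u_0,u_1)$. By Theorem \ref{st1}, controlling $\Phi$ reduces to estimating $\|v(e^{4\pi v^2}-1)\|_{L^1((0,T),L^2(\Omega))}$ and the analogous difference of two such nonlinear terms.

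The heart of the argument is the nonlinear estimate. First I would bound the $L^2(\Omega)$ norm of $v(e^{4\pi v^2}-1)$ at a fixed time: since $|v(e^{4\pi v^2}-1)|^2\lesssim v^2 e^{8\pi v^2}$ pointwise, and $v^2\le \|v\|_{L^\infty}^2$, we get
$$
\|v(e^{4\pi v^2}-1)\|_{L^2(\Omega)}^2 \lesssim \|v\|_{L^\infty}^2 \int_\Omega e^{8\pi v^2}\,dx.
$$
Here is where Proposition \ref{elog} enters: choosing $\lambda$ slightly bigger than $4/\pi$, the logarithmic inequality gives
$$
8\pi\|v\|_{L^\infty}^2 \le 8\pi\lambda\|\nabla v\|_{L^2}^2 \log\!\Big(C_\lambda+\frac{\|v\|_{\dot C^{1/8}}}{\|\nabla v\|_{L^2}}\Big),
$$
so that $\int_\Omega e^{8\pi v^2}\le C(C_\lambda+\|v\|_{\dot C^{1/8}}/\|\nabla v\|_{L^2})^{8\pi\lambda\|\nabla v\|_{L^2}^2}$. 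Because $\|\nabla v\|_{L^2}$ stays near $\|\nabla u_0\|_{L^2}<1$ on a short time interval (by energy continuity, after possibly shrinking $T$ and $R$), the exponent $8\pi\lambda\|\nabla v\|_{L^2}^2$ can be made strictly less than $8$ by taking $\lambda$ close enough to $4/\pi$ — this is the decisive point, since it makes the $\|v\|_{\dot C^{1/8}}$ factor integrable in time against the $L^8$ Strichartz norm. Then integrating in $t$ and applying Hölder in time with the pair of conjugate exponents matching the power of $\|v\|_{\dot C^{1/8}(\Omega)}$ and the surviving power of $T$, one obtains
$$
\|v(e^{4\pi v^2}-1)\|_{L^1((0,T),L^2)} \le C\, T^{\theta}\,\mathcal{P}\big(\|v\|_{X_T}\big)
$$
for some $\theta>0$ and a fixed increasing function $\mathcal P$; the factor $T^\theta$ is what makes $\Phi$ a contraction for $T$ small.

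For the difference estimate controlling $\Phi(v_1)-\Phi(v_2)$, I would write $v_1(e^{4\pi v_1^2}-1)-v_2(e^{4\pi v_2^2}-1)=(v_1-v_2)\,g(v_1,v_2)$ using the mean value theorem, where $g$ is dominated by $(1+v_1^2+v_2^2)e^{4\pi(v_1^2+v_2^2)}$ up to constants, and then repeat the Moser--Trudinger/logarithmic-inequality bookkeeping on $g$ (now the relevant exponent involves $\|\nabla v_1\|_{L^2}^2+\|\nabla v_2\|_{L^2}^2$, still controllable below the threshold for $T,R$ small) while pulling out $\|v_1-v_2\|_{L^8((0,T),C^{1/8})}$ via Hölder in time. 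This yields
$$
\|\Phi(v_1)-\Phi(v_2)\|_{X_T}\le C\,T^\theta\,\big\|v_1-v_2\big\|_{X_T},
$$
so $\Phi$ is a contraction on $X_T$ for $T$ sufficiently small, giving existence and uniqueness of the fixed point $u\in X_T$. Finally, the energy identity \eqref{enr} for this solution follows by the standard procedure: establish it first for smooth approximate solutions (or by multiplying the equation by $\partial_t u$ and integrating, justified because $u\in L^8C^{1/8}$ gives enough regularity for the nonlinear term to be integrated by parts in time), then pass to the limit. The main obstacle, as indicated, is precisely the exponential nonlinearity: naively $\|v(e^{4\pi v^2}-1)\|_{L^2}$ doubles the critical constant $4\pi$ to $8\pi$, so one must exploit that $\|\nabla u_0\|_{L^2}<1$ leaves room to absorb the extra factor through the sharp logarithmic inequality rather than through Moser--Trudinger alone.
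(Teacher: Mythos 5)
Your overall architecture --- a contraction driven by the Strichartz estimate of Theorem \ref{st1}, with the nonlinearity measured in $L^1_TL^2_x$ and controlled through the logarithmic inequality \eqref{il}, Moser--Trudinger, and a H\"older-in-time step producing a factor $T^\theta$ --- is the same as the paper's. But the decisive quantitative step is carried out incorrectly. You bound
$$
\|v(e^{4\pi v^2}-1)\|_{L^2(\Omega)}^2\lec \|v\|_{L^\infty}^2\int_\Omega e^{8\pi v^2}\,dx \lec \|v\|_{L^\infty}^2\, e^{8\pi\|v\|_{L^\infty}^2}|\Omega|,
$$
and feed the whole exponent $8\pi\|v\|_{L^\infty}^2$ into \eqref{il}. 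After taking the square root, the power of $\bigl(C_\lambda+\|v\|_{\dot{C}^{1/8}}/\|\nabla v\|_{L^2}\bigr)$ is $4\pi\lambda\|\nabla v\|_{L^2}^2$, which tends to $16\|\nabla v\|_{L^2}^2$ as $\lambda\downarrow 4/\pi$; your claim that this exponent ``can be made strictly less than $8$'' therefore only holds when $\|\nabla u_0\|_{L^2}^2<1/2$, not under the stated hypothesis $\|\nabla u_0\|_{L^2}<1$. The paper's Lemma \ref{lem1} avoids this by splitting the squared exponential: after H\"older in $x$ one uses $(e^{a}-1)^2\le e^{a}(e^{a}-1)$ with $a=4\pi(1+\varepsilon)u_\theta^2$, sends only \emph{one} factor through the $L^\infty$/logarithmic route (yielding the exponent $\gamma=2\pi\lambda(1+\varepsilon)A^2\approx 8(1+\varepsilon)A^2<8$), and controls the remaining factor in $L^{1+\zeta}$ by Moser--Trudinger, which requires $(1+\varepsilon)(1+\zeta)A^2<1$ and hence uses exactly the full hypothesis $A<1$. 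Without this splitting your contraction does not close on the whole admissible range of data.

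A second omission: the theorem asserts uniqueness in $C([0,T),H_0^1)\cap C^1([0,T),L^2)$, i.e.\ unconditional uniqueness in the energy class, whereas a fixed point in a ball of $\mathcal E_T$ only gives uniqueness among solutions that additionally lie in $L^8([0,T),C^{1/8}(\Omega))$ with controlled norm. The paper devotes a separate step to this (Step 2 of the proof, resting on Lemma \ref{lem2}, which bounds $\|e^{4\pi u^2}-1\|_{L^1_T L^2}$ by a continuous function vanishing at $T=0$ using only the energy regularity of the competitor solution together with the Strichartz control of the free evolution $v_L$). Your proposal does not address this part of the statement.
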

Based on the above result and the sharp Moser-Trudinger inequality, we propose as in \cite{sli moh nad1} the following definition.

\begin{defi}
Let $E_0=E(u,t=0)$ given by \eqref{enr}.
The Cauchy problem \eqref{C1} is said to be
\begin{itemize}
  \item Subcritical if $E_0<1.$
  \item Critical if $E_0=1$
  \item Supercritical if $E_0>1.$
\end{itemize}
\end{defi}
Thanks to the energy identity \eqref{enr} and the local existence result,
we can easily show the global existence in the subcritical case as
stated in the following Theorem.
\begin{thm}\label{csc}
For any $(u_0,u_1)\in{H_0^1(\Omega)\times
L^2(\Omega)}$ with energy $E_0<1$ there is a unique global solution $u\in C(\R , H_0^1(\Omega))\cap C^1(\R , L^2(\Omega))$.
Moreover, this solution $u\in L^8_{loc}(\R,C^{1/8}(\Omega))$ and
satisfies \eqref{enr}.
\end{thm}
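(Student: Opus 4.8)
The plan is to combine the local existence result of Theorem~\ref{loc} with the energy conservation law \eqref{enr} and the sharp Moser–Trudinger inequality of Proposition~\ref{MT} to show that a solution can never blow up, so it extends globally in time. The key observation is that when $E_0<1$, the linear part of the energy stays uniformly bounded below the critical threshold, which feeds back into the hypothesis ${\|\nabla u(t)\|}_{L^2(\Omega)}<1$ needed to re-apply the local theory.

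First I would note that since $E_0<1$ and the nonlinear density $\frac{e^{4\pi u^2}-1-4\pi u^2}{4\pi}$ is pointwise nonnegative (because $e^s-1-s\geq 0$ for $s=4\pi u^2\geq 0$), the energy identity \eqref{enr} gives, for any $t$ in the interval of existence,
\begin{equation*}
{\|\nabla u(t)\|}^2_{L^2(\Omega)}\leq E(u,t)=E_0<1.
\end{equation*}
In particular the quantity ${\|\nabla u(t)\|}_{L^2(\Omega)}$ stays bounded away from $1$ by the fixed amount $1-E_0>0$, uniformly in $t$; the same bound controls ${\|\p_t u(t)\|}_{L^2(\Omega)}$ and the nonlinear part of the energy. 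Using Poincaré's inequality on the bounded domain $\Omega$, the full $H_0^1\times L^2$ norm of $(u(t),\p_t u(t))$ is then bounded by a constant depending only on $E_0$ and $\Omega$.

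Next I would run a standard continuation argument. Let $T^*$ be the maximal time of existence of the solution furnished by Theorem~\ref{loc}, and suppose for contradiction $T^*<+\infty$. Because the local existence time $T$ in Theorem~\ref{loc} depends only on an upper bound for ${\|\nabla u_0\|}_{L^2(\Omega)}$ (it is determined by the fixed point radius, which in turn is governed by the Strichartz constant $C_T$ of Theorem~\ref{st1} and the logarithmic inequality of Proposition~\ref{elog}), and since ${\|\nabla u(t)\|}_{L^2(\Omega)}\leq\sqrt{E_0}<1$ for all $t<T^*$ by the previous step, there is a uniform local existence time $\tau=\tau(E_0,\Omega)>0$. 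Picking $t_0<T^*$ with $T^*-t_0<\tau$ and solving \eqref{C1} with data $(u(t_0),\p_t u(t_0))$ yields a solution on $[t_0,t_0+\tau)$, which extends $u$ past $T^*$, contradicting maximality. Hence $T^*=+\infty$, and by the same argument applied backward in time (the equation and the energy are time-reversible, $f$ being odd) the solution exists for all $t\in\R$. Uniqueness on each compact subinterval follows from the uniqueness part of Theorem~\ref{loc} together with a connectedness argument, and membership in $L^8_{loc}(\R,C^{1/8}(\Omega))$ as well as the energy identity \eqref{enr} are inherited from the local statement on each piece.

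The only slightly delicate point — and the place I would be most careful — is the claim that the local existence time can be taken uniform in $t$ along the flow: this requires checking that the construction in Theorem~\ref{loc} produces $T$ as a function of ${\|\nabla u_0\|}_{L^2(\Omega)}$ alone (not of finer norms of the data), which is exactly what the fixed point scheme built on \eqref{2} and \eqref{il} delivers, since the contraction estimates only see the energy-level norm of the data through the term $\|u_0\|_{H_0^1}+\|u_1\|_{L^2}$ and the subcriticality margin $1-{\|\nabla u_0\|}_{L^2(\Omega)}$. Once that is in hand, everything else is the routine bootstrap above.
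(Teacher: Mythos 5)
Your proposal is correct and follows essentially the same route as the paper: conservation of energy plus nonnegativity of the nonlinear density gives $\sup_t\|\nabla u(t)\|_{L^2}^2\leq E_0<1$, and since the local existence time of Theorem~\ref{loc} depends only on the margin $1-\|\nabla u_0\|_{L^2}^2$, the maximal solution cannot have finite lifespan. The paper states this in three lines; your version merely spells out the continuation argument and the uniformity of the local time, which is exactly the point the paper invokes.
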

In the critical case we cannot apply the same arguments used in the
subcritical case. This is due to the fact that the conservation of the energy only does not rule out the possibility for the solution to (at least formally) concentrate in the sense that $$
 \limsup_{t\longrightarrow T^\star}\|\nabla u(t)\|_{L^2(\Omega)}=1.
 $$
 In such a case, we emphasize on the fact that we do not know any nonlinear estimate. Therefore, we use a multiplier techniques,
 we show that such concentration phenomena cannot occur and thus solutions are indeed global.
\begin{thm}
\label{cc} Let $(u_0,u_1)\in{H_0^1(\Omega)\times
L^2(\Omega)}$ with energy $E_0=1$. There is a unique
global solution $u$ in the space $
C(\R , H_0^1(\Omega))\cap C^1(\R , L^2(\Omega))$ with the initial data $(u_0,u_1)$.
Moreover, this solution $u\in L^8_{loc}(\R,C^{1/8}(\Omega))$ and
satisfies \eqref{enr}.
\end{thm}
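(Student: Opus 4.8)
The plan is to combine the local existence theorem with a careful concentration-compactness/multiplier argument that rules out the only possible obstruction to globality. By Theorem~\ref{loc} and the subcritical globality of Theorem~\ref{csc}, it suffices to treat a solution on a maximal interval $[0,T^\star)$ with $T^\star<\infty$ and to derive a contradiction. If $\limsup_{t\to T^\star}\|\nabla u(t)\|_{L^2(\Omega)}<1$, then one could restart the solution past $T^\star$ using Theorem~\ref{loc}, contradicting maximality; so the only case to exclude is the concentration scenario
\[
\limsup_{t\to T^\star}\|\nabla u(t)\|_{L^2(\Omega)}=1 .
\]
Since the energy $E_0=1$ bounds $\|\p_t u\|_{L^2}^2+\|\nabla u\|_{L^2}^2$ plus the nonnegative nonlinear term, and $\|\nabla u\|_{L^2}^2\le 1$ always, the concentration forces the nonlinear potential energy $\int_\Omega \tfrac{e^{4\pi u^2}-1-4\pi u^2}{4\pi}\,dx$ to tend to $0$ along a sequence $t_n\to T^\star$, and also $\|\p_t u(t_n)\|_{L^2}\to 0$ and $\|\nabla u(t_n)\|_{L^2}\to 1$.

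First I would set up the usual machinery attached to a hypothetical concentration point. By finite speed of propagation, concentration at $T^\star$ must occur at a single point $x_0\in\overline{\Omega}$: on truncated backward light cones away from $x_0$ the local energy is small and one gets a local version of Theorem~\ref{loc}, so $u$ extends smoothly there. Near $x_0$ I would use the multiplier $(t-T^\star)\p_t u + (x-x_0)\cdot\nabla u + u$ (the scaling vector field adapted to the cone), integrate the equation $(\p_t^2-\triangle_D)u = -u(e^{4\pi u^2}-1)$ against it over the truncated cone $K_{s}^{T^\star}=\{(t,x): s\le t<T^\star,\ |x-x_0|\le T^\star-t\}$, and track the flux terms on the mantle and on the lateral boundary $\R_t\times\p\Omega$. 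The Dirichlet condition $u|_{\p\Omega}=0$ kills the tangential contributions on the lateral boundary, and if $\Omega$ is (near $x_0$) on the correct side one controls the remaining boundary term by a sign or by the smoothness of $\p\Omega$; this is exactly the point where one invokes $\Omega\subset\R^2$ smooth and, if needed, localizes so that $x_0\in\Omega$ (the boundary-concentration subcase being handled by the Dirichlet condition forcing the local energy to vanish even faster). The outcome of the multiplier identity is a monotonicity/decay statement: the flux of energy through the mantle of the cone between times $s$ and $T^\star$ is controlled and tends to $0$, which upgrades the vanishing of $\|\p_t u(t_n)\|_{L^2}$ on slices to the vanishing of the full local energy flux.

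Next I would combine this with the logarithmic inequality (Proposition~\ref{elog}) and the Strichartz bound (Theorem~\ref{st1}) to propagate regularity up to $T^\star$. On the region near $x_0$ the local gradient energy is $\le 1$, so Proposition~\ref{elog} gives $\|u\|_{L^\infty}^2 \le \lambda \|\nabla u\|_{L^2}^2 \log\!\bigl(C_\lambda + \|u\|_{\dot C^{1/8}}/\|\nabla u\|_{L^2}\bigr)$ with $\lambda$ slightly above $4/\pi$; feeding this into the estimate of $\|u(e^{4\pi u^2}-1)\|_{L^1_t L^2_x}$ and using $\|\nabla u\|_{L^2}^2\le 1-\varepsilon$ away from the concentration point (or the vanishing of the local energy flux near it) yields a closed Gronwall-type bound for $\|u\|_{L^8((s,T^\star),C^{1/8})}$ independent of how close $s$ is to $T^\star$. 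This Strichartz control then implies $u$ stays bounded in $C([s,T^\star],H^1)\cap C^1([s,T^\star],L^2)$, so $(u(t),\p_t u(t))$ has a limit in the energy space as $t\to T^\star$ with $\|\nabla u(T^\star)\|_{L^2}<1$ strictly (since the nonlinear energy being $0$ at a limit point together with $E_0=1$ would force $\|\nabla u\|_{L^2}=1$ and $u\equiv$ const $=0$ locally, contradicting nontriviality of concentration) — in any case, one recovers data to which Theorem~\ref{loc} applies, contradicting the maximality of $T^\star$. Hence $T^\star=\infty$; uniqueness and the energy identity \eqref{enr} for the global solution follow from the local theorem and a continuation argument, and $u\in L^8_{loc}(\R,C^{1/8}(\Omega))$ from the Strichartz bound on each compact subinterval.

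The main obstacle will be the boundary term in the multiplier identity: unlike the whole-space or obstacle settings, here the concentration point may sit on $\p\Omega$, and the dilation vector field $(x-x_0)\cdot\nabla$ is not tangent to a curved boundary, so the flux integral over $\R_t\times\p\Omega$ does not obviously have a good sign. Handling this — either by a delicate choice of the multiplier that is genuinely tangential near $\p\Omega$, or by separately showing that a Dirichlet solution cannot concentrate its $H^1$ energy at a boundary point (because the local energy near $\p\Omega$ is forced to decay by the vanishing trace) — is where the real work lies; everything else is a combination of the already-established Propositions~\ref{MT}, \ref{elog} and Theorems~\ref{st1}, \ref{loc}.
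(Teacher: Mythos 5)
Your setup is right: reduce to a finite $T^\star$ with $\limsup_{t\to T^\star}\|\nabla u(t)\|_{L^2}=1$, localize the concentration to a single point $x^\star\in\overline\Omega$ by finite speed of propagation and a covering argument, and then kill the scenario with multiplier identities on truncated backward cones. But the core of your plan has two genuine gaps. First, you choose the dilation-type multiplier $(t-T^\star)\p_t u+(x-x_0)\cdot\nabla u+u$ and then concede that the resulting flux term on $\R_t\times\p\Omega$ has no evident sign because $(x-x_0)\cdot\nabla$ is not tangent to the curved boundary; you flag this as ``where the real work lies'' without resolving it. The paper avoids this problem entirely: it uses only the multipliers $2\p_t u$ and $2u$, for which the Dirichlet condition annihilates all lateral boundary contributions (this is precisely the simplification the authors emphasize relative to the 3D quintic case, where $x\cdot\nabla u$ is unavoidable). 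Second, your proposed closing step — feeding the logarithmic inequality into a Gronwall-type bound for $\|u\|_{L^8((s,T^\star),C^{1/8})}$ to propagate regularity up to $T^\star$ — cannot work as stated: the nonlinear estimate of Lemma \ref{lem1} produces an exponent $\gamma=2\pi\lambda(1+\varepsilon)A^2<8$ only under the uniform bound $\sup_t\|\nabla u(t)\|_{L^2}\le A<1$, which is exactly what fails in the concentration regime. The paper explicitly notes that no nonlinear estimate is available at the threshold.

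What the paper does instead, and what your plan is missing, is the following chain. From $E_0=1$ and the concentration one deduces not only $\|\p_t u(t)\|_{L^2}\to 0$ but also $u(t)\to 0$ in $L^2$ (via the Cauchy-sequence argument and Fatou), and the full support property $\mathrm{supp}\,\nabla u(t),\ \mathrm{supp}\,\p_t u(t)\subset B(x^\star,T^\star-t)\cap\overline\Omega$. The $\p_t u$ identity then forces the mantle flux to vanish identically. Plugging this into the $2u$ identity, dividing by $-S$, and using the mean value theorem gives $\frac 1S\int_{K^0_S}|\nabla u|^2\to -1$, $\frac 1S\int_{K^0_S}|\p_t u|^2\to 0$, while $\int_{D(S)}\p_t u(S)\,\frac{u(S)}{S}\,dx\to 0$ because $u(S)/S$ stays bounded in $L^2$ and $\p_t u(S)\to 0$; the identity collapses to $0\le -1$. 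No regularity propagation past $T^\star$ and no sign condition on a dilation flux are ever needed. As written, your proposal does not constitute a proof until the boundary-flux issue is actually handled or replaced by this softer two-multiplier argument.
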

In the supercritical case, we shall prove that problem
\eqref{C1} is ill-posed. Precisely, we prove
\begin{thm}\label{ins}
 Let $0<\eta<1$ be small enough. There exist a sequence of positive real number ($t_k$) tending to zero and two sequences $(v_k^\eta)$ and $(w_k^\eta)$
 of solutions of \eqref{C1} satisfying, for any $\varepsilon>0$
\begin{eqnarray}\label{ins1}
\|(v_k^\eta-w_k^\eta)(t=0,\cdot)\|^2_{H_0^1(\Omega)}+\|\p_t(v_k^\eta-w_k^\eta)(t=0,\cdot)\|^2_{L^2(\Omega)}\leq
\varepsilon\;
\end{eqnarray}
and,
\begin{eqnarray}\label{ins2}
0<E(w_k^\eta,0)-1\leq \eta^2\quad \mbox{and}\quad
0<E(v_k^\eta,0)-1\leq3\eta^2e^3
\end{eqnarray}
when $k$ is large enough. Moreover,
\begin{eqnarray}\label{ins3}
\displaystyle\liminf_{k\longrightarrow\infty}\|\p_t(v_k^\eta-w_k^\eta)(t_k,\cdot)\|^2_{L^2(\Omega)}\geq
C.
\end{eqnarray}
The constant $C$ depends only on $\eta$.
\end{thm}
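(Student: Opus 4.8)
The plan is to construct the two families of solutions by exploiting the finite speed of propagation, which localizes the analysis to a small ball where the Dirichlet problem is indistinguishable from the free-space problem, and then to import the ODE-based instability mechanism of Ibrahim-Majdoub-Masmoudi \cite{sli moh nad2}. First I would fix a small ball $B(x_0,r_0)\subset\Omega$ and recall that for $t<r_0$ the value of any solution of \eqref{C1} inside $B(x_0, r_0-t)$ depends only on the Cauchy data restricted to $B(x_0,r_0)$; in particular the boundary condition on $\p\Omega$ plays no role, so on this region the solution coincides with the $\R^2$ solution having the same (extended) data. This reduces the theorem to building the oscillatory data on $\R^2$, which is exactly what is done in \cite{sli moh nad2}: take highly concentrated, rapidly oscillating profiles of the form $u_0^{(k)}(x)=\phi(x)\,\frac{1}{\sqrt{4\pi}}\big(\log k\big)^{1/2}\,\psi\!\big(k|x-x_0|\big)$-type bumps (with an appropriate cutoff $\phi$ supported in $B(x_0,r_0)$), tuned so that $\|\nabla u_0^{(k)}\|_{L^2}^2$ sits just above $1$ and the nonlinear part of the energy is controlled, giving \eqref{ins2}.

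Next I would analyze the dynamics. Because the data concentrate at scale $1/k$, on the time interval $[0, t_k]$ with $t_k\to 0$ chosen comparable to a slowly growing multiple of $1/k$ or of $1/\sqrt{\log k}$, the evolution is governed (after rescaling) by the spatially homogeneous ODE $\ddot a + a(e^{4\pi a^2}-1)=0$ with oscillatory initial data, precisely the ``quantitative study of the associated ODE'' advertised in the abstract. One takes $v_k^\eta$ and $w_k^\eta$ to have data that are $\varepsilon$-close in $H^1_0\times L^2$ — for instance differing only by a small perturbation of the amplitude or of a phase — so that \eqref{ins1} holds, yet whose energies straddle the threshold with the asymmetric bounds $\eta^2$ versus $3\eta^2 e^3$ in \eqref{ins2}; the factor $e^3$ is the signature of the exponential nonlinearity feeding on a logarithmically large amplitude. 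The ODE comparison then shows that by time $t_k$ the two solutions have drifted apart in $\p_t$ by an amount bounded below independently of $k$, yielding \eqref{ins3} after undoing the rescaling and using that $\|\p_t(v_k^\eta-w_k^\eta)(t_k)\|_{L^2(B(x_0,r_0-t_k))}$ is a faithful lower bound for the full $L^2(\Omega)$ norm.

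The main obstacle, and the step requiring genuine work, is the quantitative ODE analysis: one must show that the ``supercritical'' ODE solutions with energy slightly above $1$ behave qualitatively differently (blow-up in a controlled short time, or a large excursion) from those exactly at or below the threshold, and that this difference is robust under the $\varepsilon$-small perturbation of the data while simultaneously respecting the two-sided energy bounds in \eqref{ins2}. This forces a delicate bookkeeping of three competing small parameters — $\eta$ (the supercriticality gap), $\varepsilon$ (the data distance), and $1/k$ (the concentration scale) — together with a careful estimate of the nonlinear energy $\int \frac{e^{4\pi u^2}-1-4\pi u^2}{4\pi}\,dx$ for the concentrating profiles, where the borderline Moser-Trudinger behavior (Proposition \ref{MT}) makes the constants sharp and unforgiving. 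A secondary technical point is verifying that the error between the genuine PDE evolution and the model ODE stays negligible on $[0,t_k]$ in the relevant norms; this is where the finite-speed-of-propagation localization and the smallness of $t_k$ are essential, and where one may also invoke the local theory of Theorem \ref{loc} to guarantee the solutions $v_k^\eta, w_k^\eta$ actually exist on $[0,t_k]$.
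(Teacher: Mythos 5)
Your overall architecture (Moser-type concentrating data, finite speed of propagation to localize away from $\p\Omega$, reduction to the ODE $\ddot y + y(e^{4\pi y^2}-1)=0$) matches the paper's, but two of your central claims misidentify the mechanism and would not go through as stated. First, you describe the ODE analysis as showing that slightly supercritical solutions ``blow up in a controlled short time, or [make] a large excursion,'' qualitatively unlike threshold solutions. That is not what happens: the nonlinearity is defocusing, so every ODE solution is global, bounded and \emph{periodic}. The actual decoherence mechanism is that the period depends sensitively on the amplitude (the paper's Lemma \ref{T3} gives $T_k\lesssim \sqrt{k}\,e^{-\frac k2(1+\frac1k)^2}$ for amplitude $(1+\frac1k)\sqrt{k/4\pi}$), so the two ODE solutions $\phi_k$ and $\psi_k$ --- both supercritical, with amplitudes $(1+\frac1k)\sqrt{k/4\pi}$ and $\sqrt{k/4\pi}$ --- drift out of phase within a quarter period; one then chooses $t_k\le \frac{\eta}{2}e^{-k/2}$ so that $\phi_k$ has descended by $\phi_k(0)^{-1}$ while $\psi_k$ has barely moved, and the conserved ODE energy yields $|\p_t(\phi_k-\psi_k)(t_k)|^2\gtrsim e^{k}$. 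Without this period/phase computation there is no proof of \eqref{ins3}; nothing about sub- versus supercritical ODE behaviour enters.

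Second, you treat the PDE-to-ODE reduction as an approximation whose error must be shown ``negligible on $[0,t_k]$.'' The paper needs no such error estimate: the Moser function $f_k$ is \emph{constant} on the innermost ball $|x|\le \eta e^{-k/2}$, so by finite speed of propagation the PDE solution coincides \emph{exactly} with the ODE solution in the backward light cone over that ball --- no rescaling, no comparison lemma. If you instead take bumps at scale $1/k$ with amplitude $\sim\sqrt{\log k}$ that are not constant near the concentration point, you would genuinely have to prove a quantitative PDE/ODE approximation result, which is the hard part of other arguments and is not supplied here. Relatedly, the lower bound \eqref{ins3} rests on an exact balance you do not exhibit: the region where the solutions agree with the ODE at time $t_k$ has measure $\sim \eta^2 e^{-k}$, which multiplied by the pointwise gap $\gtrsim e^{k}$ gives $\|\p_t(v_k^\eta-w_k^\eta)(t_k)\|_{L^2}^2\gtrsim \eta^2$; with your parameters this exponential cancellation is not set up, so the conclusion does not follow from what you have written.
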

 To prove this Theorem, we proceed in a similar way as in \cite{sli moh nad2}. Their idea is based on the approximation
 of the solution of the PDE by the solution of its corresponding ODE (without the ``diffusion term"). The special choice of the
 concentrating data combined to the finite speed of propagation guarantee that the two solutions indeed coincide in a
 backward light cone. Then a ``decoherence" type phenomena is shown for the ODE regime given the periodicity of its solutions.
 The local character of the proof of \cite{sli moh nad2} enables us to adapt it in our setting. This strategy was originally
  initiated by Kuksin \cite{K} and developed by Christ-Colliander-Tao \cite{chr-coll-tao}.
\begin{rem}
The results of this paper remain true if we replace the Dirichlet by
Neumann boundary conditions. This is due to the fact that we use
only $u$ and $\p_t u$ as test functions. This considerably simplifies our proof
compared to the 3D quintic problem, where in addition
$x\cdot\nabla u$ is used. That term gave rise of new boundary terms
which needed more care to control. We refer to \cite{BLP} and
\cite{BP} for full details. Also, thanks to Poincar\'e inequality, our results hold in the massive case i.e. the Klein-Gordon equation.
\end{rem}
This paper is organized as follows: in the next section, we
introduce the notation used throughout this paper. Section two is
devoted to the complete proof of our Strichartz estimates. In
section three, we combine the latter estimates with the energy
identity and the sharp logarithmic inequality to establish, through
a standard fixed point argument, the local existence results. In section
four, we focus on the proofs of Theorem \ref{csc} and
Theorem \ref{cc}. In the last section we prove the instability result given by
Theorem \ref{ins}.

\section{Notation}
For $s\geq 0$, let $H^s_D(\Omega)$ be the domain of
$(-\De_D)^{s/2}$ and $H_0^s(\Omega)$ be the closure in $H^s(\Omega)$ of the set of
smooth and compactly supported functions. Note that the space $H^s_D(\Omega)$ coincides
with $H_0^s(\Omega)$ for $0\leq s<\frac 3 2$, and that when
$s=1$, $H_0^1(\Omega)$ equipped with the inner product of
$H^1(\Omega)$ is a Hilbert space.
In this paper, the space $\displaystyle H_0^1(\Omega)$ will be endowed with the Dirichlet norm $\|u\|_{H^1_D(\Omega)}^2=\int_\Omega|\nabla u(x)|^2\;dx$.\\
It is well known that in our setting, the operator $-\De_D$ has a complete set of eigenvalues $\displaystyle\{\lambda_j^2\}_{j=0}^\infty$ and eigenfunctions. Let $m({\lambda_j})$ denote the multiplicity of $\lambda_j^2$, and $e_{\lambda_{j,k}}$ be the $k^{th}$ eigenvector in the eigenspace corresponding to the eigenvalue
$\lambda_j^2$. Then define
$\Pi_{\lambda_j}u=\displaystyle\sum_{k=1}^{m({\lambda_j})}<u,e_{\lambda_{j,k}}>e_{\lambda_{j,k}}$, where $<,>$ stands for the $L^2$ inner product. For any  $\lambda>1$, denote by $\chi_\lambda$ the spectral projection given by
\begin{equation*}
\chi_{\lambda}u=\sum_{\begin{array}{c}\scriptstyle
\{j\;/\;\lambda\leq\lambda_j<\lambda+1\}
\end{array}}\Pi_{\lambda_j}u.
\end{equation*}
Finally, let ${{|D|}}:=\sqrt{-\De_D}$. For any $0<S<T$ and $x_0\in\Omega$, define :

$$
K^T_S(x_0)=\{(x,t)/ |x-x_0|<t\;,\;S<t<T\;,\; x\in \Omega\},\quad \mbox{the backward light cone}
$$

\begin{eqnarray}\label{mst}
M_S^T(x_0)=\{(x,t)/ |x-x_0|=t,\;x\in\Omega,\;S<t<T\}\quad \mbox{its mantle}
\end{eqnarray}
and for fixed $t$
$$
 D_t(x_0)=\{x/ |x-x_0|<t\}\cap \Omega \quad \mbox{its space like sections}.
$$
Observe that

$$
\p K_S^T(x_0)=(([S,T]\times\p\Omega)\cap K^T_S(x_0))\cup D_S\cup D_T\cup
M_S^T(x_0).
$$
Finally, let $e(u)$ be the energy density
\begin{eqnarray}\label{e}
e(u)=(\p_tu)^2+|\nabla u|^2+\frac{e^{4\pi u^2}-1-4\pi u^2}{4\pi}.
\end{eqnarray}
When $x_0=0$, we remove the dependence upon $x_0$ in the above notation. Let $\mathcal E_T$ be the space defined as follows

\begin{equation}\label{epsi}
\mathcal E_T=C([0,T],H_0^1(\Omega))\cap C^1([0,T],L^2(\Omega))\cap
L^8([0,T],C^{1/8}(\Omega)).
\end{equation}
Set
\begin{equation}\label{NT}
\| u\|_T=\displaystyle\sup_{ t\in[0,T]}
 (\| u(t,\cdot)\|_{H_0^1(\Omega)}+\|{\p_{t}u(t,\cdot)}\|_{L^{2}(\Omega)})+
 \| u\|_{L^{8}([0,T],\;C^{1/8}(\Omega))}.
\end{equation}
Recall that $\mathcal E_T$ equipped with the norm $\|\cdot\|_T$ is a
complete space.

\section{Strichartz estimate}
In this section, we prove our appropriate Strichartz estimate
given by Theorem \ref{st1}. The proof follows Burq-Lebeau-Planchon \cite{BLP}. It is based on an estimate in Lebesgue spaces of the spectral projector
$\chi_{\lambda}$. This estimate is due to Smith-Sogge \cite{SS}. First, we recall this estimate in two space dimensions.

\begin{prop}\label{SS1}[Smith-Sogge \cite{SS}]
Let $\Omega\subset\R^2$ be a smooth bounded domain. Then the
following estimate
\begin{equation}\label{sog1}
\|\chi_\lambda u\|_{L^q(\Omega)}\leq C \lambda^{\frac 2 3
(\frac{1}{2}-\frac{1}{q})}\|u\|_{L^2(\Omega)}
\end{equation}
holds for $2\leq q\leq 8$.
\end{prop}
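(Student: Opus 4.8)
\noindent\emph{Proof plan (this is Smith--Sogge's theorem \cite{SS}; I outline their argument).}

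\textbf{Reduce to a single endpoint.} As a function of $1/q$ the exponent $\frac23(\frac12-\frac1q)$ is affine, vanishing at $q=2$ and equal to $\frac14$ at $q=8$. So I would first observe that interpolating the trivial bound $\|\chi_\lambda\|_{L^2(\Omega)\to L^2(\Omega)}\le1$ against the single estimate
\begin{equation*}
\|\chi_\lambda u\|_{L^8(\Omega)}\ \lesssim\ \lambda^{1/4}\,\|u\|_{L^2(\Omega)}
\end{equation*}
recovers \eqref{sog1} for all $2\le q\le8$; hence it is enough to treat $q=8$. Next I would pass from the rough projector to a smoothed one: choose $\rho\in\mathcal S(\R)$ even, with $\widehat\rho$ supported in $(-\delta,\delta)$ and $\rho\ge1$ on $[-1,1]$ (possible once $\delta$ is small). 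Since $\rho(\lambda-|D|)\Pi_{\lambda_j}=\rho(\lambda-\lambda_j)\Pi_{\lambda_j}$ and $\rho(\lambda-\lambda_j)\ge1$ whenever $\lambda_j\in[\lambda,\lambda+1)$, one gets $\chi_\lambda=\rho(\lambda-|D|)\circ T_\lambda$ with $\|T_\lambda\|_{L^2\to L^2}\le1$, so the matter reduces to $\|\rho(\lambda-|D|)\|_{L^2(\Omega)\to L^8(\Omega)}\lesssim\lambda^{1/4}$.

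\textbf{Pass to the wave propagator and localize.} By Fourier inversion, evenness of $\rho$, and the Schwartz bound $\|\rho(\lambda+|D|)\|_{L^2\to L^8}=O(\lambda^{-N})$,
\begin{equation*}
\rho(\lambda-|D|)\ =\ \frac1\pi\int_{-\delta}^{\delta}\widehat\rho(t)\,e^{it\lambda}\cos(t|D|)\,dt\ +\ O(\lambda^{-N}),
\end{equation*}
so the problem becomes a fixed-band estimate for the Dirichlet wave propagator on $\Omega$. Finite speed of propagation confines the kernel to $\{|x-y|\le\delta\}$; I would take $\delta$ below the injectivity radius of the double of $\Omega$ and below the scale on which $\p\Omega$ is essentially flat, and localize with a finitely overlapping partition $1=\sum_\nu\psi_\nu$ into caps of size $\sim\delta$. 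On a cap disjoint from $\p\Omega$, $\cos(t|D|)$ agrees, up to a smoothing error, with the flat Lax--H\"ormander Fourier integral parametrix, and Sogge's boundaryless spectral cluster estimate (proved by Stein complex interpolation of an analytic family built from that parametrix) delivers the $L^2\to L^8$ bound --- exactly $\lambda^{1/4}$ at $q=8$, and a strictly smaller power for $q<8$. Thus the sharpness of \eqref{sog1} is carried entirely by the caps that meet $\p\Omega$.

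\textbf{Handle the boundary caps (the crux).} Near $\p\Omega$ I would pass to Fermi coordinates $(x_1,x_2)$ with $\{x_2=0\}=\p\Omega$ and replace $\cos(t|D|)$ by the Melrose--Taylor diffractive parametrix --- equivalently, after a microlocal reduction, work with the Friedlander model in which $\p\Omega$ is frozen to its osculating paraboloid and the propagator kernel is explicit in terms of the Airy function $\mathrm{Ai}$. After the band cutoff $|D|\sim\lambda$, the boundary piece of $\rho(\lambda-|D|)$ has kernel of the schematic shape
\begin{equation*}
K_\lambda(x,y)\ \sim\ \lambda^2\int e^{i\lambda\Phi(x,y,\xi)}\,a(x,y,\xi;\lambda)\,\mathrm{Ai}\!\big(\lambda^{2/3}\zeta(x,y,\xi)\big)\,d\xi,
\end{equation*}
with $\Phi$ the (folded) distance phase, $a$ a symbol of order $0$, and $\zeta$ the Airy argument, vanishing precisely along the rays tangent to $\p\Omega$. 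Feeding in $\mathrm{Ai}(-s)\sim s^{-1/4}\cos(\tfrac23 s^{3/2}-\tfrac\pi4)$ turns this into an oscillatory integral whose phase degenerates cubically --- whence the scale $\lambda^{2/3}$ --- along the glancing directions, where whispering-gallery concentration in a $\lambda^{-2/3}$-layer forces the loss. Running on these kernels the same complex-interpolation scheme as for the interior caps, now with the extra Airy factor controlling the decay, yields the $L^2\to L^8$ bound $\lambda^{1/4}$ for the boundary piece. Summing over the caps gives $\|\rho(\lambda-|D|)\|_{L^2\to L^8}\lesssim\lambda^{1/4}$, hence the case $q=8$, and the first step then gives the whole range $2\le q\le8$.

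\textbf{Where the difficulty is.} Everything away from the glancing/whispering-gallery set is governed by the standard Euclidean oscillatory-integral calculus and is routine. The real work is (i) constructing and controlling the Melrose--Taylor parametrix --- equivalently, reducing to the solvable Airy model --- uniformly through the glancing region, and (ii) extracting from the Airy-amplitude oscillatory integral the \emph{sharp} power $\lambda^{2/3}$: a crude estimate only recovers the boundaryless exponent, and it is exactly this sharp gain that moves the transition from $q=6$ to $q=8$ and pins the exponent $\tfrac23(\tfrac12-\tfrac1q)$ in \eqref{sog1}.
\endproof
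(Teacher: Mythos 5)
The paper does not prove this proposition at all: it is quoted verbatim as Theorem-level input from Smith--Sogge \cite{SS} (Acta Math.\ 2007), so there is no internal argument to compare yours against. Judged on its own terms, your opening reductions are correct and standard: the interpolation bookkeeping that reduces \eqref{sog1} to the single endpoint $\|\chi_\lambda\|_{L^2\to L^8}\lesssim\lambda^{1/4}$ checks out (the exponent is affine in $1/q$, vanishing at $q=2$ and equal to $1/4$ at $q=8$), the factorization through a smoothed projector $\rho(\lambda-|D|)$, the passage to $\cos(t|D|)$ by Fourier inversion, and the localization by finite speed of propagation are all legitimate, as is the observation that the interior caps are handled by the boundaryless Sogge estimate (whose $q=8$ exponent is also $1/4$, which is exactly why the transition moves to $q=8$).

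The gap is in the step you yourself flag as the crux. For the boundary caps you write down a schematic Airy-type kernel and then assert that ``running the same complex-interpolation scheme\ldots yields the $L^2\to L^8$ bound $\lambda^{1/4}$''; that single sentence is the entire content of the theorem and is not justified. Worse, the route you sketch is not available in the generality claimed: the Melrose--Taylor/Friedlander reduction you invoke presupposes a diffractive (concave) or gliding (convex) boundary, whereas a general smooth bounded $\Omega\subset\R^2$ has boundary portions of both types plus inflection points, and no uniform Airy parametrix through the full glancing set exists there. Smith and Sogge's actual proof avoids constructing any such parametrix: they reflect the metric across $\partial\Omega$ (doubling the domain), which produces a boundaryless problem with a metric that is only Lipschitz/piecewise smooth across the interface, and they then prove the cluster bounds by the wave-packet, angular-decomposition and squarefunction machinery developed for low-regularity coefficients; the $2/3$ in the exponent is exactly the loss known for Lipschitz metrics. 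So while your reductions are sound, the core of the argument is both missing and pointed at the wrong (and likely unworkable, in this generality) parametrix construction; for the purposes of this paper the honest move is the one the authors make, namely to cite \cite{SS} as a black box.
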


\emph{Proof of Theorem \ref{st1}}. In this proof, we distinguish two cases.\\

\textbf{First case : Estimate for the homogeneous problem i.e.  when $f=0$.}\\
In this case, Duhamel's formula gives
\begin{eqnarray*}
u = \cos(t{|D|})u_0+\frac{\sin(t{|D|})}{{|D|}}u_1
\end{eqnarray*}
where
\begin{eqnarray*}
\cos(t{|D|}):=\left(\frac{e^{it
{|D|}}+e^{-it{|D|}}}{2}\right),\quad
{\sin(t{|D|})}:=\left(\frac{e^{it{|D|}}-e^{-it{|D|}}}
{2i}\right)
\end{eqnarray*}
and ${\mathcal L}(t)u_0:=e^{\pm it{|D|}}u_0$ is the solution $u$ of $\partial_tu=\pm i{|D|}u$ and $u(t=0)=u_0$. By Minkowski inequality
$$
\|u\|_{L^8((0,1), C^{1/8}(\Omega))}\leq \|\LL u_0\|_{L^8((0,1),
C^{1/8}(\Omega))}+\|\LL(\frac{u_1}{{|D|}})\|_{L^8((0,1)
C^{1/8}(\Omega))}.
$$
Therefore, we need to estimate $\|\LL u_0\|$ in $L^8((0,1),
C^{1/8}(\Omega))$. \\
\textbf{Step 1 : } We show that
\begin{equation}\label{a1}
\|e^{itA}u_0\|_{L^8((0,2\pi), L^8(\Omega))}\leq C\|u_0\|_{H_D^{\frac
5 8}(\Omega)},
\end{equation}
where $A$ is the ``modified" ${|D|}$ operator with integer
eigenvalues i.e.
$$
A(e_\lambda)=[\lambda]e_\lambda.
$$
The notation $[\cdot]$ stands for the integer part and
$e_{\lambda}$ is an eigenfunction of $-\triangle_D$ associated to the
eigenvalue $\lambda^2$ (Hence an eigenfunction of ${|D|}$ associated to the eigenvalue $\lambda$).\\
\\
Since $u_0$ is in $L^2(\Omega)$, we can write
$$
u_0(x)=\sum_{\lambda\in \sigma(\sqrt{-\triangle_D})}<u_0,
e_\lambda>e_\lambda(x)=:\sum_{\lambda\in
\sigma(\sqrt{-\triangle_D})}u_\lambda e_\lambda(x),
$$
where $\sigma(\sqrt{-\triangle_D})$ denotes the spectrum of $\sqrt{-\triangle_D}$.\\
So,
\begin{eqnarray*}
e^{itA}u_0(x)&=&\sum_{\lambda\in \sigma(\sqrt{-\triangle_D})}e^{itA}u_\lambda e_\lambda(x)\\
&=&\sum_{\lambda\in \sigma(\sqrt{-\triangle_D})}e^{it[\lambda]}u_\lambda e_\lambda(x).\\
\end{eqnarray*}
Setting $k=[\lambda]$, we have
\begin{eqnarray*}
e^{itA}u_0(x)=\sum_{k=1}^{\infty}e^{itk}C_k(x),
\end{eqnarray*}
\\
where the Fourier coefficient $C_k(x)$ is given by
$$C_k(x)=\displaystyle\sum_{\begin{array}{c}\scriptstyle\lambda\in
\sigma
(\sqrt{-\triangle_D})\\
\scriptstyle k\leq \lambda<k+1
\end{array}}u_\lambda e_\lambda(x).$$
Thanks to the 1D Sobolev embedding, $H^{\frac{1}{2}-\frac 1
q}(0,2\pi)\hookrightarrow L^q(0,2\pi)$ for all $q\geq 2$, we have
\begin{eqnarray*}
\|e^{itA}u_0\|_{L^8(\Omega,L^8(0,2\pi))}^2
&\lec& (\int_\Omega \|e^{itA}u_0(x)\|_{H^{\frac 3 8}(0,2\pi)}^8\;dx)^{1/4}\\
&\lec& \left\|\;\|e^{itA}u_0(x)\|_{H^{\frac 3
8}(0,2\pi)}^2\right\|_{L^{4}(\Omega)}.
\end{eqnarray*}
Then, Parseval's formula gives
\begin{eqnarray*}
\|e^{itA}u_0(x)\|_{H^{\frac 3 8}(0,2\pi)}^2&=&\sum_{k\geq1}(1+k)^{3/4}\|e^{itk}C_k(x)\|_{L^2(0,2\pi)}^2\\
&\lec&  \sum_{k\geq1}(1+k)^{3/4}|C_k(x)|^2.
\end{eqnarray*}
Now applying Minkowski inequality and using estimate \eqref{sog1},
we obtain
\begin{eqnarray*}
\|e^{itA}u_0\|_{L^8(\Omega,L^8(0,2\pi))}^2&\lec& \left\|\sum_{k\geq1}(1+k)^{3/4}
|C_k(x)|^2\right\|_{L^{4}(\Omega)}\\
&\lec& \sum_{k\geq1}(1+k)^{3/4}\|C_k\|_{L^8(\Omega)}^2=C\sum_{k\geq1}(1+k)^{3/4}\|\chi_k u_0\|_{L^8(\Omega)}^2\\
&\lec& \sum_{k\geq1}(1+k)^{\frac 3 4}k^{\frac 1 2}\|\chi_k u_0\|_{L^2(\Omega)}^2\\
&\lec& \sum_{k\geq1}(1+k)^{\frac 5
4}\sum_{\begin{array}{c}\scriptstyle\lambda\in \sigma
(\sqrt{-\triangle_D})\\
\scriptstyle k\leq \lambda<k+1
\end{array}}|u_{\lambda}|^2\sim \|u_0\|_{H_D^{\frac 5 8}(\Omega)}^2,
\end{eqnarray*}
which gives
\begin{eqnarray*}
\|e^{itA}u_0\|_{L^8(\Omega,L^8(0,2\pi))}^2\lec \|u_0\|_{H_D^{\frac 5
8}(\Omega)}^2
\end{eqnarray*}
as desired.

\textbf{Step 2 :} We prove \eqref{a1} for the operator $\LL$.
\begin{equation}\label{a2}
\|\LL u_0\|_{L^8((0,2\pi), L^8(\Omega))}\leq C\|u_0\|_{H_D^{\frac 5
8}(\Omega)}.
\end{equation}

Let $v=e^{it{|D|}}u_0$. It is clear that $v$ satisfies
\begin{equation*}\left\{
                                                         \begin{array}{ll}
                                                           (\p_t-iA)v=(-iA+i{|D|})v \\
                                                           v|_{t=0}=u_0,
                                                         \end{array}
                                                       \right.
                                                       \end{equation*}
and according to Duhamel's formula
$$
v(t,x)=e^{itA}u_0(x)+\int_0^te^{i(t-s)A}(-iA+i{|D|})v(s,x)\;ds.
$$
So, using H\"older in the second estimate
\begin{eqnarray*}
\|v(\cdot,x)\|_{L^8(0,2\pi)}&\leq& \|e^{itA}u_0(x)\|_{L^8(0,2\pi)}+
\|\int_0^te^{i(t-s)A}(-iA+i{|D|})v(s,x)\;ds\|_{L^8(0,2\pi)}\\
&\lec& \|e^{itA}u_0(x)\|_{L^8(0,2\pi)}+
(\int_0^{2\pi}\int_0^{2\pi}|e^{i(t-s)A}(-iA+i{|D|})v(s,x)|^8\;ds\;dt)^{1/8}\\
&\lec& (\|e^{itA}u_0(x)\|_{L^8(0,2\pi)}+
(\int_0^{2\pi}\|e^{i(t-s)A}(-iA+i{|D|})v(s,x)\|_{L^8(0,2\pi)}^8\;ds)^{1/8}.
\end{eqnarray*}
Applying \eqref{a1}
\begin{eqnarray*}
\|v\|_{L^8(\Omega, L^8(0,2\pi))}&\lec& \left(\|u_0\|_{H_D^{\frac 5
8}(\Omega)}+(\int_{\Omega}\int_0^{2\pi}\|e^{i(t-s)A}(-iA+i{|D|})v(s,x)\|_{L^8(0,2\pi)}^8\;ds\;dx)
^{1/8}\right)\\
&\lec& \left(\|u_0\|_{H_D^{\frac 5
8}(\Omega)}+(\int_0^{2\pi}\|e^{i(t-s)A}(-iA+i{|D|})v(s)\|_{L^8(\Omega,L^8(0,2\pi))}^8\;ds)
^{1/8}\right).\\
\end{eqnarray*}
 Since the operator $A-{|D|}$ is bounded on $H_D^{\frac 5 8}$, then
\begin{eqnarray*}
\|v\|_{L^8(\Omega, L^8(0,2\pi))}&\lec& \|u_0\|_{H^{\frac 5
8}(\Omega)}
+(\int_0^{2\pi}\|(A-{|D|})v(s)\|^8_{H_D^{\frac 5 8 }(\Omega)}\;ds)^{1/8}\\
&\lec& \|u_0\|_{H_D^{\frac 5 8 }(\Omega)}+ (\int_0^{2\pi}\|v(s)\|^8_{H_D^{\frac 5 8 }(\Omega)}\;ds)^{1/8}\\
&\lec& \|u_0\|_{H_D^{\frac 5 8}}+ C(\int_0^{2\pi}\|u_0\|^8_{H_D^{\frac 5 8 }(\Omega)}\;ds)^{1/8}\\
&\lec&\|u_0\|_{H_D^{\frac 5 8}(\Omega)},
\end{eqnarray*}
where we used $\displaystyle\sup_{t\in[0,2\pi]}\|v(t)\|_{H_D^{\frac 5 8 }(\Omega)}=
\|e^{it{|D|}}u_0\|_{H_D^{\frac 5 8 }(\Omega)}\leq C\|u_0\|_{H_D^{\frac 5 8 }(\Omega)}$ in the last inequality.\\
As a consequence, we obtain \eqref{a2} as desired.

\textbf{Step 3 : } We show that
$$
\|u\|_{L^8((0,1), C^{1/8}(\Omega))}\lec
(\|u_0\|_{H^1_D(\Omega)}+\|u_1\|_{L^2(\Omega)}).
$$
Recall the following elliptic regularity result:
\begin{eqnarray*}
-\triangle_D u+u=g \in L^q(\Omega)\,\mbox{and
}u\mid_{\p\Omega}=0\Rightarrow u\in W^{2, q}(\Omega)\cap
W_0^{1,q}(\Omega)\;\mbox{and } \|u\|_{W^{2,q}(\Omega)}\lec
\|g\|_{L^q(\Omega)}.
\end{eqnarray*}
Assuming $u_0\in{\mathcal C}_0^\infty(\Omega)$, we have for almost
all $t$
$$-\triangle_D{\mathcal L}(t)u_0+{\mathcal L}(t)u_0={\mathcal L}(t)(-\triangle_D u_0+u_0)\in L^8(\Omega).
$$
Thus
\begin{eqnarray*}
\|{\mathcal L}(t) u_0\|_{W^{2,8}(\Omega)}&\lec& \|-\triangle_D({\mathcal L}(t) u_0)+{\mathcal L}(t) u_0\|_{L^8(\Omega)}\\
&\lec&
(\|{\mathcal L}(t) (\triangle_D u_0)\|_{L^8(\Omega)}+\|{\mathcal L}(t) u_0\|_{L^8(\Omega)}),\\
\end{eqnarray*}
and therefore
\begin{eqnarray*}
\|\LL u_0\|_{L^8((0,1),W^{2,8}(\Omega)} \lec(\|\LL (\triangle_D
u_0)\|_{L^8((0,1),L^8(\Omega))}+\|\LL
u_0\|_{L^8((0,1),L^8(\Omega))}).
\end{eqnarray*}
Applying \eqref{a2} to $\triangle_D u_0$ we obtain
\begin{eqnarray*}
\|\LL (\triangle_D u_0)\|_{L^8((0,1), L^8(\Omega))}\leq \|\LL
(\triangle_D u_0)\|_{L^8((0,2\pi), L^8(\Omega))} \lec \|\triangle_D
u_0\|_{H_D^{\frac 5 8}(\Omega)}\lec \|u_0\|_{H_D^{\frac {21}
8}(\Omega)}.
\end{eqnarray*}
Consequently
\begin{eqnarray}\label{in2}
\|\LL u_0\|_{L^8((0,1),W^{2,8}(\Omega)} \lec \|u_0\|_{H_D^{\frac
{21} 8}(\Omega)}.
\end{eqnarray}
Applying the complex interpolation to \eqref{a2} and \eqref{in2}
with $\theta=\frac{3}{16}$, we have

\begin{eqnarray}\label{in3}
\|\LL u_0\|_{L^8((0,1),W^{\frac 3 8,8}(\Omega))}\lec
\|u_0\|_{H_D^1(\Omega)}.
\end{eqnarray}
Now, by Sobolev embedding, we have for all $p\leq 8$
$$
W^{\frac1 8+\frac 2 p, 8}(\Omega)\hookrightarrow C^{\frac 2 p-\frac 1
8}(\Omega)\hookrightarrow C^{\frac 1 8}(\Omega).
$$
Thus, we can rewrite \eqref{in3} as
\begin{eqnarray}\label{con}
\|\LL u_0\|_{L^8((0,1), C^{1/8}(\Omega))}\lec
\|u_0\|_{H^1_D(\Omega)}
\end{eqnarray}
which implies that
\begin{eqnarray*}
\|u\|_{L^8(0,1), C^{1/8}(\Omega))}\lec
(\|u_0\|_{H^1_D(\Omega)}+\|\frac{u_1}{{|D|}}\|_{H^1_D(\Omega)}).
\end{eqnarray*}
Finally, we use the fact that ${{|D|}}^{-1}$ is an isometry from
$L^2(\Omega)$ to $H_0^1(\Omega)$ to conclude that
$$
\|u\|_{L^8((0,1), C^{1/8}(\Omega))}\lec
(\|u_0\|_{H^1_D(\Omega)}+\|u_1\|_{L^2(\Omega)}).
$$

\textbf{Second case: An arbitrary $f\in L^1(L^2).$}\\

Thanks to Duhamel's formula
$$
u(t,x)=\cos(t{|D|})u_0+\sin(t{|D|}){{|D|}}^{-1}u_1
+\int_0^t\sin{((t-s){|D|})}{{|D|}}^{-1}f(s)\;ds.
$$
Applying the result of the first case, we obtain
\begin{eqnarray*}
\|u\|_{L^8((0,1), C^{1/8}(\Omega))} &\lec &
\left(\|u_0\|_{H_0^1(\Omega)}+\|u_1\|_{L^2(\Omega)}\right)\\
&+&\int_0^t\|\sin{((t-s){|D|})}{{|D|}}^{-1}f(s)\|_{L^8((0,1), C^{1/8}(\Omega))}\;ds\\
&\lec&
\left(\|u_0\|_{H_0^1(\Omega)}+\|u_1\|_{L^2(\Omega)}\right)+\int_0^1 \|{{|D|}}^{-1}f(s)\|_{H^1_0(\Omega)}\;ds\quad(\mbox{by }\eqref{con})\\
&\lec&
  \left(\|u_0\|_{H_0^1(\Omega)}+\|u_1\|_{L^2(\Omega)}+\int_0^1 \|f(s)\|_{L^2(\Omega)}\;ds\right)\\
&\lec&
\left(\|u_0\|_{H_0^1(\Omega)}+\|u_1\|_{L^2(\Omega)}+\|f\|_{L^1((0,1),L^2(\Omega))}\right).
\end{eqnarray*}
This completes the proof of Theorem \ref{st1}.
\endproof

Now we show that in the supercritical case, it is impossible to
estimate the nonlinear term in any dual Strichartz norm. Our result
stands for the solutions to the free  wave equation which is the first iteration in any iterative scheme for
the nonlinear problem. We emphasize on the fact that the linear energy is less than one, and the nonlinear one is slightly bigger than one (supercritical). More precisely we have

\begin{prop}
\label{supercriticalST}For any $\delta>0,$ there exists a sequence $(v_k)$ of solutions
of the free wave equation such that we have
\begin{equation}
\label{energysize} \|\nabla
v_k(0)\|_{L^2}^2+\|v_k(0)\|_{L^2}^2+\|{\partial_t
v}_k(0)\|_{L^2}^2<1,\quad\mbox{and}\quad    E(v_k, t=0)\leq 1+\delta
\end{equation}
for $k$ large, while for any $T>0$, any $p,q\in[1,\infty]$ satisfying $\frac 1 p+
\frac 2 q\leq2$
\begin{equation}
\|f(v_k)\|_{L^p((0,T),L^q(\Omega))}\geq C_{\delta}\sqrt k.
\end{equation}
\end{prop}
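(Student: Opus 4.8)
\emph{Proof proposal.} The plan, following \cite{sli moh nad2}, is to take $v_k$ to be the free evolution of a concentrating Moser-type profile and to exploit finite speed of propagation: on a tiny backward light cone the solution is \emph{frozen} at an exponentially large constant, and the dimensions of that cone will be tuned to exactly compensate the exponential, leaving the polynomial growth $\sqrt k$.

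Concretely, I would fix a point (say the origin) in the interior of $\Omega$, a small parameter $R=R(\delta)\in(0,1)$ with $\overline{B(0,R)}\subset\Omega$, and set $\rho_k=Re^{-k}$ and $a_k=\sqrt{1-1/k}$. Define
\begin{equation*}
u_0^{(k)}(x)=\frac{a_k}{\sqrt{2\pi}}\begin{cases}\sqrt{k}&\text{if }|x|\le\rho_k,\\[1mm] \dfrac{\log(R/|x|)}{\sqrt{k}}&\text{if }\rho_k\le|x|\le R,\\[1mm] 0&\text{if }|x|\ge R,\end{cases}
\end{equation*}
which is Lipschitz, supported in $B(0,R)$, hence lies in $H_0^1(\Omega)\cap\dot C^{1/8}(\Omega)$, and satisfies $\|\nabla u_0^{(k)}\|_{L^2}^2=a_k^2$ by a one-line polar-coordinates computation. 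Put $v_k:=\cos(t{|D|})u_0^{(k)}$, i.e. the solution of \eqref{cf} with $f=0$ and data $(u_0^{(k)},0)$. Since $\|v_k(0)\|_{L^2}^2=O(R^2/k)\to0$ and $\p_tv_k(0)=0$, the first condition in \eqref{energysize} holds for $k$ large. For the second, I split the integral in \eqref{enr} over $B(0,R)$ into the core $\{|x|\le\rho_k\}$, where $u_0^{(k)}$ is the constant $\frac{a_k\sqrt k}{\sqrt{2\pi}}$ so that $e^{4\pi(u_0^{(k)})^2}=e^{2a_k^2k}=e^{2k-2}$ and multiplying by the area $\pi\rho_k^2=\pi R^2e^{-2k}$ gives a contribution $O(R^2)$, and the annulus $\{\rho_k\le|x|\le R\}$, which contributes $O(R^2)$ too after the substitution $|x|=Re^{-s}$, $s\in(0,k)$, using $\int_0^k(e^{2a_k^2s^2/k}-1-\tfrac{2a_k^2s^2}{k})e^{-2s}\,ds=O(1)$. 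Hence $E(v_k,0)\le a_k^2+CR^2<1+\delta$ once $R=R(\delta)$ is chosen small; in fact $E(v_k,0)>1$ for $k$ large, so $v_k$ is genuinely supercritical.

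For the lower bound, let $\mathcal C_k=\{(t,x):t>0,\ |x|+t<\rho_k\}$, which for $k$ large is contained in $\R_t\times\Omega$ and away from $\p\Omega$. By finite speed of propagation, on $\mathcal C_k$ the solution $v_k$ coincides with the free $\R^2$-wave evolution of $u_0^{(k)}$; since the latter is the \emph{constant} $\frac{a_k\sqrt k}{\sqrt{2\pi}}$ on $B(0,\rho_k)\supset B(x,t)$, we get $v_k\equiv\frac{a_k\sqrt k}{\sqrt{2\pi}}$ on $\mathcal C_k$. Therefore $f(v_k)$ equals the constant $\frac{a_k\sqrt k}{\sqrt{2\pi}}(e^{2a_k^2k}-1)\gtrsim\sqrt k\,e^{2k}$ there, and for any $T>0$ (with $\rho_k<T$ for $k$ large) and any $p,q\in[1,\infty]$ with $\tfrac1p+\tfrac2q\le2$,
\begin{equation*}
\|f(v_k)\|_{L^p((0,T),L^q(\Omega))}\ \gtrsim\ \sqrt k\,e^{2k}\,\big\|\mathbf 1_{\mathcal C_k}\big\|_{L^p_tL^q_x},
\end{equation*}
while $\|\mathbf 1_{\mathcal C_k}\|_{L^p_tL^q_x}=\big(\int_0^{\rho_k}(\pi(\rho_k-t)^2)^{p/q}\,dt\big)^{1/p}=\frac{\pi^{1/q}}{(2p/q+1)^{1/p}}\,\rho_k^{\frac2q+\frac1p}\ge e^{-2/e}\,\rho_k^{\frac2q+\frac1p}$ (obvious reading when $p$ or $q$ is $\infty$; one uses $1\le 2p/q+1\le 2p$ and $\max_{p\ge1}\tfrac{\log 2p}{p}=\tfrac2e$). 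Since $\rho_k=Re^{-k}$ and $\tfrac1p+\tfrac2q\le2$ we have $\rho_k^{\frac2q+\frac1p}\ge R^2e^{-2k}$, so the factors $e^{2k}$ cancel and $\|f(v_k)\|_{L^p((0,T),L^q(\Omega))}\gtrsim R^2\sqrt k=C_\delta\sqrt k$, uniformly in $(p,q)$ and $T$.

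The main obstacle is the simultaneous tuning of the three parameters: forcing $f(v_k)=v_k e^{4\pi v_k^2}-v_k$ to be large pushes $v_k$ to be as tall as the Moser--Trudinger inequality (Proposition \ref{MT}) allows, i.e. of height comparable to $\sqrt{\log(\text{scale ratio})}$, yet the linear energy must stay below $1$ and the nonlinear one below $1+\delta$. The resolution is to take an \emph{exponentially small} inner scale $\rho_k=Re^{-k}$ but a \emph{fixed small} outer scale $R$: this produces height $\sim\sqrt k$, keeps the nonlinear energy $O(R^2)\le\delta$, and makes the space--time core on which $v_k$ is frozen at its peak have extent $\rho_k$ in each of $t$ and $x$, so that the volume factor $\rho_k^{1/p+2/q}\le\rho_k^2\sim e^{-2k}$ exactly annihilates the peak factor $e^{4\pi\|v_k\|_\infty^2}\sim e^{2k}$, leaving precisely the $\sqrt k$ from the logarithmic height. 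The only other delicate point is the uniformity of the implied constants over the admissible range of $(p,q)$, which is handled by the elementary bound $(2p/q+1)^{-1/p}\ge e^{-2/e}$ recorded above.
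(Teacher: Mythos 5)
Your construction is correct and is essentially the paper's own argument: a concentrating Moser-type profile whose free evolution is frozen, by finite speed of propagation, at height $\sim\sqrt k$ on a backward light cone of radius comparable to the inner scale, with the condition $\frac1p+\frac2q\le 2$ making the cone's space--time volume factor cancel the exponential peak $e^{4\pi\|v_k\|_{L^\infty}^2}$. The only differences are cosmetic reparametrizations: you take a small outer radius $R(\delta)$ with scale ratio $e^{k}$ and compute the nonlinear energy directly by a core/annulus splitting, whereas the paper dilates by a large $a(\delta)$ with scale ratio $e^{k/2}$ and bounds the nonlinear energy via the quantitative Trudinger--Moser inequality.
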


\begin{proof}
Without loss of generality, we may assume that $0\in\Omega$. Let $\delta>0$, and choose $p$, $q$ such that
$$
\frac 1 p+\frac 2 q\leq2.
$$
For any
$k\geq 1$, let $v_k$ be the solution of the free wave equation with data
\begin{eqnarray*}
v_k(0,x)=(1-\frac{2a}{k})f_k(ax)\quad\mbox{and}\quad
\partial_t{v_k}(0,x)=0,
\end{eqnarray*}
where $a>1$ to be chosen in the sequel. The functions $f_k$ are defined by
\begin{eqnarray}
\label{lesfk}
f_k(x)=\left\{
  \begin{array}{ll}

    0\quad\quad\quad \mbox{if}\quad|x|\geq 1\\
\\
  \displaystyle -\frac{\log|x|}{\sqrt{k\pi}}\quad\quad\quad \mbox{if}\quad  e^{-k/2}\leq|x|\leq 1  \\
\\

   \displaystyle \sqrt{\frac{k}{4\pi}}\quad\quad\quad \mbox{if}\quad |x|\leq e^{-k/2},
  \end{array}
\right.
\end{eqnarray}
and were introduced in \cite{M} to show the optimality of the
exponent $4\pi$ in Trudinger-Moser inequality.\\ Let $a_1>1$ be sufficiently large such that the ball
$B(0,1/a_1)\subset\Omega$. For $a>a_1$ we have,
\begin{eqnarray*}
\|\nabla v_k(0)\|^2_{L^2(\Omega)}=\frac 2
k(1-\frac{2a}{k})^2\int_{\frac{1}{a}e^{-k/2}}^{\frac 1
a}\frac{dr}{r}= (1-\frac{2a}{k})^2
\end{eqnarray*}
and
\begin{eqnarray*}
\| v_k(0)\|^2_{L^2(\Omega)}&=&(1-\frac{2a}{k})^2\left[\frac 2
{ka^2}\int_{e^{-k/2}}^{1} r\log ^2 r\;dr+\frac k 2\int_{0}^{\frac 1
a e^{-k/2}} r\;dr\right]\\
&\leq& \frac{1}{2ka^2}(1-\frac{2a}{k})^2.
\end{eqnarray*}
As  $v_k(0,x)$ can be extended (by zero outside its support) as an $H^1(\R^2)$, then the Trudinger-Moser inequality:
$$ \label{TM}
  \|\nabla\varphi\|_{L^2(\R^2)}< {1} \implies \int_{\R^2} (e^{4\pi|\varphi|^2}-1) dx \lec \frac{\|\varphi\|_{L^2(\R^2)}^2}{1-\|\nabla\varphi\|_{L^2(\R^2)}^2}
$$
shows that
$$
\int_{\Omega}(e^{4\pi v^2_k(0,x)}-1)\;dx\leq \int_{\R^2}(e^{4\pi v_k(0,x)^2}-1)\;dx\leq \frac{C}{a^3},
$$
for an absolute constant $C$. Therefore, we can choose $a_2>a_1$ such that $\frac{C}{a_2^3}\leq\delta$. Thus,  for $a\geq a_2$ and $k$ large enough we have
$$
\| v_k(0)\|^2_{L^2(\Omega)}+\|\nabla
v_k(0)\|^2_{L^2(\Omega)}+\int_{\Omega}(e^{4\pi v^2_k(0)}-1)\;dx\leq
1+\delta,
$$
and  \eqref{energysize}  follows. Next, by the finite speed of propagation, we know that
$$
v_k(t,x)=(1-\frac {2a}{k})\sqrt{\frac{k}{4\pi}}
$$
for any $(t,x)$ in the backward light cone
$$
K_0^k:=\{(x,t):\;0\leq t\leq\frac{e^{-k/2}}{a}\quad\mbox{and}\quad
|x|\leq \frac{ e^{\frac{-k}{2}}}{a}-t\}.
$$
Thus for $k$ large enough (eventually with respect to $a$)

\begin{eqnarray*}
v_k (e^{4\pi v_k^2}-1)&\geq& (1-\frac
{2a}{k})\sqrt{\frac{k}{4\pi}}\big(\exp{\Big((1-\frac{2a}{k})^2k\Big)}-1\big)\\
&\geq& C \sqrt ke^k.
\end{eqnarray*}
Now choosing $k$ larger  so that $e^{-k/2}\leq T$, we have the estimate
\begin{eqnarray*}
\|v_k (e^{4\pi v_k^2}-1)\|_{L^p((0,T),L^q(\Omega))}&\geq& \|v_k( e^{4\pi
v_k^2}-1)\|_{L^p((0,\frac 1 a e^{\frac{-k}{2}}),L^q(|x|\leq \frac 1 a
e^{\frac{-k}{2}}-t))}\\
&\geq& C\sqrt{k}e^k\left(\frac{e^{-k/2}}{a}\right)^{\frac 2 q+\frac 1 p}\\
&\geq& C\frac{\sqrt{k}}{a^2},\\
\end{eqnarray*}
where we used the fact that $\frac 2 q+\frac 1 p\leq2$ in the third inequality.
\end{proof}

\section{The local existence}
In this section, we prove Theorem \ref{loc}. We start by giving two
Lemmas. The first one provides the nonlinear estimate needed for the
fixed point argument. The second one will be used to show the
unconditional uniqueness result\footnote{Uniqueness in the
energy space and not in just a subspace of it.}. The $\R^2$-counter parts of these Lemmas can be found
in \cite{sli moh nad1}.
\begin{lem}
\label{lem1}
 Fix a time $T>0$ and $0<A<1$, and denote by $f(u)=u(e^{4\pi u^2}-1)$. There exists $0<\gamma=\gamma(A)<8$ such that if
 $$
 u_1 \;,\; u_2\;  \in C([0 , T] ,
H_0^1(\Omega))\cap C^1([0 , T] , L^2(\Omega))\cap
L^8([0,T],C^{1/8}(\Omega))
$$

and
 $$
 \displaystyle\sup_{ t\in[0,T]}
 \Vert\nabla u_i(t,\cdot)\Vert_{L^{2}({\Omega})}\leq
 A \quad\quad i=1,2,
 $$
then \\

 $$
 {\| f(u_1)-f(u_2)\|}_{{L^1_T}(L^{2}(\Omega))}\leq c{\| u_1-u_2\|}_{T}\left(
 T+T^{1-\frac{\gamma}{8}}\left(\left(\displaystyle\frac{{\Vert u_1\Vert}_T}{A}\right)^{\gamma}+\left(\displaystyle\frac{{\Vert u_2\Vert}_T}{A}\right)^{\gamma}\right)\right),
 $$
 where the norm $\|\cdot\|_T$ is defined by \eqref{NT}.
\end{lem}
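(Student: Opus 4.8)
The goal is a Lipschitz-type estimate for the nonlinearity $f(u)=u(e^{4\pi u^2}-1)$ in $L^1_T L^2_x$ under the hypothesis that the gradients of $u_1,u_2$ stay below $A<1$. The plan is to start from the pointwise algebraic inequality
\begin{equation*}
|f(u_1)-f(u_2)|\lec |u_1-u_2|\,\big(1+(u_1^2+u_2^2)\,e^{C(u_1^2+u_2^2)}\big),
\end{equation*}
which follows from the mean value theorem applied to $s\mapsto f(s)$, since $f'(s)=(1+8\pi s^2)e^{4\pi s^2}-1$ and $e^{4\pi u_1^2}+e^{4\pi u_2^2}\lec e^{C(u_1^2+u_2^2)}$. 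Taking $L^2(\Omega)$ norms in $x$ and then $L^1$ in $t$, the term coming from the ``$1$'' contributes $\lec T\,\|u_1-u_2\|_T$ after using $\sup_t\|u_i(t)\|_{L^\infty}\lec\sup_t\|u_i(t)\|_{C^{1/8}}$ and $\sup_t\|u_i\|_{C^{1/8}}\lec\|u_i\|_T$... more carefully, one pulls $\|u_1-u_2\|_{L^\infty}$ out of the $x$-integral, bounds it by $\|u_1-u_2\|_{C^{1/8}}\leq\|u_1-u_2\|_T$, and integrates in $t$.

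The essential term is $\big\||u_1-u_2|\,(u_1^2+u_2^2)e^{C(u_1^2+u_2^2)}\big\|_{L^1_T L^2_x}$. First pull the $L^\infty_x$ norm of $u_1-u_2$ out in $x$, leaving $\|u_1-u_2(t)\|_{L^\infty}\,\big\|(u_1^2+u_2^2)e^{C(u_1^2+u_2^2)}\big\|_{L^2_x}$. To control the remaining $L^2_x$ factor I would use a Moser--Trudinger argument: write $e^{C(u_1^2+u_2^2)}$ and split the exponent as $C u_i^2 = (C\|u_i(t)\|_{L^\infty}^2/\|\nabla u_i(t)\|_{L^2}^2)\,\|\nabla u_i(t)\|_{L^2}^2\,(u_i/\|u_i(t)\|_{L^\infty})^2$, so that Proposition~\ref{MT} (with exponent $4\pi$, legitimate once we absorb the extra polynomial factor $(u_1^2+u_2^2)$ into the exponential at the cost of a slightly larger constant) gives a bound of the form
\begin{equation*}
\big\|(u_1^2+u_2^2)e^{C(u_1^2+u_2^2)}\big\|_{L^2(\Omega)}\lec \exp\!\Big(c\,\|u_1(t)\|_{L^\infty}^2+c\,\|u_2(t)\|_{L^\infty}^2\Big)\cdot(\text{lower order}).
\end{equation*}
Here is the point where the sharp logarithmic inequality, Proposition~\ref{elog}, enters: applying \eqref{il} with a fixed $\lambda>4/\pi$ gives
\begin{equation*}
\exp\!\big(c\,\|u_i(t)\|_{L^\infty}^2\big)\lec \Big(C_\lambda+\frac{\|u_i(t)\|_{\dot C^{1/8}}}{\|\nabla u_i(t)\|_{L^2}}\Big)^{c\lambda\|\nabla u_i(t)\|_{L^2}^2}\lec \Big(1+\frac{\|u_i(t)\|_{C^{1/8}}}{A}\Big)^{\gamma}
\end{equation*}
provided we choose $\lambda$ close enough to $4/\pi$ and set $\gamma=c\lambda A^2$; since $A<1$ and $c\lambda$ is a fixed constant (coming from $C$ above), one can arrange $0<\gamma=\gamma(A)<8$ — this is the crucial place where $A<1$ is used, matching the statement.

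\textbf{Conclusion and main obstacle.} Combining the two pieces, the essential term is bounded by
\begin{equation*}
\lec \int_0^T \|u_1-u_2(t)\|_{C^{1/8}}\,\Big(\big(1+\tfrac{\|u_1(t)\|_{C^{1/8}}}{A}\big)^{\gamma}+\big(1+\tfrac{\|u_2(t)\|_{C^{1/8}}}{A}\big)^{\gamma}\Big)\,dt,
\end{equation*}
and a H\"older inequality in $t$ with exponents $\tfrac{8}{\gamma}$ and $\tfrac{8}{8-\gamma}$ (using $\gamma<8$) on the product $\|u_1-u_2\|_{C^{1/8}}\cdot(\cdots)$, together with $\|u_i\|_{L^8_T C^{1/8}}\leq\|u_i\|_T$ and $\|u_1-u_2\|_{L^8_T C^{1/8}}\leq\|u_1-u_2\|_T$, produces exactly the factor $T^{1-\gamma/8}\big((\|u_1\|_T/A)^\gamma+(\|u_2\|_T/A)^\gamma\big)$; the ``$T$'' term from the first piece is already in the right form. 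Adding the two gives the claimed inequality. The main obstacle is the bookkeeping in the Moser--Trudinger step: one must carefully absorb the polynomial prefactor $(u_1^2+u_2^2)$ and the cross terms in $C(u_1^2+u_2^2)$ (via $2ab\le a^2+b^2$ to decouple $u_1,u_2$) into exponentials of the form $e^{\alpha u_i^2}$ with $\alpha\leq 4\pi$ after rescaling, and then track how the constants feed into $\gamma$ so that the strict inequality $\gamma<8$ survives for all $A<1$ — this is precisely why the logarithmic inequality with \emph{sharp} constant $4/\pi$ (rather than a cruder Moser--Trudinger bound) is needed, since a larger constant would force $\gamma\geq 8$ near $A=1$.
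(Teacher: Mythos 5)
There is a genuine gap at the very first step, and it propagates into the quantitative heart of the lemma. After the mean value theorem the exponential that appears is $e^{4\pi u_\theta^2}$ with $u_\theta=(1-\theta)u_1+\theta u_2$ a \emph{pointwise convex combination}, so that $u_\theta$ is a single function still satisfying $\|\nabla u_\theta\|_{L^2}\leq A$. You immediately replace this by $e^{C(u_1^2+u_2^2)}=e^{Cu_1^2}\,e^{Cu_2^2}$ with $C\geq 4\pi$ and plan to decouple via $2ab\leq a^2+b^2$. This doubles the effective exponent exactly where it hurts: on the set where $|u_1|\approx|u_2|$ are both large, $u_1^2+u_2^2\approx 2u_\theta^2$. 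Concretely, after squaring for the $L^2_x$ norm and applying Cauchy--Schwarz to separate the product $e^{Cu_1^2}e^{Cu_2^2}$, the Moser--Trudinger step (Proposition \ref{MT}) requires an exponent $\alpha\|\nabla u_i\|_{L^2}^{-2}$-rescaled condition that forces roughly $2A^2<1$ rather than $A^2<1$; and even if you peel off as much $\|u_i\|_{L^\infty}^2$ as possible before invoking Moser--Trudinger, the resulting exponent from the logarithmic inequality \eqref{il} is (in the sharp limit $\lambda\to 4/\pi$) of order $16A^2$ per factor --- and the product over $i=1,2$ must still be converted to a sum, doubling again. So your $\gamma$ cannot be made $<8$ for $A$ near $1$, which is precisely the regime the lemma must cover for the critical theory. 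Your own closing remark that a loss of constant ``would force $\gamma\geq 8$ near $A=1$'' correctly identifies the danger, but the decoupling $u_\theta^2\leq u_1^2+u_2^2$ is itself such a loss.

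The fix, which is what the paper does, is to never decouple inside the exponential: keep $u_\theta$ as a single function, bound $(1+8\pi u_\theta^2)e^{4\pi u_\theta^2}-1\leq C_\varepsilon(e^{4\pi(1+\varepsilon)u_\theta^2}-1)$ to absorb the polynomial prefactor, apply H\"older in $x$, peel off $e^{4\pi(1+\varepsilon)\|u_\theta(t)\|_{L^\infty}^2}$, apply Moser--Trudinger to $u_\theta$ alone with $(1+\varepsilon)(1+\zeta)A^2<1$, and apply the log inequality to $u_\theta$ alone, yielding the single factor $\bigl(C_\lambda+\|u_\theta(t)\|_{C^{1/8}}/A\bigr)^{\gamma}$ with $\gamma=2\pi\lambda(1+\varepsilon)A^2<8$. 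Only at the very end does one split into the claimed sum, using $\|u_\theta\|_{C^{1/8}}\leq\|u_1\|_{C^{1/8}}+\|u_2\|_{C^{1/8}}$ and $(a+b)^\gamma\lesssim a^\gamma+b^\gamma$, followed by your (correct) H\"older-in-time step with exponents $8/\gamma$ and $8/(8-\gamma)$. The rest of your outline (the treatment of the ``$1$'' term, the role of $A<1$ and of the sharp constant $4/\pi$, and the final time integration) matches the paper.
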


\begin{proof} Thanks to the mean value theorem we can write
$$
u_1(e^{4\pi
u_1^2}-1)-u_2(e^{4\pi u_2^2}-1)=u[(1+8\pi u_{\theta}^2)e^{4\pi
u_{\theta}^2}-1]
$$
with $u=u_1-u_2$ and $0\leq\theta\leq1$.  Set $u_{\theta}=(1-\theta)u_1+\theta u_2$. We have
$$
\sup_{t\in[0,T]}\Vert |D|
u_{\theta}(t,\cdot)\Vert_{L^{2}({\Omega})}\leq
 A.
$$
So
\begin{eqnarray*}
\left\|f(u_1)-f(u_2)\right\|_{L^1_T(L^2(\Omega))}
=\left\|u[(1+8\pi u_{\theta}^2)e^{4\pi
u_{\theta}^2}-1]\right\|_{L^1_T(L^2(\Omega))}.
\end{eqnarray*}
On the other hand, observe that for any $a>0$ and $\varepsilon>0$,
\begin{eqnarray}\label{r}
\;(1+2a)e^{a}-1\leq
2(1+\frac{1}{\varepsilon})(e^{(1+\varepsilon)a}-1).
\end{eqnarray}
Then, H\"older inequality together with Sobolev embedding and the
above observation yield

\begin{eqnarray*}
\left\| u[(1+8\pi u_{\theta}^2)e^{ 4\pi
u_{\theta}^2}-1]\right\|^2_{L^2(\Omega)}&\leq&
C_{\varepsilon}\left\| u(e^{4\pi(1+\varepsilon)
u_{\theta}^2}-1)\right\|^2_{L^2(\Omega)}\\
&\leq& C_{\varepsilon} \left\|u(t)\right\|^2_{L^{2+\frac 2
\zeta}(\Omega)}
\left\| (e^{4\pi(1+\varepsilon)u_{\theta}^2}-1)^2\right\|_{L^{1+\zeta}(\Omega)}\\
&\leq&C_{\varepsilon}
\left\|u(t)\right\|^2_{H^1(\Omega)}e^{4\pi(1+\varepsilon)\|u_{\theta}\|^2_{L^{\infty}(\Omega)}}\left\|
 e^{4\pi(1+\varepsilon)u_{\theta}^2}-1\right\|_{L^{1+\zeta}(\Omega)},
\end{eqnarray*}
for any $\varepsilon>0$. Moreover, since
${\|u_{\theta}\|}^2_{H^1_0(\Omega)}\leq {A}^2$, then the
Moser-Trudinger inequality \eqref{MT1} implies that
\begin{eqnarray*}
\int_{\Omega}
  (e^{4\pi(1+\varepsilon)u_{\theta}^2}-1)^{1+\zeta}\;dx\leq \int_{\Omega}
  (e^{4\pi(1+\varepsilon)(1+\zeta)
 u_{\theta}^2}-1)\;dx\leq C{(\Omega,A)},
 \end{eqnarray*}
provided that $\varepsilon>0$ and $\zeta>0$ are chosen such that
$(1+\varepsilon)(1+\zeta){A}^2<1$.\\
Thanks to the log estimate \eqref{il} for $\lambda>4/{\pi}$ there
is a constant $C_\lambda>1$ such that
$$
e^{{4\pi(1+\varepsilon){\Vert
u_{\theta}(t)\Vert}^2_{L^\infty(\Omega)}}}\leq\exp\left(4\pi\lambda(1+\varepsilon){\Vert
|D|
u_{\theta}(t)\Vert}^2_{L^2(\Omega)}\log\left(C_{\lambda}+\frac{\|u_{\theta}\|_{C^{1/8}(\Omega)}}{\||D|
u_{\theta}(t)\|_{L^2(\Omega)}}\right)\right).
$$
Using the fact that for any $B_1>1$, $B_2>0$, the function $x\longmapsto
x^2\log(B_1+\displaystyle\frac{ B_2}{x})$ is non-decreasing, we
deduce that

\begin{eqnarray*}
e^{{4\pi(1+\varepsilon){\|u_{\theta}(t)\|}^2_{L^\infty(\Omega)}}}&\leq&\exp\left(4\pi(1+\varepsilon)\lambda
{A}^2 \log(C_\lambda+\frac{{\|u_{\theta}(t)\|}_{C^{1/8}(\Omega)}}{A})\right)\\
&\leq&\left(C_\lambda+\frac{{\|u_{\theta}(t)\|}_{C^{1/8}(\Omega)}}{A}\right)^{4\pi(1+\varepsilon)\lambda
{A}^2}.
\end{eqnarray*}
Setting $\gamma=2\pi\lambda(1+\varepsilon){A}^2$, we have

$$
 \left\| u[(1+8\pi u_{\theta}^2)e^{ 4\pi
u_{\theta}^2}-1]\right\|_{L^2(\Omega)} \leq C_{(A,\Omega)}{\|
u(t)\|}_{H^1(\Omega)}\left(C_\lambda+\frac{{\|u_{\theta}(t)\|}_{C^{1/8}}}{A}\right)^{\gamma}.
$$
Now since $A<1$, we can choose $\lambda$ such that $0<\gamma<8$. Thus
\begin{eqnarray*}
\int_0^{T}\left\| u[(1+8\pi u_{\theta}^2)e^{ 4\pi
u_{\theta}^2}-1]\right\|_{L^2(\Omega)}\;dt \leq
C_{(\Omega,A)}{\|u\|}_{T}\int_0^{T}\left(C+\frac{{\|
u_{\theta}(t)\|}_{C^{1/8}(\Omega)}}{A}\right)^{\gamma}\;dt
\end{eqnarray*}

\begin{eqnarray*}
&\leq&
 C_{(\Omega,A)} {\left\| u\right\|}_{T}
\left[T+\left\|\left(\frac{\|u_1(t)\|_{C^{1/8}(\Omega)}}{A}\right)^{\gamma}\right\|_{L^1_T}
+\left\|\left(\frac{\|u_2(t)\|_{C^{1/8}(\Omega)}}{A}\right)^{\gamma}\right\|_{L^1_T}\right]\\
&\leq&
 C_{(\Omega,A)}
{\|u\|}_{T}\left[T+T^{\frac{8-\gamma}{8}}\left(\left(\frac{{\|
u_1\|}_T}{A}\right)^{\gamma}+\left(\frac{{\|
u_2\|}_T}{A}\right)^{\gamma}\right)\right].
\end{eqnarray*}
Finally, we obtain
$$
{\|f(u_1)-f(u_2)\|}_{L^1_T(L^2(\Omega))}\leq C_{(\Omega,A)} {\|
u\|}_{T}\left[T+T^{1-\frac{\gamma}{8}}\left(\left(\frac{{\|
u_1\|}_T}{A}\right)^{\gamma}+\left(\frac{{\|
u_2\|}_T}{A}\right)^{\gamma}\right)\right]
$$
as desired.
\end{proof}
\begin{lem}\label{lem2}
Let $F(u)=e^{4\pi u^2}-1$ and $u\in C([0 , T] , H_0^1(\Omega))\cap
C^1([0 , T] , L^2(\Omega))$ be the solution of \eqref{C1} with $u(t=0)=u_0$ such that $\|\nabla
u_0\|_{L^2(\Omega)}<1$. Then there exists a continuous real valued
function $C(t)$, vanishing at zero such that
$$
\|F(u)\|_{L^1_T(L^2(\Omega))}\leq C(T).
$$
\end{lem}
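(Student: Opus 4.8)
The plan is to estimate $\|F(u)\|_{L^1_T(L^2(\Omega))} = \int_0^T \|e^{4\pi u(t)^2}-1\|_{L^2(\Omega)}\,dt$ by first converting the $L^2$-norm of $e^{4\pi u^2}-1$ into a quantity controlled by the sharp Moser--Trudinger inequality (Proposition~\ref{MT}) and the logarithmic inequality (Proposition~\ref{elog}), and then integrating in time using the fact that $u\in L^8([0,T],C^{1/8}(\Omega))$. The first step mirrors the computation in Lemma~\ref{lem1}: since squaring $e^{4\pi u^2}-1$ doubles the exponent, I would use the elementary inequality $(e^{a}-1)^2\le e^{2a}-1$ to write
\begin{equation*}
\|e^{4\pi u(t)^2}-1\|_{L^2(\Omega)}^2\le \int_\Omega (e^{8\pi u(t)^2}-1)\,dx,
\end{equation*}
and then split off the $L^\infty$-part of the exponent exactly as before:
\begin{equation*}
\int_\Omega (e^{8\pi u(t)^2}-1)\,dx\le e^{8\pi\|u(t)\|_{L^\infty}^2}\int_\Omega e^{8\pi u(t)^2}\,dx.
\end{equation*}

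Next, because $\|\nabla u_0\|_{L^2(\Omega)}<1$ and $t\mapsto\|\nabla u(t)\|_{L^2(\Omega)}$ is continuous (the solution lies in $C([0,T],H^1_0)$), by shrinking $T$ we may assume $\sup_{t\in[0,T]}\|\nabla u(t)\|_{L^2(\Omega)}=:A<1$. Then, writing $u(t)=A\,\tfrac{u(t)}{A}$ with $\|\nabla(u(t)/A)\|_{L^2}\le 1$, a scaled version of Proposition~\ref{MT} gives $\int_\Omega e^{8\pi u(t)^2}\,dx\le C(\Omega,A)$ provided $8\pi A^2\cdot$ (a fixed multiplier) stays $\le 4\pi$ — which fails in general, so in fact I should be slightly more careful: apply the Moser--Trudinger bound only to $\int_\Omega e^{\beta u(t)^2}dx$ with $\beta A^2\le 4\pi$, absorbing the rest into the $L^\infty$-factor via $(1+\varepsilon)$-type splitting as in Lemma~\ref{lem1}. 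For the $L^\infty$ factor, apply Proposition~\ref{elog} with some $\lambda>4/\pi$ and use monotonicity of $x\mapsto x^2\log(C_\lambda+B/x)$ together with $\|\nabla u(t)\|_{L^2}\le A$ to obtain
\begin{equation*}
e^{8\pi\|u(t)\|_{L^\infty}^2}\le \Big(C_\lambda+\frac{\|u(t)\|_{C^{1/8}(\Omega)}}{A}\Big)^{8\lambda A^2}.
\end{equation*}
Choosing $\lambda$ close enough to $4/\pi$ and $A$ close enough to $1$ (but then $T$ small), the exponent $\gamma':=4\lambda A^2$ (coming from the square root, after taking $\|\cdot\|_{L^2}$ rather than $\|\cdot\|_{L^2}^2$) can be kept strictly below $8$; this is the one genuinely delicate bookkeeping point, since $8\pi\cdot(4/\pi)=32$ is large and one needs the square-root and the smallness of $A$ to pull $\gamma'$ under $8$. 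Writing everything out, $\|e^{4\pi u(t)^2}-1\|_{L^2(\Omega)}\le C_{(\Omega,A)}\big(C_\lambda+\|u(t)\|_{C^{1/8}}/A\big)^{\gamma'}$ with $0<\gamma'<8$.

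Finally, integrate in $t$ and use Hölder with exponents $8/\gamma'$ and $8/(8-\gamma')$:
\begin{equation*}
\int_0^T \Big(C_\lambda+\frac{\|u(t)\|_{C^{1/8}(\Omega)}}{A}\Big)^{\gamma'}dt\le C\Big(T+T^{1-\gamma'/8}\|u\|_{L^8([0,T],C^{1/8}(\Omega))}^{\gamma'}\Big),
\end{equation*}
which, since $u\in\mathcal E_T$, defines a continuous function of $T$ vanishing at $T=0$. Setting $C(T)$ equal to this right-hand side (with the implicit constant absorbed) gives the claim. The main obstacle, as indicated, is not any single hard estimate but making sure the exponent produced by combining the doubled Moser--Trudinger exponent with the sharp logarithmic inequality stays below $8$ so that the Hölder step yields a positive power of $T$; this forces the choice of $A<1$ (hence small $T$) and a $\lambda$ only slightly larger than $4/\pi$, handled exactly as in the proof of Lemma~\ref{lem1}.
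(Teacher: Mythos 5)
There is a genuine gap, and it is not the exponent bookkeeping you flag as delicate (that part can indeed be arranged as in Lemma~\ref{lem1}, modulo some slipped factors of $\pi$ in your exponents); it is your use of the norm $\|u\|_{L^8([0,T],C^{1/8}(\Omega))}$ in the final Hölder step. The hypothesis of Lemma~\ref{lem2} is only that $u\in C([0,T],H_0^1(\Omega))\cap C^1([0,T],L^2(\Omega))$: the lemma is the tool for the \emph{unconditional} uniqueness argument (Step~2 of the proof of Theorem~\ref{loc}), where it is applied to a competitor solution $U_2$ that is only known to lie in the energy space. For such a $u$ you have no right to write ``since $u\in\mathcal E_T$''; assuming the $L^8_TC^{1/8}$ bound on $u$ itself is assuming exactly the extra regularity the lemma is designed to avoid, and with that assumption the uniqueness obtained would only be uniqueness in $\mathcal E_T$, not in the energy space.

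The paper's proof circumvents this by splitting $u=v_L+\widetilde u$, where $v_L$ is the \emph{free} evolution of the data $(u_0,u_1)$ and $\widetilde u$ solves the inhomogeneous problem with zero data. The Strichartz estimate of Theorem~\ref{st1} applies to $v_L$ (it is a solution of the linear equation, so $v_L\in L^8_TC^{1/8}$ with norm controlled by the data), while $\widetilde u$ is handled with no Hölder norm at all: since $\widetilde u(0)=0$ and $t\mapsto\|\widetilde u(t)\|_{H^1_0}$ is continuous, one can take $T$ small so that $\|\widetilde u(t)\|_{H^1_0}\le\varepsilon_1$ with $2b\varepsilon_1^2\le1$, and then $\int_\Omega\bigl(e^{8\pi b\,\widetilde u^2}-1\bigr)\,dx$ is bounded by Moser--Trudinger alone. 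The cross term is controlled by Hölder's inequality combined with the same two ingredients. Only after this decomposition does one run your final step, with $\|v_L\|_{L^8_TC^{1/8}}$ in place of $\|u\|_{L^8_TC^{1/8}}$. If you incorporate this decomposition, the rest of your outline (the $(e^a-1)^2\le e^{2a}-1$ reduction, the logarithmic inequality, the monotonicity of $x\mapsto x^2\log(B_1+B_2/x)$, and the concluding Hölder in time producing $T+T^{1-\beta'/8}(\cdots)$) matches the paper's argument.
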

\emph{Proof}. Write the solution $u$ of  \eqref{C1} as $u=v_L+\widetilde{u}$ where $v_L$ solves the free wave equation with the same data as $u$ and $\widetilde{u}$
belongs to $C([0 , T] , H_0^1(\Omega))\cap C^1([0 , T] ,
L^2(\Omega))$ and solves the perturbed problem

\begin{equation} \left\{
\begin{array}{llll}

\Box\widetilde{u}=-(v_L+\widetilde{u})(e^{4\pi
(v_L+\widetilde{u})^2}-1)\\
\widetilde{u}(0,x)=0\\
{\p}_{t}\widetilde{u}(0,x)=0.
\end{array}
\right.
\end{equation}
Recall the following trivial observations
\begin{equation}\label{ob1}
\mbox{For\;all}\;\varepsilon>0,\quad(\widetilde{u}+v_L)^2\leq
(1+\frac{1}{2\varepsilon}) \widetilde{u}^2+(1+2\varepsilon)v_L^2
\end{equation}

\begin{equation}\label{ob2}
e^{x+y}-1=(e^{x}-1)(e^{y}-1)+(e^{x}-1)+(e^{y}-1)
\end{equation}
and
\begin{equation}\label{ob3}
\mbox{for\;all}\;x\geq 0,\;\mbox{and }
 \;\alpha>1,\quad (e^x-1)^{\alpha}\leq e^{\alpha x}-1.
 \end{equation}
Set $a=(1+2\varepsilon)$ and $b= (1+\frac 1 {2\varepsilon})$, then observations \eqref{ob1} and \eqref{ob2} imply

\begin{eqnarray*}
\left(e^{4\pi (v_L+\widetilde{u})^2}-1\right)^2&\leq&\left(e^{4\pi(a
v_L^2+b \widetilde{u}^2)}-1\right)^2\\
&\leq&3\left[(e^{4\pi a v_L^2}-1)^2(e^{4\pi b
\widetilde{u}^2}-1)^2+(e^{4\pi a v_L^2}-1)^2+(e^{4\pi b
\widetilde{u}^2}-1)^2\right]
\end{eqnarray*}
and,

$$
\| (e^{4\pi (v_L+\widetilde{u})^2(t)}-1)\|^2_{L^2(\Omega)}\leq
3(I_1(t)+I_2(t)+I_3(t)),
$$
where we set\\
$$I_1(t)=\displaystyle\int_{\Omega}(e^{4\pi b
\widetilde{u}^2(t,x)}-1)^2\;dx\quad,\quad
I_2(t)=\displaystyle\int_{\Omega}(e^{4\pi a v_L^2(t,x)}-1)^2\;dx$$

$$\mbox{and\quad}I_3(t)=\displaystyle\int_{\Omega}(e^{4\pi a
v_L^2(t,x)}-1)^2(e^{4\pi b \widetilde{u}^2(t,x)}-1)^2\;dx.$$
\\
Thanks to the continuity in time of $v_L$ and $\widetilde{u}$ and
the fact that $\|\nabla u_0\|_{L^2(\Omega)}< 1$, one can choose
$\varepsilon_1$ arbitrary small (to be fixed later) and take $0<A^2:=\frac 1 2(1+\|u_0\|^2_{H^1_0(\Omega)})<1$ to find a time $T>0$
such that for all $t\in[0,T]$

$$
\|\widetilde{u}(t,.)\|_{H^1_0(\Omega)}\leq
\varepsilon_1\quad\mbox{and}\quad\|v_L(t,.)\|_{H^1_0(\Omega)}\leq
A.
$$
Combining H\"older's inequality, the log estimate \eqref{il} and the
fact that for all $t\in[0,T]$,
$$
{\| v_L(t,.)\|}_{H_0^1({\Omega})}\leq A
$$
with the monotonicity of the function $x\longmapsto
x^2\log(B_1+\displaystyle\frac{B_2}{x})$ lead to

\begin{eqnarray*}
I_2(t)&\leq& e^{4\pi a{\Vert
v_L(t,.)\Vert}^2_{L^{\infty}(\Omega)}}\int_{\Omega}(e^{4\pi a
v_L^2(t,x)}-1)\;dx\\
&\leq&\left(C_\lambda+\frac{{\Vert
v_L(t,.)\Vert}_{C^{1/8}}}{A}\right)^\beta\int_{\Omega}(e^{4\pi a
v_L^2(t,x)}-1)\;dx,
\end{eqnarray*}
where we set $\beta={4\pi a A^2\lambda}.$\\
Now we choose $\varepsilon>0$ such that $4\pi a A^2<4\pi$. Then, by Moser-Trudinger inequality \eqref{MT1}

\begin{eqnarray*}
\int_{\Omega}(e^{4\pi a v_L^2(t,x)}-1)\;dx\leq C(\Omega,A)
\end{eqnarray*}\\
and therefore

$$
I_2(t)\leq C(\Omega,A)\left(C_\lambda+\frac{{\Vert
v_L(t,.)\Vert}_{C^{1/8}}}{A}\right)^\beta.
$$
Now, using \eqref{ob3} we have
$$
I_1(t)\leq \displaystyle\int_{\Omega}e^{8\pi b
\widetilde{u}^2(t,x)}-1\;dx.
$$
Choosing $\varepsilon_1>0$ such that $2 b {\varepsilon_1}^2\leq1$, then again by Moser-Trudinger inequality \eqref{MT1}, we have
$$
I_1(t)\leq C(\Omega).
$$
Now applying H\"{o}lder inequality and \eqref{ob3}, we obtain

\begin{eqnarray*}
I_3(t)&\leq&\left(\int_{\Omega}(e^{4\pi a
v_L^2(t,x)}-1)^{2a}\;dx\right)^{\frac{1}{a}}\left(\int_{\Omega}(e^{4\pi
b \widetilde{u}^2(t,x)}-1)^{2b}\;dx\right)^{\frac{1}{b}}\\
&\leq&e^{4\pi a{\Vert
v_L(t,.)\Vert}^2_{L^{\infty}(\Omega)}}\left(\int_{\Omega}(e^{4\pi
{a^2}v_L^2(t,x)}-1)\;dx\right)^{\frac{1}{a}}\left(\int_{\Omega}(e^{4\pi
2b^{2}\widetilde{u}^2(t,x)}-1)\;dx\right)^{\frac{1}{b}},
\end{eqnarray*}
and similarly as before, we estimate

$$
I_3(t)\leq C \left(C_\lambda+\frac{{\Vert
v_L(t,.)\Vert}_{C^{1/8}}}{A}\right)^\beta.
$$
Consequently,

$$
{\Vert e^{4\pi (v_L+\widetilde{u})^2}-1\Vert}_{L^2(\Omega)}\leq
C\left[1+\left(C_\lambda+\frac{{\Vert
v_L(t,.)\Vert}_{C^{1/8}}}{A}\right)^{\beta'}\right]
$$
where ${\beta'}=2\pi a\lambda A^2$. Therefore,

\begin{eqnarray*}
\|F(u)\|_{L^1_T(L^2(\Omega))}&\leq& C \int_0^T
\left(1+\left(C_\lambda+\frac{{\Vert
v_L(t,.)\Vert}_{C^{1/8}}}{A}\right)^{\beta'}\right)\;dt\\
&=& C\left[T+\int_0^T\left(C_\lambda+\frac{{\Vert
v_L(t,.)\Vert}_{C^{1/8}}}{A}\right)^{\beta'}\;dt\right].
\end{eqnarray*}
Choosing $\lambda$ such that $\beta'<8$ and applying H\"older
inequality with $p=8/\beta'$, we obtain

\begin{eqnarray*}
\|F(u)\|_{L^1_T(L^2(\Omega))}&\lec&
\left[T+T^{1-\frac{\beta'}{8}}\left\|C_\lambda+\frac{{\Vert
v_L(t,.)\Vert}_{C^{1/8}}}{A}\right\|_{L^8_T}^{\beta'}\right]\\
&\lec&
\left[T+T^{1-\frac{\beta'}{8}}\left(T^{\frac{\beta'}{8}}+\left(\frac{\|v_L(t)\|_{{L^8_T}(C^{1/8}(\Omega))}}{A}\right)^{\beta'}\right)\right]\\
&\lec&\left[T+T^{1-\frac{\beta'}{8}}\left(\frac{\|v_L(t)\|_{{L^8_T}(C^{1/8}(\Omega))}}{A}\right)^{\beta'}\right]:=C(T).
\end{eqnarray*}
\endproof
Now we prove the local existence result.\\
\textbf{{Proof of Theorem \ref{loc}}}. The proof is divided into two steps\\\\
\textbf{Step 1:} The existence in $\mathcal E_T.$

We write the solution $u$ of problem \eqref{C1} as
$$u=v+v_L$$
with as before $v_L$ solves the free wave equation
with the same initial data $(u_0,u_1)$ and $v$ solves the following perturbed problem

\begin{equation}\label{C2}
\left\{
\begin{array}{llll}

\Box{v}=-(v+v_L)(e^{4\pi(v+v_L)^2}-1)\\
{v}(0,x)=0\\
{\p}_{t}{v}(0,x)=0\\
v|_{[0,T]\times\p \Omega}=0.
\end{array}
\right.
\end{equation}
Define the map $\phi:\mathcal
E_T\longrightarrow \mathcal E_T;\;\;v \longmapsto \tilde{v}, $ where
$\widetilde{v}$ satisfies
\begin{equation}\label{C3} \left\{
\begin{array}{llll}
\Box\widetilde{v}=-(v+v_L)(e^{4\pi(v+v_L)^2}-1)\\
\widetilde{v}(0,x)=0\\
{\p}_{t}\widetilde{v}(0,x)=0\\
\widetilde{v}|_{[0,T]\times\p \Omega}=0.
\end{array}
\right.
\end{equation}
We claim that for $T$ small enough, the map $\phi$ is well defined from $\mathcal E_T$ into itself and is a
contraction. Indeed, consider $v_1$ and $v_2$ in $\mathcal E_T$ and set
$$
u_1=v_1+v_L\quad u_2=v_2+v_L.
$$
Using the energy and Strichartz estimates we have

$$
{\Vert \phi(v_1)-\phi(v_2)\Vert}_T\leq C {\Vert
f(u_1)-f(u_2)\Vert}_{L^1_T(L^2(\Omega))}.
$$
Since $u_1$ and $u_2$ are two elements of $\mathcal E_T$ satisfying
$u_1(0,x)=u_2(0,x)=u_0(x)$ and $\|u_0\|_{H^1_0}<1$, then there exist $0<A<1$ and a positive real
number $T_0$ such that for any $0\leq t\leq T_0$,
$$
{\Vert u_1(t)\Vert}_{H_0^1(\Omega)}\leq A\quad \mbox{and}
\quad{\Vert u_2(t)\Vert}_{H_0^1(\Omega)}\leq A.
$$
Thanks to Lemma \ref{lem1}, there exist $0<\gamma<8$ such that for
any $T\in[0,T_0]$

\begin{eqnarray*}
{\Vert f(u_1)-f(u_2)\Vert}_{L^1_T(L^2(\Omega))}&\leq& C{\Vert
u_1-u_2\Vert}_{T} \left( T+T^{\frac{8-\gamma}{8}}
\left(\left(\frac{{\Vert u_1\Vert}_T}{A}\right)^{\gamma}+
\left(\frac{{\Vert u_2\Vert}_T}{A}\right)^{\gamma}\right)\right)\\
&\leq& C{\Vert v_1-v_2\Vert}_{T} \left[T+T^{\frac{8-\gamma}{8}}
 \left(\left(\frac{{\Vert
 u_1\Vert}_T}{A}\right)^{\gamma}+
 \left(\frac{{\Vert
 u_2\Vert}_T}{A}\right)^{\gamma}\right)
 \right].
 \end{eqnarray*}
Let $C(T)=C\left[T+T^{\frac{8-\gamma}{8}}
 \left(\left(\frac{{\Vert
 u_1\Vert}_T}{A}\right)^{\gamma}+
 \left(\frac{{\Vert
 u_2\Vert}_T}{A}\right)^{\gamma}\right)
 \right]$, we have

$$
{\Vert \phi(v_1)-\phi(v_2)\Vert}_T \leq  C(T){\Vert
v_1-v_2\Vert}_{T}.
$$
\\
So, for $T$ small enough, we have $C(T)\leq 1/2$ implying that
$\phi$ is a contraction map. Taking $v_2=0$ shows that $\phi$ is well defined.\\

\textbf{Step 2:} Uniqueness in the energy space.\\
Let $U_1$ and $U_2$ be in $C([0 , T] , H_0^1(\Omega))\cap C^1([0 ,
T],L^2(\Omega))$ two solutions of problem \eqref{C1} having the
same initial data $(u_0, u_1 )$. Let $w=U_1-U_2$, then $w$ satisfies
\begin{equation*}
\left\{
\begin{array}{llll}

\Box{w}=U_2(e^{4\pi U_2^2}-1)-U_1(e^{4\pi U_1^2}-1)\\
{w}(0,x)=0\\
{\p}_{t}{w}(0,x)=0\\
w|_{[0,T]\times\p\Omega}=0.
\end{array}
\right.
\end{equation*}\\
\\
In the sequel we shall prove the existence of a continuous function $C(\cdot)$ defined on $[0,T]$, vanishing at $t=0$ and such that
$$
\|w\|_E\leq C(T)\|w\|_E,
$$
where ${\| w\|}_E=\displaystyle\sup_{t\in[0,T]}
 (\Vert w(t,.)\Vert_{H_0^1(\Omega)}+\Vert{\p_{t}w(t,.)}\Vert_{L^2(\Omega)})$.\\
Using the energy estimate, the mean value Theorem and \eqref{r}, we
have
  \begin{eqnarray*}
  {\| w\|}_E
  &\leq& C{\|
 U_2(e^{4\pi U_2^2}-1)-U_1(e^{4\pi
 U_1^2}-1)\|}_{L^1_T(L^2(\Omega))}\\
 &\leq&C{\Vert
 U_2(e^{4\pi U_2^2}-1)-(U_2+w)(e^{4\pi
 (U_2+w)^2}-1)\Vert}_{L^1_T(L^2(\Omega))}\\
 &\leq&C{\Vert
 w(e^{4\pi(1+\varepsilon) \overline{w}^2}-1)
\Vert}_{L^1_T(L^2(\Omega))},
  \end{eqnarray*}
where, $\overline{w}=(1-\theta)(w+U_2)+\theta U_2=(1-\theta)w+U_2.$ Thanks to H\"older inequality, the Sobolev embeddings and
\eqref{ob3}, we have
\begin{eqnarray*}
{\Vert
w(t)(e^{4\pi(1+\varepsilon)\overline{w}^2(t)}-1)\Vert}^{2}_{L^2(\Omega)}&\leq&{\Vert
w(t)\Vert}^2_{L^{2+2/\varepsilon}(\Omega)}{\Vert
(e^{4\pi(1+\varepsilon)\overline{w}^2(t)}-1)^2\Vert}_{L^{1+\varepsilon}(\Omega)}\\
\\
&\leq& {\Vert w(t)\Vert}^2_{H_0^1(\Omega)}{\Vert
(e^{4\pi(1+\varepsilon)^2\overline{w}^2(t)}-1)\Vert}^{2/{(1+\epsilon)}}_{L^2(\Omega)}.
\end{eqnarray*}
By continuity in time of $w$ and $U_2$ and the fact that
$w(0,x)={\p}_{t}w(0,x)=0$ and $U_2(0,x)=u_0(x)$ with $\|\nabla
u_0\|_{L^2(\Omega)}<1$, there exist a positive real number $T_1$ such
that, for any $t\in [0,T_1]$

$$
\|w(t)\|_{H_0^1(\Omega)}\leq \varepsilon\quad \mbox{and}\quad
\|U_2(t)\|_{H^1_0(\Omega)}\leq A.
$$
Arguing as in the proof of Lemma \ref{lem2}, we obtain
\begin{eqnarray*}
{\Vert
e^{4\pi(1+\varepsilon)^2\overline{w}^2(t,\cdot)}-1\Vert}_{L^2}
&\lec&
\big(1+\|e^{4\pi(1+\varepsilon)^2 a U_2^2(t,\cdot)}-1\|_{L^2(\Omega)}\\
&+&\|e^{4\pi(1+\varepsilon)^2 a^2
U_2^2(t,\cdot)}-1\|_{L^2(\Omega)}\big).
\end{eqnarray*}
Finally, for any $0<T\leq T_1$

\begin{eqnarray*}
\displaystyle\int_0^T{\Vert
(e^{4\pi(1+\varepsilon)^2\overline{w}^2(t)}-1)\Vert}_{L^2}^{\frac{1}{1+\varepsilon}}\;dt
&\lec&T+\int_0^T\|(e^{4\pi(1+\varepsilon)^2 a
U_2^2}-1)\|^{\frac{1}{1+\varepsilon}}_{{L^2(\Omega)}}\;dt\\ &+&\int_0^T\|(e^{4\pi(1+\varepsilon)^2
a^2 U_2^2}-1)\|^{\frac{1}{1+\varepsilon}}_{L^2(\Omega)}\;dt .
\end{eqnarray*}
To estimate the last two terms in the above right-hand side,
we use Lemma \ref{lem2}. Hence

$$
\int_0^T{\Vert
(e^{4\pi(1+\varepsilon)^2\overline{w}^2(t)}-1)\Vert}_{L^2}^{\frac{1}{1+\varepsilon}}\;dt\leq
C(T)
$$
\\
and finally we have
\begin{eqnarray*}
{\Vert
 w(e^{4\pi(1+\varepsilon) \overline{w}^2}-1)
\Vert}_{L^1_T(L^2(\Omega))}&\leq&
\sup_{t\in[0,T]}\|w(t)\|_{H_0^1(\Omega)}\int_0^T{\Vert
(e^{4\pi(1+\varepsilon)^2\overline{w}^2(t)}-1)\Vert}_{L^2}^{\frac{1}{1+\varepsilon}}\;dt\\
\\
&\leq& C(T)\|w\|_E
\end{eqnarray*}
as desired.
\endproof
\section{The global existence }
Theorem \ref{loc} guarantees that in the subcritical and critical
cases, there exists a unique local solution to the Cauchy problem
\eqref{C1}. In this section we propose to extend the local existence
result to global one (in time). We start by the subcritical case and prove Theorem \ref{csc}.

\subsection{The subcritical case: Proof of Theorem \ref{csc}}

\begin{proof}
We have $E_0<1$, so in particular $\|\nabla u_0\|_{L^2(\Omega)}<1$. Then,
according to the local theory (Theorem \ref{loc}), there exist a
unique maximal solution $u$ in the space $\mathcal E_T{^\star}$
where $0<T^\star\leq +\infty$ is the lifespan of $u$. The fact that
$T^*=+\infty$ is then an immediate consequence of the energy
conservation
$$
\sup_{0<t<T^\star}\|\nabla u(t,\cdot)\|_{L^2(\Omega)}\leq E(u,t)=E_0<1,
$$
and the fact that $T^*$ depends upon $1-\|\nabla u_0\|_{L^2}^2$.
\end{proof}

The proof in the critical case is more subtle. Indeed, we need to
show that concentration cannot occur close to $T^*$. We combine
ideas from \cite{sli moh nad1} and \cite{BLP}. However, it is
important to point out here that our proof is simpler than that one
of Burq-Lebeau-Planchon in \cite{BLP} for the quintic energy critical
equation in dimension three. This is because for our purpose, we
only use the multipliers $u$ and $\partial_t u$. The multiplier
$x\cdot\nabla u$ requires more careful study since it generates other boundary terms
but it is not needed here. See \cite{BLP} for complete details.

\subsection{The critical case : Proof of Theorem \ref{cc}}

\begin{proof}
Let $u$ be the unique maximal solution to the Cauchy problem \eqref{C1} in the space $\mathcal E_T{^\star}$. We show that if $T^\star$ is finite then we have a contradiction. We start by showing some properties of the maximal solution $u$ in the critical case.
\begin{prop}\label{PC12*}
The maximal solution $u$ verifies

\begin{eqnarray}\label{PC11}
\displaystyle\limsup_{t\rightarrow T^\star}\|\nabla
u(t)\|_{L^2(\Omega)}=1,
\end{eqnarray}
and

\begin{eqnarray}\label{PC12}
u(t)\displaystyle\stackrel{t\rightarrow T^\star}{\longrightarrow} 0
\quad\mbox{in}\; L^2(\Omega).
\end{eqnarray}
\end{prop}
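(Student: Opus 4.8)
**Proof plan for Proposition \ref{PC12*}.**

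The plan is to argue by contradiction, assuming $T^\star < \infty$, and to exploit the maximality of the solution together with the energy conservation \eqref{enr}. For the first claim \eqref{PC11}, I would first observe that, since $E_0 = 1$, the energy identity gives $\|\nabla u(t)\|_{L^2(\Omega)} \le 1$ for all $t \in [0,T^\star)$, so that $\limsup_{t\to T^\star}\|\nabla u(t)\|_{L^2(\Omega)} \le 1$. Suppose for contradiction that this limsup is strictly less than $1$; then there is $\rho < 1$ and $t_0 < T^\star$ with $\|\nabla u(t)\|_{L^2(\Omega)} \le \rho$ for all $t \in [t_0, T^\star)$. In particular $\|\nabla u(t_0)\|_{L^2(\Omega)} < 1$, and one can re-run the local existence theorem (Theorem \ref{loc}) starting from time $t_0$; the key point, as noted at the end of the proof of Theorem \ref{csc}, is that the existence time furnished by Theorem \ref{loc} depends only on a lower bound for $1 - \|\nabla u(t_0)\|_{L^2(\Omega)}^2 \ge 1 - \rho^2 > 0$ (and on $T^\star$, $\Omega$). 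Hence the solution extends past $T^\star$, contradicting maximality. This forces $\limsup_{t\to T^\star}\|\nabla u(t)\|_{L^2(\Omega)} = 1$.

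For the second claim \eqref{PC12}, the idea is that \eqref{PC11} together with energy conservation pins down the full energy budget at the blow-up time. Since $\|\partial_t u(t)\|_{L^2(\Omega)}^2 + \|\nabla u(t)\|_{L^2(\Omega)}^2 + \int_\Omega \frac{e^{4\pi u^2}-1-4\pi u^2}{4\pi}\,dx = 1$ for every $t$, and each of the three terms is nonnegative, the relation \eqref{PC11} implies that along the sequence $t_k \to T^\star$ realizing the limsup one has $\|\partial_t u(t_k)\|_{L^2(\Omega)} \to 0$ and $\int_\Omega \frac{e^{4\pi u^2(t_k)}-1-4\pi u^2(t_k)}{4\pi}\,dx \to 0$. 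I would then upgrade this to convergence as $t \to T^\star$ (not just along a subsequence) and, crucially, convert the vanishing of the nonlinear energy term into $L^2$ convergence of $u(t)$ itself to $0$. The elementary pointwise bound $\frac{e^{4\pi s^2}-1-4\pi s^2}{4\pi} \ge 2\pi s^4 \gtrsim s^4$ (or, more simply, $\ge c\,s^2$ for $|s|$ bounded away from $0$, combined with a Moser–Trudinger bound to control the region where $u$ is large) gives $\|u(t)\|_{L^4(\Omega)} \to 0$; since $\Omega$ has finite measure, $\|u(t)\|_{L^2(\Omega)} \le |\Omega|^{1/4}\|u(t)\|_{L^4(\Omega)} \to 0$, which is \eqref{PC12}.

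The main obstacle is making the passage from ``along a subsequence $t_k$'' to ``as $t \to T^\star$'' rigorous for \eqref{PC12}, and correctly handling the nonlinear energy term. One clean way is: the nonlinear energy $N(t) := \int_\Omega \frac{e^{4\pi u^2(t)}-1-4\pi u^2(t)}{4\pi}\,dx$ satisfies $N(t) = 1 - \|\partial_t u(t)\|_{L^2}^2 - \|\nabla u(t)\|_{L^2}^2 \le 1 - \|\nabla u(t)\|_{L^2}^2$, so using \eqref{PC11} gives $\liminf_{t\to T^\star} N(t) = 0$; to get the genuine limit one needs continuity of $t \mapsto \|\nabla u(t)\|_{L^2}$ (which holds since $u \in C([0,T^\star),H^1_0)$) and the fact that $\|\nabla u(t)\|_{L^2} \le 1$ cannot cross $1$, so actually $N$ is controlled by $1-\|\nabla u(t)\|_{L^2}^2 \to 0$ along the realizing sequence, and then one argues that since $\|u(t)\|_{L^2}$ is itself continuous and bounded, any subsequential limit must be $0$, whence the full limit is $0$. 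I expect the bookkeeping around which quantities converge along subsequences versus in the full limit — and the precise elementary inequality used to extract $L^2$-smallness from smallness of $N(t)$ — to be where the care is needed; the rest is a direct consequence of the local theory and the conservation law.
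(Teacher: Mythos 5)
Your argument for \eqref{PC11} is correct and is essentially the paper's: bound the limsup by $1$ via \eqref{enr}, assume it is $\ell<1$, use continuity on the compact interval $[0,t_0]$ to get $\sup_{0\le t<T^\star}\|\nabla u(t)\|_{L^2}<1$, and invoke the fact that the lifespan in Theorem \ref{loc} depends only on $1-\|\nabla u_0\|_{L^2}^2$ to extend past $T^\star$.

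For \eqref{PC12} there is a genuine gap at exactly the point you flagged. Since \eqref{PC11} is only a limsup, your argument yields $N(t_k)\to 0$, hence $\|u(t_k)\|_{L^2}\to 0$, only along the realizing sequence $(t_k)$. You then assert that because $t\mapsto\|u(t)\|_{L^2}$ is ``continuous and bounded, any subsequential limit must be $0$.'' That implication is false: a continuous bounded function on $[0,T^\star)$ can oscillate and have many subsequential limits as $t\to T^\star$, and nothing in your argument rules this out, since you have no control on $N(t)$ (equivalently on $\|\partial_t u(t)\|_{L^2}$ and $\|\nabla u(t)\|_{L^2}$) off the sequence $(t_k)$. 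The missing ingredient is the one the paper leads with: by \eqref{enr}, $\sup_{\tau<T^\star}\|\partial_t u(\tau)\|_{L^2}\le 1$, so $\|u(t)-u(s)\|_{L^2}\le |t-s|$; thus $t\mapsto u(t)$ is Lipschitz into $L^2$ up to $T^\star$, the limit $\overline u=\lim_{t\to T^\star}u(t)$ exists in $L^2$, and it is then identified as $0$ from the subsequence. With that step inserted, your proof closes. I would add that your identification of the limit — the elementary bound $e^x-1-x\ge x^2/2$ giving $N(t)\ge 2\pi\|u(t)\|_{L^4}^4$ and then H\"older on the finite-measure domain — is cleaner and more quantitative than the paper's route, which instead applies Fatou's lemma to the energy identity to deduce $\liminf_{t\to T^\star}\bigl(e^{4\pi u^2}-1-4\pi u^2\bigr)=0$ a.e.\ and concludes $\overline u=0$ from there.
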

\emph{Proof.} Using \eqref{enr}, we have for all $0\leq t<T^\star$,
$$
\|\nabla u(t)\|^2_{L^2(\Omega)}+\|\p_t
u(t)\|^2_{{L^2}(\Omega)}+\int_{\Omega}\frac{ e^{{4\pi
u^2(t,x)}}-1-4\pi u^2(t,x)}{4\pi}\;dx=1.
$$
Hence ,

$$
\displaystyle\limsup_{t\rightarrow T^\star}\|\nabla
u(t)\|_{L^2(\Omega)}\leq 1.
$$
Assuming that $\displaystyle\limsup_{t\rightarrow T^\star}\|\nabla
u(t)\|_{L^2(\Omega)}=\ell<1$, then for
$\varepsilon:=\frac{1-\ell}{2}$, one can find a time $t_0$ such that for
all $0<t_0<t<T^\star$, we have
$$
\|\nabla u(t)\|_{L^2(\Omega)}\leq \frac{\ell+1}{2}.
$$
Moreover, by continuity, there exists a time $t_1$ in the interval
$[0,t_0]$, such that

$$
\displaystyle\sup_{0\leq t \leq t_0}\|\nabla u(t)\|_{L^2(\Omega)}=\|\nabla
u(t_1)\|_{L^2(\Omega)}<1.
$$
Hence

$$
\displaystyle\sup_{0\leq t<T^\star}\|\nabla u(t)\|_{L^2(\Omega)}<1.
$$
Consequently, $u$ can be extended beyond the time $T^\star$, a contradiction. \\
Now, let us show \eqref{PC12}. We consider a sequence $(t_n)$
converging to $T^\star$ as $n\longrightarrow+\infty$. We start by
proving that $u_n:=u(t_n)$ is a Cauchy sequence in $L^2(\Omega)$.
Indeed,

\begin{eqnarray*}
\|u(t_n)-u(t_m)\|_{L^2(\Omega)}&\leq&|t_n-t_m|\displaystyle\sup_{\tau\in[0,T^\star)}\|\p_tu(\tau)\|_{L^2(\Omega)}\\
&<&|t_n-t_m|,
\end{eqnarray*}
which can be made arbitrary small. Thus, there exists $\overline{u}$ in $L^2(\Omega)$ such that $u(t)$ converges to $\overline{u}$ in $L^2(\Omega)$ as $t\longrightarrow T^\star$. Now, we prove that $\overline{u}=0$. Using \eqref{enr} and Fatou's Lemma, we have

\begin{eqnarray*}
\displaystyle\limsup_{t\rightarrow T^\star}\|\nabla
u(t)\|^2_{L^2(\Omega)}-1 &\leq & -\displaystyle\liminf_{t\rightarrow
T^\star}\|\p_t
u(t)\|^2_{L^2(\Omega)}-\displaystyle\liminf_{t\rightarrow
T^\star}\int_{\Omega}\frac{e^{4\pi u^2(t,x)}-1-4\pi u^2(t,x)}{4\pi}\;dx\\
& \leq& -\displaystyle\liminf_{t\rightarrow T^\star}\|\p_t
u(t)\|^2_{L^2(\Omega)}-\int_{\Omega}\displaystyle\liminf_{t\rightarrow
T^\star}\frac{e^{4\pi u^2(t,x)}-1-4\pi u^2(t,x)}{4\pi}\;dx.
\end{eqnarray*}
\\\\
By \eqref{PC11}

$$
\displaystyle\liminf_{t\rightarrow T^\star}\|\p_t
u(t)\|^2_{L^2(\Omega)}+\int_{\Omega}\displaystyle\liminf_{t\rightarrow
T^\star}\frac{e^{4\pi u^2(t,x)}-1-4\pi u^2(t,x)}{4\pi}\;dx\leq 0,
$$
which implies

$$
\liminf_{t\rightarrow T^\star}(e^{4\pi u^2(t,x)}-1-4\pi u^2(t,x))=0
$$
Therefore, $\overline{u}=0.$ This completes the proof of Proposition \ref{PC12*}.\\
\endproof
Now we construct a sort of ``critical element" in the sense that all its energy concentrates in the backward light cone issued from a point. Since the equation is invariant under time translation, in the sequel we will
assume that $T^\star=0$.
\begin{prop}
\label{proposition}
Let $u$ be the maximal solution of problem \eqref{C1}. Then, there
exists a point $x^\star$ in $\overline{\Omega}$ such that, for all $t<0$
\begin{equation}\label{cc1}
supp\;\nabla u(t,\cdot)\subset B(x^\star, -t)\cap\bar\Omega, \quad supp\;\partial_t u(t,\cdot)\subset B(x^\star, -t)\cap\bar\Omega.
\end{equation}
\end{prop}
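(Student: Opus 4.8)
The plan is to exploit the energy conservation \eqref{enr} together with the finite speed of propagation to show that, as $t\to T^\star=0$, the energy density $e(u)$ cannot be spread out over a set of positive diameter. First, I would recall from Proposition \ref{PC12*} that $\limsup_{t\to 0^-}\|\nabla u(t)\|_{L^2(\Omega)}=1$, so by the energy identity the remaining pieces of the energy — namely $\|\partial_t u(t)\|_{L^2}^2$ and $\int_\Omega \frac{e^{4\pi u^2}-1-4\pi u^2}{4\pi}\,dx$ — tend to $0$ along a subsequence, and in fact (using the full conservation) the total energy is constantly $1$. The idea is that \emph{all} of the ``interesting'' part of the energy (the $\|\nabla u\|_{L^2}^2$ mass, which stays bounded below near $1$) must concentrate at a single point. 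To locate that point, for each small ball $B(x,r)\cap\bar\Omega$ consider the local energy $\int_{B(x,r)}e(u(t))\,dx$ and use the standard local energy inequality on backward light cones: for a cone $K^T_S(x_0)$ the flux through the mantle $M_S^T(x_0)$ is nonnegative (this is where $uf(u)\ge 0$ and the sign of the nonlinear potential term are used), so $t\mapsto \int_{D_t(x_0)}e(u(t))\,dx$ is monotone on the part of the cone inside $\Omega$, and the amount of energy that can ``escape'' the cone as $t\to 0^-$ is controlled.

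Concretely, I would argue by contradiction: suppose no such $x^\star$ exists. Then for every $x\in\bar\Omega$ there is a radius $r_x>0$ and a time $t_x<0$ with $\int_{B(x,r_x)\cap\bar\Omega}e(u(t_x))\,dx<1-\sigma$ for some fixed $\sigma>0$ (if instead for every ball around every point the local energy stayed $\ge 1-\sigma$ for all times near $0$, a covering/additivity argument against total energy $1$ gives a contradiction once $\sigma$ is small and the cover is fine enough — here one uses that $\bar\Omega$ is compact). Using finite speed of propagation backward from time $t_x$, the solution restricted to the backward cone through $(x, t_x)$ has energy $<1-\sigma$ for all $t\in(t_x,0)$; covering $\bar\Omega$ by finitely many such balls $B(x_i, r_{x_i}/2)$ and taking $t$ close enough to $0$ that each corresponding backward cone still contains $B(x_i, r_{x_i}/2)$ at time $t$, one bounds the \emph{total} energy at time $t$ by $\sum_i (1-\sigma)$ restricted appropriately — but a more careful book-keeping (subtracting overlaps, or rather: at least one ball must carry energy close to $1$ because $\|\nabla u(t)\|_{L^2}^2\to 1$) forces a single ball to retain essentially all the gradient mass, and shrinking it gives a point $x^\star$. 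Then \eqref{cc1} follows: at time $t<0$, $\nabla u(t)$ and $\partial_t u(t)$ must be supported in $B(x^\star,-t)\cap\bar\Omega$, since any energy outside that ball would, by finite speed of propagation forward to time $0$, survive as energy at $t=0$, contradicting $T^\star=0$ being the blow-up time with $\|\nabla u(t)\|_{L^2}\to 1$ and $u(t)\to 0$ in $L^2$ (Proposition \ref{PC12*}).

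The main obstacle I expect is making the concentration argument genuinely rigorous rather than heuristic: one must handle the boundary of $\Omega$ (backward light cones truncated by $\p\Omega$, where the Dirichlet condition contributes a boundary term — but here, as the Remark stresses, only the multipliers $u$ and $\partial_t u$ are needed, and these produce a \emph{favorably signed} boundary contribution on $[S,T]\times\p\Omega$, so the local energy monotonicity survives), and one must correctly extract a \emph{single} concentration point rather than merely a small concentration set. The cleanest route is: define $\Phi(x)=\lim_{t\to 0^-}\int_{D_{-t}(x)\cap\Omega}e(u(t))\,dx$ (the limit exists by monotonicity of local energy on shrinking cones) and show $\Phi$ is, for $-t$ small, either $\ge c_0>0$ or $=0$ at each point, with $\{\Phi>0\}$ consisting of a single point because two disjoint small cones cannot each capture energy bounded below while the total is $1$ and $u(t)\to 0$ in $L^2$ kills the potential-energy part. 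I would then set $x^\star$ to be that point and read off \eqref{cc1} directly from finite speed of propagation applied to the complement.
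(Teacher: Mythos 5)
There is a genuine gap at the step that actually locates the concentration point $x^\star$. The paper's proof (following \cite{sli moh nad1}) does \emph{not} derive concentration from energy conservation and book-keeping; it derives it from the local well-posedness theory. Precisely: if for some $r,\eta>0$ every ball $B(x,r)\cap\bar\Omega$ carries local energy at most $1-\eta$ for $t$ near $T^\star$, one truncates the data at a time $t_1$ close to $T^\star$ with a cut-off $\varphi_x$ supported in $B(x,r)$; the truncated data then has energy strictly below the threshold $1$, so by Theorem \ref{loc} and the subcritical theory it generates a solution living past $T^\star$, which by finite speed of propagation coincides with $u$ on a forward cone over $B(x,r/2)$; a finite cover of the compact set $\bar\Omega$ then continues $u$ beyond $T^\star$, contradicting maximality. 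Your proposed substitute --- bounding the total energy by $\sum_i(1-\sigma)$ over a finite cover and asserting that ``a more careful book-keeping forces a single ball to retain essentially all the gradient mass'' --- cannot work: a configuration whose energy density is spread uniformly over $\Omega$ satisfies both ``every small ball carries energy $<1-\sigma$'' and ``total energy $=1$'' with no contradiction whatsoever. Energy conservation alone does not force concentration; only the blow-up criterion (local energy below $1$ implies local continuation) does, and that ingredient is absent from your plan.

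Two smaller issues. First, even granted the concentration $\limsup_{t\to0^-}\int_{B(x^\star,r)}|\nabla u(t)|^2=1$ for every $r>0$, one still needs the stronger fact that the energy inside the \emph{shrinking} ball $B(x^\star,-t)$ equals exactly $1$ for every $t<0$ (the paper's \eqref{ec6}); this again uses the same continuation argument along a sequence of times, combined with the backward monotonicity of the cone energy that you correctly identify. Second, your final deduction of \eqref{cc1} speaks of energy ``surviving as energy at $t=0$'', which is not meaningful since the solution exists only for $t<0$; the correct statement is that energy located outside $B(x^\star,(1+\eta_0)(-t))$ at time $t$ remains, by finite speed of propagation, at distance at least $\eta_0(-t)$ from $x^\star$ at all later times, hence never enters the backward cone and forces $\int_{D_{-s}(x^\star)}e(u(s))\,dx\le1-\varepsilon_0$, contradicting \eqref{ec6}. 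These last two points are repairable; the missing continuation-by-local-theory argument is the essential defect.
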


The proof goes along the same lines as in \cite{sli moh nad1}. For the convenience of the reader, we sketch it here.
\begin{proof}
\textbf{Claim 1:}  There exists a point  $x^*$ in $\bar\Omega$ such that
for all  $r > 0$, we have
\begin{equation}
\label{ec} \limsup_{t\longrightarrow 0^- }\;\;\int_{\{x;\;|x-x^*|\leq r\}\cap\bar\Omega}\;\;|\nabla u(t)|^2\;\;dx=1.
\end{equation}
Indeed, by contradiction and as in \cite{sli moh nad1}, there exist
two positive real numbers $r$ and $\eta$ such that for any $x\in \bar\Omega$ we have

\begin{eqnarray}
\label{Lim}
\limsup_{t\longrightarrow 0^- }\;\;\int_{\{y;\;|x-y|\leq r\}\cap\bar\Omega}\;\;e(u)(t,y)\;\;dy\leq
1-\eta.
\end{eqnarray}
Now let $x\in\bar\Omega$ and define the cut-off function
$\varphi_x$ by $0\leq\varphi_x\leq1$, $\varphi_x\equiv 1$ in $B(x,r/2)\cap\bar\Omega$ and $\varphi_x\equiv
0$ outside  $B(x,r)\cap\bar\Omega$.
Obviously, from (\ref{Lim}) and Proposition \ref{PC12*}, we have
$$\limsup_{t\longrightarrow 0^- }\;\;\int_{\{y;\;|x-y|\leq r\}\cap\bar\Omega}\;\;  e(\varphi_x u,\varphi_x \partial_t u )(t)
\;\;dy\leq
1-\eta.$$
Now choose a time $t_1>T^*-r/8$ such that
$$
\int_{\{y;\;|x-y|\leq r\}\cap\bar\Omega}
\;\;  e(\varphi_x u,\varphi_x \partial_t u )(t_1)
\;\;dy\leq
1-\eta/2.
$$
From the local theory (Theorem \ref{loc}), one can solve globally in time problem \eqref{C1} with the initial data
$(\varphi_xu(t_1,\cdot),\varphi_x\partial_t u(t_1,\cdot)) $. By the finite speed of propagation, we deduce that
$u$ can be continued in the backward light cone of vertex
$(x,t_1+r/2)$. Since the set $\bar\Omega$ is compact, then we can extract a finite covering from $\bar\Omega=\cup_{x\in\bar\Omega}B(x,r)\cap\bar\Omega$. This implies that $u$ can be continued beyond its lifetime
$T^*$ which is a contradiction.

\textbf{Claim 2:} We have the following
\begin{equation}
\label{ec5} \lim_{t\longrightarrow 0^- }\;\;\dint_{\{x,\;|x-x^*|\leq -t\}\cap\bar\Omega}\;\;|\nabla u(t)|^2\;\;dx=1.
\end{equation}
\begin{equation}
\label{ec6} \forall\;\; t<0,\qquad\int_{\{x,\;|x-x^*|\leq -t\}\cap\bar\Omega}
\;\;e(u(t))\;dx=1.
\end{equation}
Indeed, without loss of generality, we can assume that $x^*=0$. The proof of (\ref{ec5}) is
straightforward. Suppose that (\ref{ec5}) is  false. Then, there
exists a sequence of negative real number $(t_n)$ tending to zero such that
$$
\forall\,\,n\in\N,\quad
\dint_{|x|\leq -t_n}\;\;|\nabla u (t_n)|^2\;\;dx\leq 1-\eta\quad
\hbox{for some} \     0<\eta<1.
$$
Then, arguing as in the proof
of the previous claim,  the solution can be continued beyond $T^*$,  a contradiction.  To prove
(\ref{ec6}), fix $\varepsilon>0$. By (\ref{ec5}), there
exists a time $t_{\varepsilon}<0$ such that $\dint_{|x|\leq
-t}\;\;|\nabla
u(t)|^2\;\;dx\geq 1-\varepsilon$ for $t_{\varepsilon}\leq t<0$.
By the finite speed of propagation,
 we deduce that $$\forall\;\;
t<0,\;\;\;\int_{|x|\leq -t}\;\;e(u)(t)\;dx\geq
1-\varepsilon.$$ Letting $\varepsilon$ go to zero, we obtain the
desired result.

Now, the proof of Proposition \ref{proposition} is immediate. If for a fixed $t<0$, the support property is not satisfied, then there exist $\varepsilon_0>0$ and $\eta_0>0$ such that for all $x_0\in\bar\Omega$, we have

$$
\int_{   \{x,\;|x-x_0|\geq(1+\eta_0)(-t)\}\cap\bar\Omega   }  |\nabla u(x,t)|^2+|\partial_t u(x,t)|^2\;dx\geq\varepsilon_0.
$$
But for $x_0=x^*$, the above inequality together with \eqref{ec6}  contradict the fact that the $E(u,t)=1$.
\end{proof}



\emph{\textbf{Proof of Theorem \ref{cc}.}}\\ Multiplying equation \eqref{C1} by $ 2\p_t u$, we obtain
\begin{equation}\label{eq 1}
\p_t(e(u))-\mbox{div}_x( 2\p_t u \cdot\nabla u)=0,
\end{equation}
where the energy density $e(u)$ is defined by \eqref{e}.\\
Integrating \eqref{eq 1} over the backward truncated cone $K^T_S$
($S<T<0$), we get
\begin{eqnarray}\label{eq12}
\int_{K^T_S}\mbox{div}_{t,x}{\vec B}(t,x)\;dx\;dt=0,
\end{eqnarray}
where
$$
\vec B=(B_0,B_1,B_2),\quad \; B_0=e(u)\;\;\mbox{and}\;
B_j= -2\p_t u\frac{\p u}{\p x_j},\;j=1,2.
$$
Thanks to Stokes formula, we obtain

\begin{eqnarray*}
\int_{D(T)}e(u)(T)\;dx-\int_{D(S)}e(u)(S)\;dx&-&\int_{\{([S,T]\times\p\Omega)\}\cap K_S^T}\nu(x)\cdot(2\p_t u\nabla u)d\sigma\\
&+&\displaystyle\frac{1}{\sqrt{2}}\int_{M_S^T}
\{|\p_tu\frac{x}{|x|}+\nabla u|^2+\frac{e^{4\pi
u^2}-1-4\pi u^2}{4\pi}\}\;d\sigma=0,
\end{eqnarray*}
here $M_S^T$ defined by \eqref{mst} and $\nu(x)$ is the exterior normal vector to $\Omega$ at point
$x$. Taking into account the Dirichlet boundary condition, we have

\begin{eqnarray}\label{es1}
\int_{D(S)}e(u(S))\;dx-\int_{D(T)}e(u(T))\;dx=\int_{M_S^T}\{|\p_tu
\frac{x}{|x|}+\nabla u|^2+\frac{e^{4\pi u^2}-1-4\pi u^2}{4\pi}\}\frac{d\sigma}{\sqrt{2}}.
\end{eqnarray}
Now, multiplying equation \eqref{C1} by $2u$, integrating
over the backward truncated cone $K_S^T$ and using Stokes formula given the Dirichlet condition, we obtain

\begin{eqnarray}\label{B}
\int_{D(T)}\p_tu(T)u(T)\;dx-\int_{D(S)}\p_tu(S)u(S)\;dx
\end{eqnarray}

\begin{eqnarray}
+\displaystyle\frac{1}{\sqrt{2}}\int_{M_S^T}
(\p_tu+\nabla u\cdot\frac{x}{|x|})u\;d\sigma
+\int_{K^T_S}(|\nabla u|^2-|\p_tu|^2+u^2(e^{4\pi
u^2}-1))\;dx\;dt=0.\nonumber
\end{eqnarray}
Thanks to \eqref{cc1}, identity \eqref{es1} implies that
\begin{eqnarray}\label{flux}
\frac{1}{\sqrt{2}}\int_{M_S^T}\{|\p_tu \frac{x}{|x|}+\nabla
u|^2+\frac{e^{4\pi u^2}-1-4\pi u^2}{4\pi}\}\;d\sigma=0.
\end{eqnarray}
Since $u(t)\longrightarrow 0\;$ in $L^2(\Omega)$ and $\|\nabla
u(t)\|_{L^2(\Omega)}\longrightarrow1$ as $t $ goes to 0, the energy
identity \eqref{enr} implies that
\begin{eqnarray}\label{der}
\p_t u(t)\longrightarrow0 \mbox{ in }L^2(\Omega).
\end{eqnarray}
 Letting $T$ go to zero in \eqref{B},
using \eqref{der} and \eqref{flux}, we have
\begin{eqnarray*}
-\int_{D(S)}\p_tu(S)u(S)\;dx +\int_{K^0_S}(|\nabla
u|^2-|\p_tu|^2+u^2(e^{4\pi u^2}-1))\;dx\;dt=0.
\end{eqnarray*}
Multiplying the above identities by $\displaystyle\frac{-1}{S}$,
we deduce that

$$
\int_{D(S)}\p_tu(S)\frac{u(S)}{S}\;dx\leq
\frac{1}{S}\int_{K^0_S}|\nabla
u|^2\;dx\;dt-\frac{1}{S}\int_{K^0_S}|\p_tu|^2\;dx\;dt.
$$
Thanks to the mean value Theorem, there exist $t_0\in]S,0[$ such
that

$$
\frac{1}{S}\int_{K^0_S}|\nabla u|^2\;dx\;dt=-\int_{|x-x^\star|\leq -t_0}|\nabla
u(t_0,x)|^2\;dx.
$$
So, using \eqref{PC11}

$$
\frac{1}{S}\int_{K^0_S}|\nabla u|^2\;dx\;dt\stackrel{S\rightarrow
0^-}{\longrightarrow -1}.
$$
Similarly

$$
\frac{1}{S}\int_{K^0_S}|\p_t u|^2\;dx\;dt\stackrel{S\rightarrow
0^-}{\longrightarrow 0}.
$$
Moreover, since $
\left|\frac{u(S)}{S}\right|=\left|\frac{1}{S}\int_0^S\p_tu(\tau)\;d\tau\right|$, then $(\frac{u(S)}{S})$ is bounded in $L^2(\Omega).$ H\"older inequality combined to the above result imply
$$
\int_{D(S)}\p_tu(S)\frac{u(S)}{S}\;dx\stackrel{S\rightarrow
0^-}{\longrightarrow 0},
$$
leading to $0\leq -1$, a contradiction.
\end{proof}
 \section{Ill-posedness in the supercritical case}
In this section we prove the instability result given by Theorem \ref{ins}. The construction is similar to that one in
Proposition \ref{supercriticalST}. However here, we have to consider the nonlinear problem and not just the linear one. In particular, we will show that the solution to the ODE (the nonlinear wave equation without the diffusion term) is a ``perturbation" of the cosine function. We construct a slightly supercritical initial data given through the same functions $f_k$ as in \eqref{lesfk}. The concentration presented in the data yields fast periodic oscillations in the ODE regime. Moreover, the special form of the data and the finite speed of propagation allow us to conclude that solutions of the P.D.E. and the ODE coincide in a backward light cone.\\

$\bullet$ \textbf{\emph{Step 1: Construction of the initial
data.}}\\ Without loss of generality, we can assume that
$0\in\Omega$. Choose $0<\eta<1$ small enough such that the ball
$B(0,\eta)\subset \Omega$. For $k\geq 1$, let $v_k$ solve

$$
\Box v_k+v_k(e^{4\pi v_k^2}-1)=0,\quad v_k(0,x)=(1+\frac{1}{k})f_k(\frac
x \eta),\quad {\p}_{t}v_k(0,x)=0,\quad v_k|_{\p\Omega}=0
$$
and $w_k$ the solution of
$$
\Box w_k+w_k(e^{4\pi w_k^2}-1)=0,\quad w_k(0,x)=f_k(\frac x \eta),\quad
{\p}_{t}w_k(0,x)=0,\quad w_k|_{\p\Omega}=0.
$$
Since,
\begin{equation}
\|\nabla f_k(\frac \cdot \eta)\|_{L^2(\Omega)}^2=\int_{\eta
e^{-k/2}\leq|x|\leq \eta}\frac{1}{k\pi|x|^2}\;dx
=\frac{2}{k}\int_{\eta e^{-k/2}}^\eta \frac{dr}{r}=1,
\end{equation}
we easily verify that given $\varepsilon>0$, then using Poincar\'e inequality
\begin{eqnarray*}
\|v_k(0)-w_k(0)\|^2_{H_0^1(\Omega)}+\|\p_t v_k(0)-\p_t
w_k(0)\|^2_{L^2(\Omega)}=\frac{1}{k^2}\|f_k(\frac \cdot
\eta)\|^2_{H_0^1(\Omega)}\leq \frac{C}{k^2}\leq \varepsilon,
\end{eqnarray*}
for $k$ large enough. Therefore $w_k$ and $v_k$ satisfy \eqref{ins1}. Now, we will show
that the initial data associated to $v_k$ and $w_k$ are slightly
supercritical.
$$
E(w_k,0)=\|\nabla
f_k(\frac\cdot\eta)\|^2_{L^2(\Omega)}+\frac{1}{4\pi}\int_{\Omega}e^{4\pi
f_k^2(\frac \cdot \eta)}-1-4\pi f_k^2(\frac \cdot \eta)\;dx\leq1+\frac{1}{4\pi}\int_{\Omega}(e^{4\pi
f_k^2(\frac \cdot \eta)}-1)\;dx.
$$
But,
\begin{eqnarray*}
\frac{1}{4\pi}\int_{\Omega}(e^{4\pi f_k^2(\frac \cdot \eta)}-1)\;dx
&=&\frac{1}{4\pi}\left(\int_{\eta e^{-k/2}\leq |x|\leq
\eta}(e^{\frac{4}{k}\log^2(\frac{|x|}{\eta})}-1)\;dx
+\int_{|x|\leq \eta e^{-k/2}}(e^k-1)\;dx\right)\\
&=&\frac{\eta^2}{2}\int_{e^{-k/2}}^1 r(e^{\frac{4}{k}\log^2 r}-1)\;dr+\frac{e^k-1}{2}\int_0^{\eta e^{-k/2}}r\;dr.\\
&=& \frac{\eta^2}{2}\int_{e^{-k/2}}^1 r e^{\frac{4}{k}\log^2 r}\;dr,
\end{eqnarray*}
and to estimate the last integral, we use the following Lemma (see
\cite{sli moh nad2}).

\begin{lem}\label{l1}
For any $a\geq 1$ and $k\in \N$,
\begin{eqnarray*}
I(a,k):=\int_{e^{-k/2}}^1 r e^{\frac{4a^2}{k}\log^2r}\;dr\leq
2e^{(a^2-1)k}.
\end{eqnarray*}
\end{lem}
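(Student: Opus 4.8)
\textit{Proof proposal.} The plan is to reduce the integral to a Gaussian-type integral over a finite interval by the substitution $r=e^{-s}$, and then to exploit the convexity of the resulting exponent together with a simple affine bound. Setting $r=e^{-s}$, so that $r\,dr=e^{-2s}\,ds$ and $\log^2 r=s^2$, one obtains
\[
I(a,k)=\int_0^{k/2}\exp\!\Big(\tfrac{4a^2}{k}s^2-2s\Big)\,ds .
\]
The exponent $g(s)=\tfrac{4a^2}{k}s^2-2s$ is a convex parabola whose minimum is at $s_\star=\tfrac{k}{4a^2}$, and since $a\ge 1$ we have $0<s_\star<\tfrac k2$, so it is natural to split the integral at $s_\star$.

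On $[0,s_\star]$ I would use that $s\le s_\star$ forces $\tfrac{4a^2}{k}s^2\le\tfrac{4a^2}{k}s_\star\,s=s$, hence $g(s)\le -s$ and therefore $\int_0^{s_\star}e^{g(s)}\,ds\le\int_0^\infty e^{-s}\,ds=1$. On $[s_\star,k/2]$ I would substitute $s=\tfrac k2-u$, which gives $g(\tfrac k2-u)=(a^2-1)k+h(u)$ with $h(u)=\tfrac{4a^2}{k}u^2-(4a^2-2)u$ and $u$ ranging over $[0,u_\star]$, where $u_\star=\tfrac k2-s_\star=\tfrac{k(2a^2-1)}{4a^2}$. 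The same device applies: for $u\le u_\star$ one has $\tfrac{4a^2}{k}u^2\le\tfrac{4a^2}{k}u_\star\,u=(2a^2-1)u$, so $h(u)\le-(2a^2-1)u\le -u$ (using $a\ge1$), and thus $\int_{s_\star}^{k/2}e^{g(s)}\,ds\le e^{(a^2-1)k}\int_0^\infty e^{-u}\,du=e^{(a^2-1)k}$. Adding the two pieces and using $e^{(a^2-1)k}\ge1$ yields $I(a,k)\le 1+e^{(a^2-1)k}\le 2e^{(a^2-1)k}$, as claimed.

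The only real obstacle is that the crude estimate ``length of the interval times the maximum of the integrand'' loses a spurious factor of $k$: the integrand attains its maximum $e^{(a^2-1)k}$ at the endpoint $s=k/2$, but the interval has length $k/2$. The content of the argument above is that on each of the two subintervals $[0,s_\star]$ and $[s_\star,k/2]$ the exponent is dominated by an affine function of the running variable with slope at most $-1$, so each contribution is bounded by $\int_0^\infty e^{-s}\,ds$, an absolute constant independent of $k$ and $a$.
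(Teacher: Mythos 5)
Your proof is correct: the substitution $r=e^{-s}$ is computed correctly ($r\,dr=-e^{-2s}\,ds$, endpoints $s=0$ and $s=k/2$), the splitting point $s_\star=\tfrac{k}{4a^2}$ does lie in $(0,k/2)$ for $a\ge1$, and on each piece the bound $\tfrac{4a^2}{k}s^2\le \tfrac{4a^2}{k}s_\star s$ (resp.\ $\le\tfrac{4a^2}{k}u_\star u$) is exactly the chord inequality for the convex function $s\mapsto s^2$ through $0$ and the relevant endpoint, so the exponent is indeed majorized by $-s$ (resp.\ by $(a^2-1)k-(2a^2-1)u\le (a^2-1)k-u$). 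Each piece is then at most $\int_0^\infty e^{-t}\,dt=1$ times the appropriate prefactor, and $1+e^{(a^2-1)k}\le 2e^{(a^2-1)k}$ since $a\ge1$. Note that the paper does not prove this lemma at all: it is imported from the reference \cite{sli moh nad2} with no argument given, so there is nothing in the text to compare against. Your argument is a clean, self-contained substitute, and your closing remark correctly identifies why the naive bound (maximum of the integrand times the length $k/2$ of the interval) is off by a factor of $k$ and how the affine domination of the exponent with slope $\le -1$ removes it.
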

Applying the above Lemma with $a=1$, we get
$$
\frac{\eta^2}{2}\int_{e^{-k/2}}^1 r e^{\frac{4}{k}\log^2 r}\;dr\leq
\eta^2.
$$
Hence,  for $k$ large enough, $E(w_k,0)\leq 1+\eta^2.$ \\
Similarly, we prove that $0<E(v_k,0)-1\leq
C\eta^2e^{2+\frac{1}{k}}$. Therefore, for $k$ large enough
$$
0<E(v_k,0)-1\leq 3\eta^2e^{3}.
$$

$\bullet$\textbf{ \emph{Step 2: Approximation}.}\\
Let $\phi_k$ and $\psi_k$ be the two solutions of the following
ordinary differential equation (O.D.E.)

\begin{eqnarray}
\ddot{y}+y(e^{4\pi y^2}-1)=0,
\end{eqnarray}
with initial data

$$
\phi_k(0)=(1+\frac{1}{k})\sqrt{\frac{k}{4\pi}}\quad,\quad
\dot{\phi_k}(0)=0,
$$
and

$$
\psi_k(0)=\sqrt{\frac{k}{4\pi}}\quad,\quad \dot{\psi_k}(0)=0.
$$
Since $v_k=\phi_k$ and $w_k=\psi_k$ on the ball
$B=\{(x,t=0)\;:\;|x|\leq \eta e^{-k/2}\}$ in the hyperplane $t=0$,
then by finite speed of propagation $v_k=\phi_k$ and $w_k=\psi_k$ in
the backward light cone
$$
K=\{(x,t)\;/\;t=\alpha \eta e^{-k/2}\quad |x|\leq (1-\alpha)\eta
e^{-k/2}\;;0\leq\alpha\leq 1\}.
$$
$\bullet$\textbf{\emph{Step 3: Decoherence.}}\\\\
We start by recalling the following result (for example, see section III.5 from \cite{AL}).
\begin{lem}
Let $F :\R\longrightarrow \R$ be a regular function and consider the
following O.D.E.
\begin{equation}\label{T1}
\ddot{Y}(t)+ F'(Y(t))=0\quad,\quad (Y(0),\dot{Y}(0))=(Y_0,0)
\end{equation}
where $Y_0>0$. Then equation \eqref{T1} has a periodic non constant
solution if and only if the function $G : z\longmapsto
2(F(Y_0)-F(z))$ has two simples distinct zeros $\alpha$ and $\beta$
with $\alpha\leq Y_0\leq \beta$ and such that $G$ has no zero in the
interval $]\alpha,\beta[$. In this case, the period is given by
\begin{equation}\label{T2}
T=2\int_\alpha^\beta\frac{dz}{\sqrt{G(z)}}\;=2\int_\alpha^\beta\frac{dz}{\sqrt{2(F(Y_0)-F(z))}}.
\end{equation}
\end{lem}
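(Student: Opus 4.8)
The plan is to run the classical phase-plane (energy) argument for the autonomous, time-reversible second order equation \eqref{T1}. First I would multiply \eqref{T1} by $\dot Y$ and integrate in $t$ to obtain the conserved energy $\tfrac12\dot Y(t)^2+F(Y(t))=\tfrac12\dot Y(0)^2+F(Y_0)=F(Y_0)$, so that along every solution $\dot Y(t)^2=2(F(Y_0)-F(Y(t)))=G(Y(t))$. Thus the trajectory $t\mapsto(Y(t),\dot Y(t))$ stays on the level set $\{(y,v):v^2=G(y)\}$, which is symmetric about the $y$-axis and meets it exactly at the zeros of $G$. Note that $G(Y_0)=0$ automatically, and since $\dot Y(0)=0$ the point $Y_0$ is a turning point; if moreover $F'(Y_0)=0$ then $(Y_0,0)$ is an equilibrium, so by uniqueness for the Cauchy problem (here $F$ is regular, hence $F'$ locally Lipschitz) the only solution is the constant $Y\equiv Y_0$, which is the excluded degenerate case.

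For the \emph{sufficiency} direction, assume $G$ has two distinct simple zeros $\alpha,\beta$ with $\alpha\le Y_0\le\beta$ and no zero in $(\alpha,\beta)$. Since $G(Y_0)=0$ we must have $Y_0\in\{\alpha,\beta\}$; say $Y_0=\beta$ (the case $Y_0=\alpha$ follows by the change of variable $t\mapsto-t$), and then $G>0$ on $(\alpha,\beta)$ and $G'(\beta)<0$, so $\ddot Y(0)=-F'(\beta)=\tfrac12 G'(\beta)<0$. Hence $Y$ decreases initially and satisfies $\dot Y=-\sqrt{G(Y)}$ as long as $\alpha<Y<\beta$; separating variables gives $t=\int_{Y(t)}^{\beta}\frac{dz}{\sqrt{G(z)}}$. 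Because $\alpha$ and $\beta$ are \emph{simple} zeros, $G$ vanishes only linearly there, so $1/\sqrt{G(z)}$ is integrable near both endpoints and $\tau:=\int_{\alpha}^{\beta}\frac{dz}{\sqrt{G(z)}}$ is finite; at time $\tau$ one reaches $Y(\tau)=\alpha$, $\dot Y(\tau)=0$. Now reversibility enters: $Z(t):=Y(2\tau-t)$ also solves \eqref{T1} and has the same data at $t=\tau$ as $Y$, so $Z\equiv Y$ by uniqueness, whence $Y(2\tau)=Y(0)=\beta$ and $\dot Y(2\tau)=0$; a further application of uniqueness gives $Y(t+2\tau)=Y(t)$ for all $t$. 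As $Y$ is strictly monotone between the distinct turning points $\alpha$ and $\beta$, the minimal period is exactly $T=2\tau=2\int_{\alpha}^{\beta}\frac{dz}{\sqrt{G(z)}}$, and $Y$ is non-constant, which is \eqref{T2}.

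For the \emph{necessity} direction, suppose \eqref{T1} has a non-constant periodic solution $Y$. Energy conservation again gives $\dot Y^2=G(Y)\ge0$, so $Y$ is bounded and its range is a compact interval $[\alpha,\beta]$ with $\alpha<\beta$; at times where $Y$ attains $\alpha$ or $\beta$ one has $\dot Y=0$ and hence $G(\alpha)=G(\beta)=0$. Since the full range $[\alpha,\beta]$ contains values on both sides of any interior point, the reasoning of the first paragraph shows $Y_0\notin(\alpha,\beta)$ — otherwise $G$ would have a local minimum at $Y_0$, forcing $G'(Y_0)=F'(Y_0)=0$ and $Y\equiv Y_0$ — so $Y_0\in\{\alpha,\beta\}$. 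If $G$ vanished at some $c\in(\alpha,\beta)$, then either $F'(c)=0$, making $(c,0)$ an equilibrium lying on the (non-constant) orbit, a contradiction with uniqueness, or $F'(c)\ne0$, in which case $c$ is a simple zero and $G$ changes sign at $c$, so the orbit cannot take $Y$-values on both sides of $c$, contradicting that $[\alpha,\beta]$ is its range; thus $G>0$ on $(\alpha,\beta)$. Finally $\alpha,\beta$ are simple zeros: a double zero, say at $\beta$, would give $F'(\beta)=0$, so $(\beta,0)$ is an equilibrium reached by the orbit in finite time, again forcing $Y$ constant. The period is then computed by separation of variables along one monotone arc from $\beta$ to $\alpha$ exactly as above, yielding \eqref{T2}.

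The step I expect to be the main obstacle is the careful treatment of the turning points: one has to show that a simple zero of $G$ produces a genuine reflection of the trajectory in finite time (so that $1/\sqrt{G}$ is integrable and the period formula makes sense), whereas a double zero corresponds to an equilibrium that a periodic orbit can reach only if it is constant. Once these local statements and the convergence of the improper integral are pinned down, gluing the monotone arcs into a closed orbit via time-reversibility and uniqueness of the Cauchy problem is routine.
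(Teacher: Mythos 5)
The paper does not actually prove this lemma: it is quoted as a classical fact from Section III.5 of Arnaudi\`es--Lelong-Ferrand \cite{AL}, so there is no in-paper argument to compare yours against. Your phase-plane proof is the standard one and is essentially complete and correct: conservation of $\tfrac12\dot Y^2+F(Y)$ confines the orbit to the level set $\dot Y^2=G(Y)$; simplicity of the zeros gives integrability of $G^{-1/2}$ at the turning points and hence a finite transit time from $\beta$ to $\alpha$; time-reversibility plus uniqueness glue the two monotone arcs into a closed orbit of period $2\int_\alpha^\beta G^{-1/2}\,dz$; and in the converse direction the interior-zero and double-zero cases are correctly excluded by the sign-change and equilibrium arguments. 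The one clause to flag is ``and then $G>0$ on $(\alpha,\beta)$'': this is not a consequence of the hypotheses as transcribed. Two simple zeros with none in between only force $G$ to keep a constant sign on $]\alpha,\beta[$, and that sign could be negative (e.g. $G(z)=(z-1)(z-2)$ with $Y_0=2$), in which case the trajectory leaves $[\alpha,\beta]$ outward and need not be periodic, so the ``if'' direction as literally stated would fail. The positivity of $G$ between the turning points must be read as part of the hypothesis (in the source one assumes $F(z)<F(Y_0)$ on the open interval); this is a defect of the transcribed statement rather than of your argument, and it is harmless in the paper's application, where $F(z)=(e^{4\pi z^2}-1-4\pi z^2)/(8\pi)$ is even and strictly increasing on $[0,\infty)$, so that $G>0$ precisely on $]-Y_0,Y_0[$ with simple zeros at $\pm Y_0$. (A minor side remark: reducing the case $Y_0=\alpha$ to $Y_0=\beta$ is not achieved by $t\mapsto -t$, which fixes the solution since $\dot Y(0)=0$; one simply reruns the same argument with $\dot Y=+\sqrt{G(Y)}$ on the first arc.)
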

Taking $F(z)=\frac{e^{4\pi z^2}-1-4\pi z^2}{8\pi}$ in the above Lemma, the
solution $\phi_k$ is periodic and we have
\begin{eqnarray*}
T_k&=&2\int_{-(1+\frac 1 k)\sqrt\frac{k}{4\pi}}^{(1+\frac 1 k)\sqrt\frac{k}{4\pi}}\frac{dz}{\sqrt{2[(e^{(1+\frac 1 k)^2k}-(1+\frac 1 k)^2k)-(e^{4\pi z^2}-4\pi z^2)]}}\\
&=&4\int_0^{(1+\frac 1 k)\sqrt k} \frac{du}{\sqrt{(e^{(1+\frac 1 k)^2k}-(1+\frac 1 k)^2k)-(e^{u^2}-u^2)}}.
\end{eqnarray*}
Now to estimate the period $T_k$ we use the following Lemma.
\begin{lem}\label{T3}
For any $A>1$, we have
$$
\int_0^A\frac{du}{\sqrt{(e^{A^2}-A^2)-(e^{u^2}-u^2)}}\leq \sqrt{1-2e^{-1}}e^{\frac{-A^2}{2}}[A-\frac{1}{A}+\frac{A}{A^2-1}].
$$

\end{lem}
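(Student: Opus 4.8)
The strategy is to bound the integrand by an explicit elementary one after a sharp pointwise estimate on the quantity under the square root. Set \(g(u):=e^{u^{2}}-u^{2}\). Then \(g'(u)=2u(e^{u^{2}}-1)\ge 0\) and \(g''(u)=2(e^{u^{2}}-1)+4u^{2}e^{u^{2}}\ge 0\) on \([0,\infty)\), so \(g\) is increasing and convex there; consequently \(D(u):=g(A)-g(u)>0\) for \(0\le u<A\), and since \(D(u)\sim g'(A)(A-u)\) as \(u\to A^{-}\), the integrand \(D(u)^{-1/2}\) has only an integrable \((A-u)^{-1/2}\)-type singularity and the integral is finite. Everything is quantitative, so the point is to make the lower bound on \(D\) effective.

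The heart of the proof is the pointwise inequality
\[
D(u)\ \ge\ (1-2e^{-1})\,e^{A^{2}}\,\frac{A^{2}-u^{2}}{A^{2}},\qquad 0\le u\le A,\ A\ge 1 .
\]
To prove it I would put \(\psi(u)=D(u)-(1-2e^{-1})e^{A^{2}}(A^{2}-u^{2})/A^{2}\) and show \(\psi\ge 0\) on \([0,A]\). One checks directly that \(\psi(A)=0\); that \(\psi(0)=\tfrac{2}{e}e^{A^{2}}-1-A^{2}\ge 0\), which is just the elementary inequality \(xe^{-x}\le e^{-1}\) (equivalently \(\tfrac{2}{e}e^{x}\ge 2x\ge 1+x\) for \(x\ge 1\)) evaluated at \(x=A^{2}\); and that \(\psi'(u)=2u\bigl(1+(1-2e^{-1})e^{A^{2}}/A^{2}-e^{u^{2}}\bigr)\). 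The bracket is strictly decreasing, is positive at \(u=0\), and is negative at \(u=A\) because \(e^{A^{2}}\bigl(1-(1-2e^{-1})/A^{2}\bigr)\ge \tfrac{2}{e}e^{A^{2}}\ge 2>1\); hence \(\psi'\) changes sign exactly once on \((0,A)\), so \(\psi\) first increases then decreases and therefore \(\psi\ge\min\{\psi(0),\psi(A)\}\ge 0\).

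Granting this, \(\displaystyle\int_{0}^{A}\frac{du}{\sqrt{D(u)}}\le\frac{A\,e^{-A^{2}/2}}{\sqrt{1-2e^{-1}}}\int_{0}^{A}\frac{du}{\sqrt{A^{2}-u^{2}}}=\frac{\pi}{2}\cdot\frac{A\,e^{-A^{2}/2}}{\sqrt{1-2e^{-1}}}\), an estimate already of the required shape. To replace the factor \(\tfrac{\pi}{2}A\) by the sharper \(A-\tfrac1A+\tfrac{A}{A^{2}-1}\), I would split \([0,A]=[0,A-\tfrac1A]\cup[A-\tfrac1A,A]\). On \([0,A-\tfrac1A]\), since \((A-\tfrac1A)^{2}\le A^{2}-1\) one has \(e^{u^{2}}\le e^{A^{2}-1}\) together with \(A^{2}-u^{2}\le A^{2}\le e^{A^{2}-1}\), hence the cleaner constant bound \(D(u)\ge (1-2e^{-1})e^{A^{2}}\), giving a contribution \(\le (A-\tfrac1A)e^{-A^{2}/2}/\sqrt{1-2e^{-1}}\). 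On the thin layer \([A-\tfrac1A,A]\) one keeps instead the tangent/convexity bound \(D(u)=\int_{u}^{A}g'(s)\,ds\ge g'(u)(A-u)\) with \(g'(u)=2u(e^{u^{2}}-1)\) comparable to its large endpoint value; the remaining \((A-u)^{-1/2}\)-integral is then elementary, and after bounding its prefactor from below by a multiple of \(e^{A^{2}}\) (using \((A-\tfrac1A)^{2}\ge A^{2}-2\) and \(e^{A^{2}}-1\ge(1-e^{-1})e^{A^{2}}\) since \(A>1\)) it contributes at most \(\tfrac{A}{A^{2}-1}e^{-A^{2}/2}/\sqrt{1-2e^{-1}}\). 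Summing the two pieces yields the claim.

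The step I expect to be the main obstacle is exactly the treatment of the endpoint layer \([A-\tfrac1A,A]\): the lower bound for \(D\) there must simultaneously be strong enough near \(u=A\) to absorb the \((A-u)^{-1/2}\)-singularity and carry a prefactor of order \(e^{A^{2}}\), and the precise cut \(A-\tfrac1A\) is what makes the bulk and the layer estimates fit together and reproduce the stated coefficients. One must also keep everything uniform as \(A\downarrow 1\), where the ``layer'' is no longer thin — though there the right-hand side is unbounded, so cruder bounds suffice. Lemma~\ref{l1} is of the same flavour and its proof can serve as a template.
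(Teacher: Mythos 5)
Your strategy coincides with the paper's at the structural level: both split the integral at $A-\frac1A$, both bound the bulk piece by showing $D(u):=(e^{A^2}-A^2)-(e^{u^2}-u^2)\ge(1-2e^{-1})e^{A^2}$ there (your route via $u^2\le A^2-1$ and $A^2\le e^{A^2-1}$ is exactly the intended one), and both treat the endpoint layer separately. The only real divergence is in the layer: the paper integrates by parts against $h(u)=\bigl(u(e^{u^2}-1)\bigr)^{-1}$, reducing that piece to the single boundary term $h(A-\frac1A)\sqrt{D(A-\frac1A)}$, whereas you freeze the increasing function $g'(u)=2u(e^{u^2}-1)$ at the left endpoint of the layer and integrate the resulting $(A-u)^{-1/2}$ explicitly. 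Your global pointwise bound $D(u)\ge(1-2e^{-1})e^{A^2}(A^2-u^2)/A^2$ is correct and is a nice self-contained warm-up, though it becomes redundant once you split.

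Two quantitative points keep your chain of inequalities from closing as written. First, both of your pieces carry the constant $(1-2e^{-1})^{-1/2}$, so ``summing the two pieces'' yields the right-hand side with $\sqrt{1-2e^{-1}}$ replaced by its reciprocal; these differ by a factor of about $3.8$. In fact the inequality as printed is false: since $e^{u^2}-u^2\ge1$ one has $D(u)\le e^{A^2}$, hence $\int_0^A D^{-1/2}\,du\ge A\,e^{-A^2/2}$, which exceeds $\sqrt{1-2e^{-1}}\,e^{-A^2/2}\bigl[A-\frac1A+\frac{A}{A^2-1}\bigr]$ for all $A\gtrsim1.3$. The constant must be read as $(1-2e^{-1})^{-1/2}$ (the paper's own computation produces the reciprocal too, and the application only uses the order $\sqrt{k}\,e^{-k/2}$ of $T_k$); you should flag this rather than assert that the claim follows. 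Second, your layer estimate does not deliver the coefficient $\frac{A}{A^2-1}$: freezing $g'$ at $A-\frac1A$ gives $2\bigl(A\,g'(A-\frac1A)\bigr)^{-1/2}$, which for large $A$ is about $\sqrt2\,e\,A^{-1}e^{-A^2/2}$ (the drop from $e^{A^2}$ to $e^{(A-1/A)^2}\approx e^{-2}e^{A^2}$ costs a factor $e$ after the square root), roughly twice the target $\frac{A}{A^2-1}(1-2e^{-1})^{-1/2}e^{-A^2/2}$, and as $A\downarrow1$ it blows up like $(A^2-1)^{-3/2}$ while the target only allows $(A^2-1)^{-1}$. Near $A=1$ you can instead reuse your pointwise bound, since $\int_{A-1/A}^A(A^2-u^2)^{-1/2}\,du\le\pi/2$ stays bounded while the target diverges; for large $A$ you must keep $g'(u)$ inside the integral (or adopt the paper's integration by parts) to recover the stated coefficient. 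None of this matters for the estimate of $T_k$, but as written the layer step is a genuine gap.
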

\begin{proof} Write,
$$
\int_0^A\frac{du}{\sqrt{(e^{A^2}-A^2)-(e^{u^2}-u^2)}}=\int_0^{A-\frac{1}{A}}+\int_{A-\frac{1}{A}}^A
$$
The first term on the right-hand side can be estimated by
$\sqrt{1-2e^{-1}}(A-\frac{1}{A})e^{\frac{-A^2}{2}}.$ Let
$h(u)=\frac{1}{u(e^{u^2}-1)}$ and
$g'(u)=\frac{u(e^{u^2}-1)}{\sqrt{(e^{A^2}-A^2)-(e^{u^2}-u^2)}}.$ Integrating by
parts in the second integral, we obtain

\begin{eqnarray}\label{ped}
\int_{A-\frac{1}{A}}^A\frac{du}{\sqrt{(e^{A^2}-A^2)-(e^{u^2}-u^2)}}&\leq&
\frac{A}{A^2-1}\sqrt{e^{A^2}-e^{A^2-2-\frac1{A^2}}-2+\frac1{A^2}}\\
&\leq&\sqrt{1-2e^{-1}}\frac{A}{A^2-1}e^{\frac{-A^2}{2}},
\nonumber
\end{eqnarray}
giving,

$$
\int_0^A\frac{du}{\sqrt{(e^{A^2}-A^2)-(e^{u^2}-u^2)}}\leq \sqrt{1-2e^{-1}}e^{\frac{-A^2}{2}}[A-\frac{1}{A}+\frac{A}{A^2-1}]
$$
as desired.
\end{proof}

Choosing $A=\sqrt{k}(1+\frac{1}{k})$ in the above Lemma \ref{T3} with $k$ large enough, we get

\begin{eqnarray*}
T_k&\leq&
\sqrt{k}e^{\frac{-k}{2}(1+\frac{1}{k})^2}4e^2\left[\frac{(k+1)^2-k}{k(k+1)}+\frac{(k+1)}{(k+1)^2-k}\right]\\
&\leq&C_1\sqrt{k}e^{\frac{-k}{2}(1+\frac{1}{k})^2}.
\end{eqnarray*}
Since $\phi_k$ is a periodic function and decreasing on $]0,T_k/4[$
(actually, $\phi_k$ may be viewed as a cosine function) then, we choose
$t_k\in]0,T_k/4[$ such that
$$
\phi_k(t_k)=(1+\frac{1}{k})\sqrt{\frac{k}{4\pi}}-\left((1+\frac{1}{k})\sqrt{\frac{k}{4\pi}}\right)^{-1}.
$$
Clearly,

$$
t_k=\int_{\sqrt k+\frac{1}{\sqrt k}-\frac{4\pi\sqrt k}{k+1}}^{\sqrt
k+\frac{1}{\sqrt
k}}\frac{du}{\sqrt{(e^{(1+\frac 1 k)^2k}-(1+\frac 1 k)^2k)-(e^{u^2}-u^2)}}.
$$
Using \eqref{ped} with $A=\sqrt{k}+\frac{1}{\sqrt{k}}$, we obtain

\begin{eqnarray*}
t_k&\leq&e^{8\pi}\frac{k(k+1)}{\sqrt{k}(k^2+(2-4\pi)k+1)}e^{-\frac{1}{2}(\sqrt{k}+\frac{1}{\sqrt
k})^2}
\\
&\leq&e^{8\pi}\frac{e^{-k/2}}{\sqrt k}\frac{k(k+1)}{(k^2+(2-4\pi)k+1)}.\\
\end{eqnarray*}
Then, if $k$ is large enough
$$
t_k\leq \frac \eta 2 e^{-k/2}.
$$
Finally, we will prove that this time $t_k$ is sufficient to establish the instability result.\\
Since,
\begin{eqnarray*}
\|\p_t(v_k-w_k)(t_k)\|^2_{L^2(\Omega)}&=&\int_{\Omega}|\p_t(v_k-w_k)(t_k)|^2\;dx\\
&\geq&
\int_{|x|<\frac{\eta}{2}e^{-k/2}}|\p_t(v_k-w_k)(t_k)|^2\;dx=\frac{\pi}{4}\eta^2e^{-k}|\p_t(\phi_k-\psi_k)(t_k)|^2.
\end{eqnarray*}
Then, it suffices to estimate $|\p_t(\phi_k-\psi_k)(t_k)|.$ To do
so, we can write
$$
|\p_t(\phi_k-\psi_k)(t_k)|=\displaystyle\frac{|(\p_t\phi_k(t_k))^2-(\p_t\psi_k(t_k))^2|}{|\p_t\phi_k(t_k)+\p_t\psi_k(t_k)|},
$$
with
\begin{equation}
\p_t\phi_k(t_k)^2=\frac{e^{4\pi \phi_k(0)^2}-4\pi \phi_k(0)^2-e^{4\pi
\phi_k(t_k)^2}+4\pi \phi_k(t_k)^2       }{4\pi}
\end{equation}
and similarly
\begin{equation}
\p_t\psi_k(t_k)^2=\frac{e^{4\pi \psi_k(0)^2}-4\pi \psi_k(0)^2-e^{4\pi
\psi_k(t_k)^2}+4\pi \psi_k(t_k)^2}{4\pi}.
\end{equation}
Hence,
\begin{eqnarray*}
|\p_t\phi_k(t_k))^2-(\p_t\psi_k(t_k))^2|
=\left|\frac{e^{4\pi
\phi_k(0)^2}-e^{4\pi \phi_k(t_k)^2}-e^{4\pi \psi_k(0)^2}+e^{4\pi
\psi_k(t_k)^2}}{4\pi}\right|.
\end{eqnarray*}
Using the fact that $\psi_k$ is decreasing on $[0,T_k/4]$, we have

$$
|e^{4\pi \psi_k(0)^2}-4\pi \psi_k(0)^2-e^{4\pi
\psi_k(t_k)^2}+4\pi \psi_k(t_k)^2|
\leq 2e^k.
$$
In addition,
$$
e^{4\pi\phi_k(0)^2}-e^{4\pi \phi_k(t_k)^2}=e^{k+\frac 1 k+2}-e^{2-8\pi+k+\frac 1 k +\frac {16\pi^2 k}{(k+1)^2}}.
$$
Therefore for $k$ large enough,

\begin{eqnarray*}
|(\p_t\phi_k(t_k))^2-(\p_t\psi_k(t_k))^2|
\geq Ce^{k}.
\end{eqnarray*}
Moreover,
\begin{eqnarray*}
|\p_t\phi_k(t_k)+\p_t\psi_k(t_k)|&\leq&\frac{e^{2\pi\phi_k(0)^2}+e^{2\pi\psi_k(0)^2}}{\sqrt{4\pi}}\\
&\leq&\frac{e^{k/2}+e^{\frac{k}{2}(1+\frac{1}{k})^2}}{\sqrt{4\pi}}\\
&\leq&\frac{e^{k/2}(e^{1+\frac{1}{k^2}+\frac 2 k}+1)}{\sqrt{4\pi}}.
\end{eqnarray*}
For large  $k$, we have
$$
|\p_t\phi_k(t_k)+\p_t\psi_k(t_k)|\leq Ce^{k/2},
$$
and consequently,
$$
|\p_t(\phi_k-\psi_k)(t_k)|^2\geq C e^k.
$$
Finally, we obtain
\begin{equation}
\liminf_{k\rightarrow\infty}\|\p_t(v_k-w_k)(t_k)\|^2_{L^2(\Omega)}\geq\frac
\pi 4 C\eta^2.
\end{equation}

This finishes the proof of Theorem \ref{ins}.
\endproof

\setcounter{equation}{0} \setcounter{equation}{0}

\end{document}